\numberwithin{equation}{section}
\newcommand{\ud}{\,\mathrm{d}}
\newcommand{\udiv}{\, \mathrm{div}}
\newcommand{\eps}{\varepsilon}
\newcommand{\R}{{\mathbb{R}}}
\newcommand{\ind}{{\mathbb{I}}}
\newcommand{\T}{\Pi}
\theoremstyle{plain}
\newtheorem{Prop} {Proposition} 
\newtheorem{lemma} {Lemma}
\newtheorem{theorem}{Theorem}
\newtheorem{definition}{Definition}
\theoremstyle{remark} \newtheorem*{remark}{Remark}                                                    %\theoremstyle{remark}\newtheorem*{example}{Remark}
 \numberwithin{theorem}{section}
 \numberwithin{Prop}{section}
 \numberwithin{corollary}{section}
\numberwithin{lemma}{section}
\numberwithin{definition}{section}
\title[Mean Field limits]{Mean Field Limit and Quantitative Estimates with singular attractive kernels }
\author[D. Bresch]{Didier Bresch}
\address{Laboratoire de Math\'ematiques, CNRS UMR 5127,
Universit\'e  Savoie Mont-Blanc, 73376 Le Bour\-get-du-Lac, France; 
e-mail: Didier.Bresch@univ-smb.fr}
\author[P.-E. Jabin]{Pierre-Emmanuel Jabin}
\address{Pennsylvania State University, Department of Mathematics and
Huck Institutes, State College, PA 16802, USA; 
e-mail: pejabin@psu.edu}
\author[Z. Wang]{Zhenfu Wang} 
\address{Beijing International Center for Mathematical Research, Peking University, Beijing, China, 100871; 
e-mail:  zwang@bicmr.pku.edu.cn}
\date{\today}
\begin{document}

\begin{abstract} This paper proves the mean field limit and quantitative estimates for many-particle systems with singular attractive interactions between particles.   As an important example,  a full rigorous derivation (with quantitative estimates) of the  Patlak-Keller-Segel model  in optimal subcritical regimes is obtained for the first time.  To give an answer to this longstanding problem, we take advantage of a new modulated free energy and we prove some precise large deviation estimates encoding the competition between diffusion and attraction. Combined with the range of repulsive kernels, already treated  in the s\'eminaire Laurent Schwartz proceeding  [https://slsedp.centre-mersenne.org/journals/SLSEDP/ ], we provide the full proof of results announced by the authors in [{\it C.~R. Acad. Sciences}, \rm  Section Maths, (2019)].
\end{abstract} 

\maketitle

\tableofcontents  

\section{Introduction}
\label{sec2}
\noindent
The present paper coupled with the proceeding [S\'eminaire Laurent Schwartz, EDP et Applications, ann\'ee 2019-2020, Expos\'e no II] published on website: https://slsedp.centre-mersenne.org/journals/SLSEDP/   corresponds to the extended version ({\it i.e.} with detailed proofs) of the announced results in the note {\it C.R. Acad. Sciences} \cite{BrJaWa}.

Using a new weighted related entropy, we are able to derive for the first time the mean field limit for many-particle systems with singular attractive interactions of gradient-flow type. In particular we can positively answer the long standing open question of the mean field limit to the Patlak-Keller-Segel system. 

More precisely, we consider the mean field limit for stochastic many-particle systems of the type
\begin{equation}\label{sys}
d X_i = \frac{1}{N} \sum_{j\not = i} K(X_i-X_j)  dt+  \sqrt{2\sigma}  d B_i, \quad i=1, 2, \cdots, N, 
\end{equation}
where the $B_i$ are independent Brownian Motions or Wiener processes. For simplicity, we limit ourselves to the periodic domain $\T^d$.

We specifically focus on gradient flows with 
  interaction kernels given by
 \begin{equation}\label{kernel}
 K= -\nabla V
 \end{equation}
 with general singular and {\em attractive} interaction potentials $V$.   

A guiding example in this article (and the corresponding note \cite{BrJaWa}) is the {\em attractive} Poisson potential in dimension $2$
 \begin{equation}
   V=\lambda\,\log |x|+V_e(x),\label{logpotential}
 \end{equation}
 with $\lambda>0$ and where $V_e$ is a smooth correction so that $V$ is periodic. Logarithmic potentials still play a critical role if the dimension $d>2$ and for this reason we will still consider potentials like \eqref{logpotential} in any dimension, even if there is no connection with the Poisson equation anymore.

Our main goal is to provide precise quantitative estimates 
for the convergence of \eqref{sys} towards the limit McKean-Vlasov PDE 
\begin{equation}
  \begin{split}
& \partial_t \bar \rho + \udiv_x \left( \bar  \rho\,  K \star_x \bar \rho \right) = \sigma \Delta_x \bar \rho,   \\ 
&\hbox{ with } \quad K = - \nabla V,\quad \bar\rho(t=0,x)=\bar\rho^0\in {\mathcal P}(\T^d).
\end{split} \label{MFlimit}
\end{equation}

In the case where $V$ is given by \eqref{logpotential} and $d=2$, \eqref{MFlimit} is the famous Patlak-Keller-Segel model, which is one of the first models of chemotaxis for micro-organisms. The potential $-V \star \bar \rho $ can then be seen as the concentration of some chemical  species (one has typically $V\leq 0$ here): From \eqref{logpotential}, one has that $\Delta V-V=2\pi\,\lambda\, \delta_0$ so that the chemical species are produced by the population. Moreover \eqref{MFlimit} implies that the population follows the direction of higher chemical concentrations (more negative values of $V$).

\smallskip

It should be noted that the system~\eqref{MFlimit} offers only a rough modeling of the biological processes involved in chemotaxis. For realistic applications, it is hence critical to be able to handle a {\em wide range} of potentials $V$ that may still share a singularity comparable to the one in \eqref{logpotential}. In that sense, the Patlak-Keller-Segel model is a good example of a typical setting where a singular attractive dynamics (all micro-organisms try to concentrate on a point) is competing with the spreading effect due to diffusion.

There exists a further advantage of considering \eqref{logpotential}: Since \eqref{MFlimit} has a simple structure, it is possible to fully characterize its behavior. Eq. \eqref{MFlimit} may indeed blow-up and form a Dirac mass in finite time and, one  may exactly characterize that such a blow-up occurs simply by comparing $\lambda$ and $\sigma$
\begin{itemize}
\item If $\lambda\leq 2\,d\,\sigma$ with $d$ the dimension, then we always have a global solution to \eqref{MFlimit};
  \item If $\lambda>2\,d\,\sigma$ then all solutions blow-up in final time (though it may be possible to extend the existence of some notion of solution past some blow-up as in \cite{BedMas}).
  \end{itemize}
We refer for instance to \cite{BlDolPer,DoSe,DoPe} and the references therein. We note here that in our case $\bar \rho^0$ is normalized to be a probability density with total mass $1$. The PDE literature typically instead normalizes $V$ to be the Green kernel of the Poisson equation, so that the result above exactly corresponds to the classical $8\,\pi\,\sigma$ critical mass.

A key consequence of our main result is that $V\sim \log |x|$ is always critical for the mean field limit. As we will see in our main result, Theorem~\ref{Main}, we are able to prove the limit for essentially all $V\geq \gamma\,\log |x|$ for some $\gamma<2\,d\,\sigma$ (with some reasonable assumptions on $\nabla V$, see \eqref{Vlp}-\eqref{nablaV} below). This justifies the central role played by the Patlak-Keller-Segel case but it is remarkable that the exact same condition is found for the mean field limit as for the blow-up of the PDE system.

\smallskip

There are several ways to quantitatively compare \eqref{sys} with the limit $\bar\rho$ given by \eqref{MFlimit}, which one can very roughly separate into trajectorial and statistical approaches.  We follow here \cite{JaWa1,JaWa2} by using the joint law $\rho_N(t,x_1,\ldots,x_N)$ of the process $(X_1,\ldots,X_N)$ which solves the Liouville or forward Kolmogorov equation 
\begin{equation}
  \begin{split}
 &   \partial_t \rho_N
  + \sum_{i=1}^N {\rm div}_{x_i} \Bigl(\rho_N \frac{1}{N} \sum_{j \ne i }^N K(x_i-x_j)\Bigr) 
  = \sigma \sum_{i=1}^N \Delta_{x_i} \rho_N,
  \\
 & \rho_N\vert_{t=0} = \rho_N^0.
  \end{split} \label{liouvilleN}
\end{equation}
Eq. \eqref{liouvilleN} contains all the relevant statistical information about the position of the particles at any time. But it may fail to include some information on the trajectories: For example, it is not in general possible to identify time correlations in a given particle trajectory only from \eqref{liouvilleN}. 

We also emphasize that, in addition to quantitative convergence estimates, Eq. \eqref{liouvilleN} also offers a straightforward manner to understand solutions to system \eqref{sys}. We are actually not able to give a precise meaning to trajectorial solutions to the original SDE system~\eqref{sys}. But instead we will be working here with so-called {\em entropy solutions} to \eqref{liouvilleN}, which are more straightforward to define (see the appendix for an example). Of course any strong solution to \eqref{sys} (in the probabilistic sense) would  also yield an entropy solution to \eqref{liouvilleN}. 
  
 \smallskip
 
The joint law $\rho_N$ is compared to the chaotic/tensorized law
$\bar \rho_N := \bar \rho^{\otimes N}=\Pi_{i=1}^N \bar\rho(t,x_i)$, built from the limit $\bar \rho$.
Of course $\bar\rho_N$ cannot be an exact solution to \eqref{liouvilleN} so the method will have to account for the difference.
%but  instead solves
%\begin{equation}
%  \begin{split}
%&\partial_t \bar \rho_N + \sum_{i=1}^N \udiv_{x_i}  \Bigl(\bar \rho_N  \, K\star_x \bar \rho(x_i) \,  \Bigr) 
%  = \sigma\, \sum_{i=1}^N \Delta_{x_i} \bar \rho_N \\ 
%&    \bar\rho_N\vert_{t=0} = (\bar{\rho}^0)^{\otimes N}
%  \end{split}  \label{tensorRhoN}
%\end{equation}
As probability densities, both $\rho_N$ and $\bar\rho_N$ are initially normalized by
\begin{equation} \label{Norm}
 \int_{\T^{dN}} \rho_N\vert_{t=0} = 1 =  \int_{\T^{d}} \bar  \rho_N\vert_{t=0},
\end{equation}
which is formally preserved by either \eqref{liouvilleN} or  \eqref{MFlimit}.%/\eqref{tensorRhoN}.
  The method leads in particular to direct estimates between $\bar\rho^{\otimes k}$ and any  observable or marginal of the system at a fixed rank $k$,
\[
\rho_{N,k}(t,x_1,\ldots,x_k)=\int_{\T^{(N-k)\,d}} \rho_N(t,x_1,\ldots,x_N)\,dx_{k+1}\ldots d x_{N}.
\]
We postpone a full presentation of our main result till the main section as this requires a more technical discussion of the method. Still, a good example of corollary from our more complete Theorem~\ref{Main} is a rigorous derivation of the Patlak-Keller-Segel system in the subcritical regime as given by
\begin{theorem} Assume that $\rho_N\in L^\infty(0,T;\;L^1(\T^{Nd}))$ is an entropy solution to Eq. \eqref{liouvilleN} normalized by \eqref{Norm} in the sense of Definition \ref{defentro}, with initial condition $\rho_N(t=0)=\bar\rho^{\otimes N}(t=0)$, and for the potential $V$ given by \eqref{logpotential}. Assume that  $\bar\rho \in L^\infty(0,T;W^{2,\infty}(\Pi^d))$ solves Eq. \eqref{MFlimit}  with $\inf \bar\rho >0$. Assume finally that $\lambda<2d\,\sigma$. Then there exists a constant $C>0$ and an exponent $\theta>0$ independent of $N$
  s.t. for any fixed $k$ 
\[
\|\rho_{N,k}-\bar\rho^{\otimes k}\|_{L^\infty(0,T;\;L^1(\T^{kd}))}\leq C\,k^{1/2}\,N^{-\theta}.
\]\label{PKSestimate}
\end{theorem}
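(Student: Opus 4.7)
The plan is to introduce a \emph{modulated free energy} that combines the normalized relative entropy of $\rho_N$ with respect to the chaotic target $\bar\rho_N$ and a quadratic modulation involving the interaction potential $V$, namely a functional of the schematic form
\begin{equation*}
\mathcal{E}_N(t) \;=\; \frac{1}{N}\,H(\rho_N \,|\, \bar\rho_N) \;-\; \frac{1}{2N^2} \int_{\T^{dN}} \rho_N \sum_{i\neq j} \bigl(V(x_i-x_j) - V\star\bar\rho(t,x_i) - V\star\bar\rho(t,x_j) + \langle V\star\bar\rho,\bar\rho\rangle\bigr)\,dx,
\end{equation*}
and then to establish a Gronwall-type inequality for $\mathcal{E}_N$. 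The extra modulation is unavoidable in the attractive case: because $V$ is singular and has the ``wrong'' sign, the pure relative-entropy method does not close, and it is only the combination of entropy and modulated energy that produces a coercive dissipation as long as $\lambda<2d\sigma$.

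First, I differentiate $\mathcal{E}_N$ in time, using \eqref{liouvilleN} for $\rho_N$ and \eqref{MFlimit} for $\bar\rho$. After several integrations by parts the computation splits into three pieces: a non-negative Fisher information dissipation $\sigma \int \rho_N \bigl|\nabla\log(\rho_N/\bar\rho_N)\bigr|^2$; a deliberate cancellation of the leading cross terms between the particle drift and the mean-field drift $K\star\bar\rho$, which is precisely why the quadratic modulation in $V$ is chosen as above; and a residual ``commutator'' term of the schematic shape
\begin{equation*}
R_N(t) \;=\; \frac{1}{N}\sum_{i=1}^N \int_{\T^{dN}} \rho_N\, \frac{\nabla\bar\rho(t,x_i)}{\bar\rho(t,x_i)} \cdot \Bigl(\frac{1}{N}\sum_{j\neq i} K(x_i-x_j) - K\star\bar\rho(t,x_i)\Bigr)\,dx,
\end{equation*}
which measures the error in replacing the empirical drift by the mean-field drift, tested against the smooth weight $\nabla\log\bar\rho$ (finite thanks to $\bar\rho\in W^{2,\infty}$ and $\inf\bar\rho>0$).

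The main obstacle is to dominate $R_N$ by the Fisher dissipation and $\mathcal{E}_N$ itself. I will invoke the Gibbs variational principle: for any $\eta>0$ and any bounded measurable $F$,
\begin{equation*}
\int \rho_N\, F\,dx \;\leq\; \frac{1}{\eta}\Bigl(H(\rho_N\,|\,\bar\rho_N) + \log \int \bar\rho_N\, e^{\eta F}\,dx\Bigr).
\end{equation*}
The technical heart of the argument is then a sharp large deviation estimate showing that, for the two-particle $F$ arising from $R_N$ after controlling its martingale/fluctuation part, the logarithmic moment generating function under $\bar\rho_N$ remains bounded uniformly in $N$ precisely when $\eta\,\lambda/\sigma<2d$. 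This is the step where the subcritical threshold enters: with $V\sim\lambda\log|x|$, the singular kernel $\nabla V\sim \lambda\,x/|x|^2$ is exponentially integrable against $\bar\rho$ up to but not including the critical strength, and $\lambda<2d\sigma$ is exactly what lets one pick $\eta$ large enough to absorb $R_N$ into the Fisher information dissipation after using Young's inequality. This is the most delicate part and the whole reason a new modulated \emph{free} energy, rather than a modulated energy alone, is required.

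Once $R_N$ is absorbed, the resulting differential inequality $\mathcal{E}_N'(t)\leq C\,\mathcal{E}_N(t)+ C\,N^{-1}$ combined with the chaotic initial condition $\rho_N(0)=\bar\rho^{\otimes N}(0)$, which gives $\mathcal{E}_N(0) = O(1/N)$, yields $\mathcal{E}_N(t)\leq C\,N^{-\theta_0}$ on $[0,T]$ by Gronwall, for some $\theta_0>0$ depending only on $\lambda/\sigma$ and $T$. Using once more the large deviation bound one controls the modulated energy term by $\mathcal{E}_N$ plus a vanishing remainder, which upgrades the estimate to $H(\rho_N|\bar\rho_N)/N \leq C\,N^{-\theta_0}$. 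The subadditivity of relative entropy for symmetric laws (an $N$-exchangeable consequence of the initial symmetry and the symmetric dynamics) then gives
\begin{equation*}
H(\rho_{N,k}\,|\,\bar\rho^{\otimes k}) \;\leq\; \frac{k}{N}\,H(\rho_N\,|\,\bar\rho_N) \;\leq\; C\,k\,N^{-\theta_0},
\end{equation*}
and a final application of Csisz\'ar--Kullback--Pinsker converts this into $\|\rho_{N,k}-\bar\rho^{\otimes k}\|_{L^1}\leq C\,k^{1/2}\,N^{-\theta_0/2}$, which is the announced statement after setting $\theta := \theta_0/2$.
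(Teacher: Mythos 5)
Your high-level scheme---a modulated free energy combining relative entropy with a quadratic modulation in $V$, a Gronwall inequality, subadditivity of relative entropy for exchangeable laws, and Csisz\'ar--Kullback--Pinsker---is indeed the paper's strategy. But two key steps are misidentified, and the single hardest estimate in the paper is essentially skipped.

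\textbf{The residual is not what you wrote, and the cancellation you invoke does more than you use.} After differentiating the modulated free energy the residual is \emph{not}
\[
R_N = \frac{1}{N}\sum_i\int\rho_N\,\frac{\nabla\bar\rho(x_i)}{\bar\rho(x_i)}\cdot\Bigl(\tfrac1N\sum_{j\neq i}K(x_i-x_j)-K\star\bar\rho(x_i)\Bigr),
\]
which is the residual one would get from the \emph{bare} relative-entropy method and which is not controllable for singular $K$ (the factor $K(x_i-x_j)\cdot\nabla\log\bar\rho(x_i)$ is unbounded). The role of the weights $G_N$, $G_{\bar\rho_N}$ is precisely to cancel the dangerous $\operatorname{div}K=-\Delta V$ term and to \emph{symmetrize} the residual into
\[
-\tfrac12\int_0^t\!\!\int_{\Pi^{dN}}\!\!\int_{\Pi^{2d}\cap\{x\neq y\}}\!\!\nabla V(x-y)\cdot\bigl(\phi(x)-\phi(y)\bigr)\,(d\mu_N-d\bar\rho)^{\otimes2}\,d\rho_N,\qquad \phi=\nabla\log\frac{\bar\rho}{G_{\bar\rho}},
\]
which is \emph{bounded} because $|\nabla V(x)|\le C/|x|$ pairs with the Lipschitz increment $|\phi(x)-\phi(y)|\le C|x-y|$. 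Without this structure there is nothing to close on. Note also that the $1/\sigma$ in $\mathcal K_N$ and the sign $E_N=\mathcal H_N+\mathcal K_N$ (rather than a minus sign) are not cosmetic: they are exactly what produces $\phi=\nabla\log\bar\rho+\sigma^{-1}\nabla V\star\bar\rho$ and hence the cancellation of $\Delta V$.

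\textbf{The subcritical threshold $\lambda<2d\sigma$ does not enter where you put it, and the coercivity step is missing.} Because the residual above is bounded, its control via Lemma~\ref{duality} and the Jabin--Wang large-deviation estimate (Theorem~\ref{largedeviationJaWamod}) is \emph{generic}: it holds for any bounded two-body function and uses no smallness of $\lambda/\sigma$. The threshold $\lambda<2d\sigma$ enters in an entirely different place: in proving that $E_N$ (more precisely $\tilde E_N$, after peeling off the smooth long-range part) is bounded \emph{below} by $\tfrac1C\mathcal H_N-CN^{-\theta}$. Since $V\le 0$ and is unbounded below, $\mathcal K_N$ can be very negative, so this coercivity is nontrivial. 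It is exactly Prop.~\ref{prop3.1} of the paper, proved by a short/long-range decomposition of $V$, the regularization/truncation/combinatorial large-deviation machinery of Section~\ref{LDE}, and ultimately the logarithmic Hardy--Littlewood--Sobolev inequality, which forces $\gamma=\lambda/(2\sigma)<d$, i.e.\ $\lambda<2d\sigma$, to make the rate functional $I(F)$ vanish. Your proposal collapses this entire step into the phrase ``using once more the large deviation bound''; that is the heart of the proof and cannot be omitted. Relatedly, one does \emph{not} absorb the residual into the Fisher information dissipation; the dissipation is simply dropped, and closure comes from Gronwall on the coercivity inequality $E_N(t)\le E_N(0)+C\int_0^t\mathcal H_N+C/N$ together with the lower bound $\tilde E_N\ge \tfrac1C\mathcal H_N-CN^{-\theta}$.
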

Theorem \ref{PKSestimate} follows directly from Theorem \ref{Main} will be stated below and the classical Csisz\'ar-Kullback-Pinsker inequality.  The exponent $\theta$ could be made fully explicit and actually depends only on $2d\,\sigma-\lambda$.   We highlight that we obtain, in dimension 2, the optimal constant $4\sigma$ which corresponds to the critical mass $8\pi\sigma$ for which we have blow-up in finite time for the  Patlak-Keller-Segel system.  

\smallskip

Because of the singular behavior of the potential, a full rigorous derivation  of the Patlak-Keller-Segel model from the stochastic equation \eqref{sys} or the Liouville eq.~\eqref{liouvilleN} had remained elusive, in spite of recent progress in \cite{CaPe,FoJo} or \cite{GQ,HaSc}. In particular, the results in \cite{FoJo} prove that any accumulation point as $N\to \infty$ of the random empirical measure associated to the system \eqref{sys} is a weak solution in some sense to \eqref{MFlimit} provided that one is in the so-called very subcritical regime with $\lambda<\sigma$. While this provides the mean field limit, at least in some weak sense, it does not imply propagation of chaos. We also emphasize that \cite{FoJo} is also able to obtain well-posedness for the original SDE system~\eqref{sys} in the same regime $\lambda<\sigma$. Of course the case of regularized Patlak-Keller-Segel interactions is much better understood with for example \cite{OliRicTom}.
%  Of course potentials like \eqref{logpotential} are  only one example of singular interactions between particles for which the mean field limit remains poorly understood, especially in the stochastic cases.  

\medskip

The paper is organized as follows: In Section \ref{NRE}, we introduce the new relative entropy with weights related to the Gibbs equilibrium $G_N$ and the corresponding distribution $G_{\bar\rho_N}$ given in \eqref{Gibbs}: modulated free energy. We also state our main quantitative Theorem  \ref{Main} based on this modulated energy and we describe the main steps of the proof for reader's convenience.  We finally present the explicit expression for the time evolution of such modulated free energy first in Proposition \ref{modulatedsmooth} for smooth solutions associated to smooth kernels and then Proposition \ref{modulatedweak} for entropy solutions associated to singular kernels.
In Section \ref{LDE}, we present various large deviation type estimates which play crucial roles in the proof of Theorem \ref{Main} to control the non-negativity of the modulated energy. 
For the reader's convenience, we conclude with an appendix which recalls some previous large deviation estimates in \cite{JaWa2} and proves the existence of entropy solutions for the Liouville equation~\eqref{liouvilleN} for the Patlak-Keller-Segel interaction kernel in two dimension.
%%%%%%%%%%%%%%%%%%%%%%%%%%%%%%%%%%%%%%%%%%%%%%%%%%%%%%%%%%%%%%
\section{New modulated free energy and main quantitative result \label{NRE}}
%%%%%%%%%%%%%%%%%%%%%%%%%%%%%%%%%%%%%%%%%%%%%%%%%%%%%%%%%%%%%
\subsection{Weighted relative entropy and the modulated free energy}
%%%%%%%%%%%%%%%%%%%%%%%%%%%%%%%%%%%%%%%%%%%%%%%%%%%%%%%%%%%%
The method will revolve around the control of a  rescaled entropy combining the relative entropy by Jabin-Wang  \cite{JaWa1} and the modulated energy by Serfaty \cite{Se} and Duerinckx \cite{Du}. This corresponds to a modulated free energy for the problem and reads 
\begin{equation}
\begin{split}
& E_{N} \Big(\frac{\rho_N}{G_N}\,|\;\frac{\bar \rho_N}{G_{\bar \rho_N}} \Big)    \\
  &\ =\frac{1}{N} %\Bigl[
  \int_{\Pi^{dN}} \rho_N (t,X^N) \log \Bigl(\frac{\rho_N(t,X^N) }{G_N(t,X^N)}\frac{G_{\bar  \rho_N}(t,X^N)}{\bar \rho_N(t,X^N)}\Bigr) dX^N,   \\ 
%& \hskip2cm -  \int_{\pi^{dN}} \rho_N (t,X^N) \, dX^N
 %  +\int_{\Pi^{dN}}  \frac{\bar \rho_N(t,X^N)} {G_{\bar \rho_N}(t,X^N)}  G_N(t,X^N\bigr)
  %      dX^N\Bigr]
\end{split}\label{modulateddef}
\end{equation}
where
\begin{equation}
\begin{split}\label{Gibbs}
& G_N(t, X^N) = \exp \bigg(- \frac{1}{2N\sigma } \sum_{i \ne  j} V (x_i-x_j)\bigg), \\ 
& G_{\bar\rho}(t, x)=\exp \bigg(-\frac{1}{\sigma}\,V\star \bar \rho(x) +\frac{1}{2\,\sigma}\,\int_{\Pi^{d} } V\star \bar \rho \, \bar\rho\bigg),  \\
& G_{\bar \rho_N} (t,X^N) = \exp \bigg(-  \frac{1}{\sigma} \sum_{i=1}^NV\star \bar \rho(x_i) 
     +  \frac{N}{2\sigma} \int_{\Pi^{d} } V\star \bar \rho \, \bar\rho \bigg),
\end{split}
\end{equation}
and throughout this article $X^N=(x_1, x_2, \cdots, x_N)$.
 This free energy may be understood as a relative entropy with two weights
(related to the Gibbs equilibrium) in the spirit of \cite{BrJa}. Note that
\[
E_{N} \Big(\frac{\rho_N}{G_N}\,|\;\frac{\bar \rho_N}{G_{\bar \rho_N}} \Big) 
   = {\mathcal H}_N(\rho_N\vert \bar \rho_N) 
    + {\mathcal K}_{N} (G_N\vert G_{\bar \rho_N}) 
    %+ {\mathcal L}_{\bar \rho_N/G_{\bar \rho_N}} (G_N \vert G_{\bar \rho_N})
    ,
    \]
where
\begin{equation}\label{HN}
{\mathcal H}_N(\rho_N\vert \bar \rho_N)
   = \frac{1}{N} \int_{\Pi^{dN}} \rho_N(t,X^N) \log\Bigl(\frac{\rho_N(t,X^N)}{\bar \rho_N(t,X^N)}\Bigr)
      \, d X^N 
\end{equation}
is exactly the relative entropy introduced in \cite{JaWa1,JaWa2} and 
\begin{equation}\label{KN}
 {\mathcal K}_{N} (G_N\vert G_{\bar \rho_N}) 
    = - \frac{1}{N}  \int_{\Pi^{dN}} \rho_N(t,X^N) \log \bigl(\frac{G_N(t,X^N)}{G_{\bar \rho_N}(t,X^N)} \bigr)
      \, d X^N
 \end{equation}
with $G_N$ and $G_{\bar\rho_N}$ given by Expressions \eqref{Gibbs} is the expectation of the modulated energy on which the method developed  in \cite{Du,Se,Se1} is based.  Indeed, it is easy to check that 
      \[
      \mathcal{K}_N(G_N \vert G_{\bar \rho_N} ) = \frac{1}{ 2 \sigma} \int_{\Pi^{d N}} \ud \rho_N \int_{\Pi^{2d} \cap \{x \ne y \}} V(x - y) (d \mu_N -  d \bar \rho)^{\otimes 2 }(x, y). 
      \]
Note that  $E_N$ is not a priori a positive quantity.  Since given any  two measures $f$ and $g$, not necessarily probability measures, by convexity one has  $\int (f \log f/g + g - f ) d x \geq 0$, which gives a lower bound for $E_N$, 
\[
E_{N} \Big(\frac{\rho_N}{G_N}\,|\;\frac{\bar \rho_N}{G_{\bar \rho_N}} \Big)  \geq \frac{1}{N} \int_{\Pi^{d N}} \bigg( \rho_N(t, X^N)  - G_N(t, X^N) \frac{\bar \rho_N(t, X^N) }{G_{\bar \rho_N} (t, X^N) }\bigg) d X^N, 
\]
or alternatively using Lemma 1 in \cite{JaWa2} 
\[
\begin{split}
& E_N = \mathcal{H}_N  -  \frac{1}{N} \int \rho_N \log \frac{G_N}{G_{\bar \rho_N} }  \geq  - \frac 1 N \log \int \bar \rho_N \frac{G_N}{G_{\bar \rho_N}}.  
\end{split} 
\]
As we mentioned already, the method in Theorem~\ref{Main} combines the methods developed in \cite{Du,Se,Se1} and \cite{JaWa1,JaWa2} (see also the summary in \cite{Sa}). The modulated energy in \cite{Se,Se1}  proved effective for the mean field limit for Coulomb or Riesz gases, and was able to take advantage of the specific structure of the interaction to improve on previous results; though for less general interaction than \cite{Hauray} for example. On the other hand, the relative entropy in \cite{JaWa1,JaWa2}  could not effectively handled gradient flows but performed well on interaction kernels that have bounded divergence with or without diffusion. In particular \cite{JaWa1,JaWa2} obtained quantitative mean field estimates from the 2d viscous model to the incompressible Navier-Stokes vs. previously only qualitative results in \cite{FHM,Osada86,Osada}.

While relative entropy at the level of the Liouville equation  have not been widely used for mean field limits, the relative entropy method initiated in  \cite{Yau}  is maybe the closest.  A different relative entropy approach at the level of the joint law of the full trajectories of the system was also developed in \cite{BenArousZeitouni}.

As mentioned in \cite{BrJaWa}, combining the relative entropy with a modulated energy has already been very successfully used for various singular limits in kinetic theory. A first example concerns the so-called quasineutral limit for plasmas for which we refer for instance to \cite{Kw} and \cite{PuSa}. Another example is the seminal derivation of the incompressible viscous Electro-magneto-hydrodynamics from the Vlasov-Maxwell-Boltzmann system in \cite{ArSa}; one issue in that monograph is in particular to prove the asymptotic positivity of the combined free energy, which is a problem that we are facing as well as explained below.  Specific tools are needed for the present (and different) context of the mean field limit for many-particle systems. Of course the role of the free energy for gradient flow systems has long been recognized, with \cite{CaJuMaToUn,Ot} being classical examples.  

%The present paper concerns the case with repulsive kernels
%with precise large deviation inequalities to be proved encoding the competition between diffusion and attraction. Coupled with the complementary s\'eminaire Laurent Schwartz paper \cite{BrJaWa1} which concerns the repulsive kernels, this provides the proof for general kernels (attractive-repulsive) announced by the authors in  the {\it C.R. Acad. Sciences} \cite{BrJaWa}: see remark just after 
%the main Theorem \ref{Main}.

%%%%%%%%%%%%%%%%%%%%%%%%%%%%%%%%%%%%%%%%%%%%%%%%%%%
\subsection{The main quantitative theorem}
%%%%%%%%%%%%%%%%%%%%%%%%%%%%%%%%%%%%%%%%%%%%%%%%%%%
Because our method relies on propagating nonlinear quantities such as the relative free energy,   some assumptions on the notion of solutions are required, namely
\begin{definition} \label{defentro} {\bf (Entropy solution)} Let $T>0$ be fixed. A density $\rho_N\in L^\infty(0,T; L^1(\Pi^{dN})$ with $\rho_N\ge 0$ and  $\int_{\Pi^{dN}} \rho_N dX^N= 1$, is an entropy solution to Eq. \eqref{liouvilleN} on the time interval $[0,T]$  if it solves \eqref{liouvilleN} in the sense of distributions, and for {\it a.e.} $t\le T$
\begin{equation}
\begin{split}
&\int_{\Pi^{dN}} \rho_N(t,X^N) \log \Big(\frac{\rho_N(t,X^N)}{G_N} \Big) \, dX^N \\
& + \sigma  \sum_{i=1}^N\int_0^t \int_{\Pi^{dN}} 
    \rho_N(s,X^N) \Bigl|\nabla_{x_i} \log \Big(\frac{\rho_N(s,X^N)}{G_N} \Big) \Bigr|^2  dX^N ds \\
&
     \le \int_{\Pi^{dN}} \rho^0_N  \log \bigl(\frac{\rho^0_N}{G_N}\bigr) \, dX^N \label{entropycontrol} 
\end{split}
\end{equation}
where for convenience we use in the article the notation $X^N = (x_1, \cdots, x_N)$. 
\end{definition}
Because of the singularity in the interaction, a weak solution to \eqref{liouvilleN} may not be an entropy solution in the sense given above. We note however as well that entropy solutions need not be unique and Theorem~\ref{Main} below applies to all entropy solutions if there exists more than one. While it is not the main purpose of this article, we include in the appendix a proof of the existence of entropy solutions for the Patlak-Keller-Segel setting.

Let us now specify the exact assumptions that are required on the potential $V$  
\begin{align}
  &V\in L^p(\Pi^d)\cap C^2(\Pi^d\setminus\{0\})\quad  \mbox{for some}\ p>1  \label{Vlp},\\
  &V(x)\geq \lambda \,\log |x|+C\quad  \mbox{for some}\ 0\leq \lambda<2\,d\,\sigma, \label{Vattractive}\\
  &|\nabla V(x)|\leq \frac{C}{|x|}.\label{nablaV}
\end{align}
with $C>0$ constant. Then the following theorem holds
\begin{theorem}\label{Main}
  Assume $K=-\nabla V$ with $V$ a singular potential that satisfying \eqref{Vlp}--\eqref{nablaV}. Consider $\bar \rho \in L^\infty(0,T;W^{2,\infty}(\Pi^d))$ solves Eq. \eqref{MFlimit}  with $\inf\bar\rho>0$. Assume finally that $\lambda<2d\,\sigma$. There exists  constants $C>0$ and $\theta>0$ s.t. for $\bar \rho_N=\Pi_{i=1}^N \bar \rho(t, x_i)$, and for the joint law $\rho_N$ on $\Pi^{dN}$ of any entropy solution to the SDE system \eqref{sys}, 
\[\begin{split}
& {\mathcal H}_N(t)+ |\mathcal{K}_N(t)| \leq e^{C_{\bar\rho}\,\|K\| \,t}\,\Big({\mathcal H}_N(t=0)+ |\mathcal{K}_N(t=0)|+\frac{C}{N^\theta}\Big),\\
\end{split}\]
where $H_N$ and $\mathcal{K}_N$ are defined by \eqref{HN} and \eqref{KN}, $C_{\bar \rho}$ and $\|K\|$ are  constants depends on $\bar \rho$ and the assumptions \eqref{Vlp}--\eqref{nablaV} on $V$ respectively.  
Hence if $H_N^0+|\mathcal{K}_N^0|\leq C\,N^{-\theta}$, then for any fixed marginal $\rho_{N,k}$ 
\[
\|\rho_{N,k}-\Pi_{i=1}^k \bar\rho(t, x_i)\|_{L^1(\Pi^{k\,d})}\leq C_{T,\bar\rho,k}\,N^{-\theta}.
\]
\end{theorem}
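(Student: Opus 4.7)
The plan is to differentiate in time the modulated free energy $E_N=\mathcal{H}_N+\mathcal{K}_N$ by means of Proposition \ref{modulatedweak}, and then close a Gronwall-type inequality on the quantity $\mathcal{H}_N+|\mathcal{K}_N|$ by controlling the resulting interaction remainder via the large deviation estimates of Section \ref{LDE}. Schematically, I expect a differential identity
\[
\frac{d}{dt}E_N(t) + \sigma\,\mathcal{D}_N(t) \leq \mathcal{R}_N(t),
\]
where $\mathcal{D}_N$ is a Fisher-information-type dissipation of $\rho_N$ against the tilted reference $G_N\,\bar\rho_N/G_{\bar\rho_N}$ and $\mathcal{R}_N$ is an interaction remainder. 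The gradient-flow structure \eqref{kernel}, combined with the Serfaty--Duerinckx cancellation \cite{Du,Se,Se1}, should collapse the raw evolution into a single symmetric double integral of the form
\[
\mathcal{R}_N(t)=\int_{\Pi^{dN}}\rho_N\int_{\Pi^{2d}\setminus\{x=y\}}\psi_{\bar\rho}(x,y)\,d(\mu_N-\bar\rho)^{\otimes 2}(x,y)\,dX^N,
\]
plus lower-order transport errors involving $\nabla\log\bar\rho$ and $K\star\bar\rho$, which are controlled by $\|K\|$ and the regularity of $\bar\rho$ as in \cite{JaWa1,JaWa2}.

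Next, I would split $\mathcal{R}_N$ according to whether the underlying pair kernel is bounded or singular. The bounded contributions can be absorbed into $C_{\bar\rho}\,\|K\|\,\mathcal{H}_N$ through the large deviation bounds of \cite{JaWa1,JaWa2} recalled in the appendix. The singular part, involving $V$ and $\nabla V$ evaluated on pairs, is the genuine difficulty that motivates introducing the modulated free energy: following Section \ref{LDE}, the goal is to dominate it by
\[
|\mathcal{R}_N^{\mathrm{sing}}|\leq \varepsilon\,\sigma\,\mathcal{D}_N + C_{\bar\rho}\,\|K\|\,(\mathcal{H}_N+|\mathcal{K}_N|) + C\,N^{-\theta},
\]
for some $\varepsilon<1$. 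This pivotal step is exactly where the subcritical hypothesis $\lambda<2d\sigma$ enters, because the exponential partition function $\int \bar\rho_N\,G_N/G_{\bar\rho_N}\,dX^N$ of the tilted reference stays uniformly bounded in $N$ only in that window; this is the same threshold that governs blow-up for the limit PDE \eqref{MFlimit}, and the large deviation inequalities of Section \ref{LDE} quantitatively turn that exponential integrability into the desired control of pair functionals.

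Once this bound on $\mathcal{R}_N$ is available, the dissipation term $\varepsilon\,\sigma\,\mathcal{D}_N$ is absorbed on the left-hand side. Using the lower bound $E_N\geq -C\,N^{-\theta}$ that stems from the very same partition-function estimate, one extracts separate controls on $\mathcal{H}_N$ and $|\mathcal{K}_N|$ from
\[
\frac{d}{dt}(\mathcal{H}_N+|\mathcal{K}_N|)\leq C_{\bar\rho}\,\|K\|\,(\mathcal{H}_N+|\mathcal{K}_N|) + C\,N^{-\theta},
\]
which integrates to the stated estimate. The $L^1$ bound on $\rho_{N,k}$ then follows by Csisz\'ar--Kullback--Pinsker together with sub-additivity of the relative entropy under marginalization for exchangeable laws, namely $H(\rho_{N,k}\,|\,\bar\rho^{\otimes k})\leq k\,\mathcal{H}_N(t)$. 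The hardest step is unquestionably the singular large deviation estimate: one must convert the logarithmic attractive singularity of $V$ into a controllable exponential moment of $\sum_{i\neq j}\psi(x_i,x_j)$ uniformly in $N$, and this succeeds precisely, and only, when $\lambda<2d\sigma$.
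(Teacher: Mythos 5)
Your high-level skeleton (propagate the modulated free energy via Proposition~\ref{modulatedweak}, control the right-hand side with large deviation estimates, then Gronwall and Csisz\'ar--Kullback--Pinsker) is the right one, but the proposal misplaces where the real difficulty sits and how it is resolved, and the Gronwall loop as written does not close.

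First, the interaction remainder in \eqref{ModulatedFreeEnergy} is \emph{not} singular: by hypothesis \eqref{nablaV}, the kernel $\nabla V(x-y)\cdot\bigl(\nabla\log\tfrac{\bar\rho}{G_{\bar\rho}}(x)-\nabla\log\tfrac{\bar\rho}{G_{\bar\rho}}(y)\bigr)$ is in $L^\infty$ because $\nabla\log(\bar\rho/G_{\bar\rho})$ is Lipschitz. The paper directly applies Lemma~\ref{duality} and Theorem~\ref{largedeviationJaWamod} and obtains $E_N(t)\le E_N(0)+C\int_0^t\mathcal{H}_N+C/N$, with \emph{no} need to borrow from the dissipation $\mathcal{D}_N$. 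In fact the paper explicitly states that the dissipation term in \eqref{ModulatedFreeEnergy} is not used at all; your $\varepsilon\,\sigma\,\mathcal{D}_N$ absorption is a phantom step. The place where the singular large deviation machinery of Section~\ref{LDE} is actually needed is the \emph{coercivity} of $E_N$: because $V$ is attractive, $\mathcal{K}_N$ can be very negative and a crude lower bound $E_N\ge -CN^{-\theta}$ is not enough. One needs the quantitative inequality $\mathcal{K}_N\ge -\tfrac{\lambda}{2\sigma}\cdot\tfrac1d\,\mathcal{H}_N - CN^{-\theta}$, so that $E_N\ge(1-\tfrac{\lambda}{2d\sigma})\mathcal{H}_N-CN^{-\theta}$; this is the only point at which $\lambda<2d\sigma$ enters, via the logarithmic HLS inequality inside Lemma~\ref{I(F)log}. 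Without this stronger coercivity you cannot pass from the a priori inequality on $E_N=\mathcal{H}_N+\mathcal{K}_N$ to a differential inequality on $\mathcal{H}_N+|\mathcal{K}_N|$; in your sketch the implication is merely asserted.

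Second, you omit the short-range/long-range decomposition $V=\lambda V_0+W$ that the paper performs, and this is not cosmetic. Proposition~\ref{prop3.1} (the heart of Section~\ref{LDE}) only applies after truncating $V$ at a small scale $\eta$: Lemma~\ref{I(F)log} requires $\|V_0\|_{L^1}<\delta$ so that the large deviation functional $I(F)$ vanishes, and the smooth tail $W$ must be removed from $\mathcal{K}_N$ and treated separately by the explicit computation of Lemma~\ref{regular}. The proof tracks $\tilde E_N=\mathcal{H}_N+\mathcal{K}_N(G_N|G_{\bar\rho_N})-\mathcal{K}_N(G_N^W|G_{\bar\rho_N}^W)$ precisely because the coercivity Proposition~\ref{prop3.1} is available only for the truncated potential. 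Conflating the full $\mathcal{K}_N$ with the truncated one, as your sketch implicitly does, leaves a hole where the condition ``$\eta$ small enough'' should be: the HLS comparison $I(F_{\tilde V})=0$ fails for potentials of $O(1)$ mass.

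The parts that are correct and match the paper: differentiating $E_N$ via Proposition~\ref{modulatedweak}, the Serfaty-type cancellation that reduces the remainder to a symmetric double integral against $(d\mu_N-d\bar\rho)^{\otimes 2}$, the identification of $\lambda<2d\sigma$ as the subcritical threshold, and the final Csisz\'ar--Kullback--Pinsker and sub-additivity step. What is missing is the genuine coercivity argument (Proposition~\ref{prop3.1} applied to the truncated potential, together with Lemma~\ref{regular} for the smooth tail), which is the article's main technical contribution.
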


\bigskip
\begin{remark}
Note that the same result may be obtained for attractive-repulsive kernels combining the present paper for attractive kernels to the method detailed in \cite{BrJaWa1} for repulsive kernels.  This corresponds
to the announced results in the note {\it C.R. Acad. Sciences} \cite{BrJaWa}. More precisely we can get Theorem \ref{Main} for an even kernel $V$ which may be decomposed as follows $V=V_a+V_r + V_s$ with $V_a$ an attractive part satisfying \eqref{Vlp}--\eqref{nablaV}, $V_s\in W^{2,\infty}(\Pi^d)$ a smooth part and $V_r$ a repulsive part satisfying
\begin{equation} \label{hyp01}
V_r(-x)=V_r(x) \quad \hbox{ and } \quad  V_r\in L^p(\Pi^d) \hbox{ for some } p>1
\end{equation}
with the following Fourier sign
\begin{equation}\label{hyp02}
\hat V_r(\xi) \ge 0 \hbox{ for all } \xi \in \R^d.
\end{equation}
One imposes  the following pointwise controls for all $x\in \T^d$: There exists constants $k$ and $C>0$
such that
\begin{equation}\label{hyp03}
 |\nabla V_r(\xi)| \le  \frac{C}{|x|^k}, \qquad 
 |\nabla^2 V_r(x)|\le \frac{C}{|x|^{k}},\quad |\nabla V_r(x)| \le C\frac{V_r(x)}{|x|},
  \end{equation}
together with 
\[
\lim_{|x|\to 0} V_r(x) = +\infty, \qquad V_r(x) \le C V_r(y) \hbox{ for all } |y|\le 2|x|,
\]
  and
  \[
  |\nabla_\xi \hat V_r(\xi) \le \frac{C}{1+|\xi|} \Bigl( \hat V_r(\xi) + \frac{1}{1+|\xi|^{d-\alpha}}\Bigr)
  \hbox{ with } 0<\alpha <d \hbox{ for all } \xi \in \R^d. 
  \]
  \end{remark}
%%%%%%%%%%%%%%%%%%%%%%%%%%%%%%%%%%%%%%%%%%%%%%%%%%%
\subsection{Proof of Theorem~\ref{Main}: the main steps}
%%%%%%%%%%%%%%%%%%%%%%%%%%%%%%%%%%%%%%%%%%%%%%%%%%%%%%%%%%
We describe here the various steps to prove Theorem~\ref{Main}.  It follows the general strategy detailed in \cite{BrJaWa1} which was dedicated to repulsive kernels but with some key differences due to the attractive singularity.

$\bullet$ {\em Step~1: The  modulated free energy inequality.} 
The first step is of course to look at the time evolution of our modulated free energy. It is possible to show that it satisfies the following inequality
\begin{equation}
\begin{split}
& E_{N}\left(\frac{\rho_N}{ G_{N}}\,\vert\;\frac{\bar\rho_N}{G_{\bar\rho_N}}\right)(t) \le E_{N}\left(\frac{\rho_N}{ G_{N}}\,\vert\;\frac{\bar\rho_N}{G_{\bar\rho_N}}\right)(0)
\\
&\hskip.2cm  -\frac{1}{2} \int_0^t\int_{\Pi^{dN}} \int_{\Pi^{2\,d}\cap \{x\neq y\}}  \nabla V(x-y) \cdot \\
 &\hskip2cm \Big(\nabla\log \frac{\bar\rho}{G_{\bar \rho}}(x)
 - \nabla\log \frac{\bar\rho}{G_{\bar \rho}}(y)\Big) (d\mu_N - d\bar\rho)^{\otimes 2} d\rho_N.
\end{split} \label{propfreeenergy}
\end{equation}
The inequality~\eqref{propfreeenergy} exactly corresponds to the free energy inequality \eqref{ModulatedFreeEnergy} that we prove later in Proposition~\ref{modulatedweak} in subsection~\ref{energynonsmoothV}. This inequality involves two difficulties: The formal calculations themselves are rather intricate and use in a critical manner the properties of the Gibbs equilibrium. The second issue is of course to justify those formal calculations for entropy solutions.
   For this reason we first explain the formal calculations in subsection~\ref{energysmoothV} in Proposition~\ref{modulatedsmooth} for smooth solutions, before presenting in Proposition~\ref{modulatedweak} the argument to extend the inequality to entropy solutions.

\medskip

$\bullet$ {\em Step~2: Control on the right-hand side.} Contrary to the repulsive case, the control on the right-hand side can immediately be obtained from \cite{JaWa2}. Precisely since we assumed in \eqref{nablaV} that 
$|\nabla V(x)|\leq {C}/{|x|}$, we have that 
\[
- \nabla V(x-y) \cdot (\phi(x) - \phi(y)) \in L^\infty, 
\] 
with $\phi(x) = \nabla \log \frac{\bar \rho}{G_{\bar \rho}}(x)$, thus we may directly apply Lemma~1 as in \cite{JaWa2}. We recall that this lemma reads
\begin{lemma}
  For any $\rho_N,\;\bar\rho_N$ in $\mathcal{P}(\Pi^{dN})$, any function $\psi\in L^\infty(\Pi^{d\,N})$ and any $\alpha>0$
  \[
\int_{\Pi^{dN}} \psi(X^N)\,d\rho_N\leq \frac{1}{\alpha\,N}\,\int_{\Pi^{dN}} d\rho_N\,\log \frac{\rho_N}{\bar\rho_N}+\frac{1}{\alpha\,N}\,\log\int_{\Pi^{dN}} e^{\alpha\,N\,\psi(X^N)}\,d\bar\rho_N. 
  \]\label{duality}
  \end{lemma}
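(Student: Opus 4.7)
The plan is to recognize Lemma~\ref{duality} as (a rescaling of) the classical Gibbs / Donsker--Varadhan variational principle for the relative entropy, and to derive it in essentially one line from Jensen's inequality applied to the concave logarithm.

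First I would dispose of the trivial case in which $\rho_N$ is not absolutely continuous with respect to $\bar\rho_N$: the relative entropy term on the right-hand side is then $+\infty$ and the inequality holds vacuously. Otherwise, set $f = \rho_N/\bar\rho_N$ so that $d\rho_N = f\, d\bar\rho_N$, with the usual convention $f\log f = 0$ on $\{f=0\}$.

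Next I would rearrange the target inequality. Multiplying by $\alpha N$ and moving the relative entropy to the left, and using $\log f = \log(\rho_N/\bar\rho_N)$ on the support of $\rho_N$, the claim becomes
\[
\int_{\Pi^{dN}} \log\!\bigl(e^{\alpha N \psi}/f\bigr)\, d\rho_N \;\leq\; \log \int_{\Pi^{dN}} e^{\alpha N \psi}\, d\bar\rho_N.
\]
Applying Jensen's inequality to the concave function $\log$ against the probability measure $d\rho_N$, the left-hand side is bounded by $\log \int (e^{\alpha N\psi}/f)\,d\rho_N$, which equals $\log\int e^{\alpha N\psi}\,d\bar\rho_N$ after the substitution $d\rho_N = f\, d\bar\rho_N$ cancels the factor of $f$. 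Dividing through by $\alpha N$ restores the stated form.

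The hypothesis $\psi \in L^\infty$ enters only to ensure that $e^{\alpha N \psi}$ is $\bar\rho_N$-integrable, so that the exponential moment on the right is finite; the positivity $\alpha>0$ is used only to divide at the final step. I do not expect any substantive obstacle. The single point of care is bookkeeping on the set $\{f=0\}$, which is handled cleanly by the absolute-continuity reduction at the outset; the rest is pure Jensen.
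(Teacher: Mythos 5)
Your proof is correct and is essentially the same argument as the paper's: the paper introduces the tilted probability density $f = e^{N\psi}\bar\rho_N / \lambda$ with $\lambda = \int e^{N\psi}\,d\bar\rho_N$ and invokes the nonnegativity of the relative entropy $\int \rho_N \log(\rho_N/f) \geq 0$ (which it calls ``convexity of the entropy''), whereas you rearrange the target inequality and apply Jensen directly to the concave logarithm against $d\rho_N$. Since nonnegativity of the Kullback--Leibler divergence is exactly Jensen applied to $-\log$, the two proofs are the same convexity argument in different bookkeeping; your absolute-continuity remark and the one-sided treatment of $\{f=0\}$ (where one only gets $\int (e^{\alpha N\psi}/f)\,d\rho_N \leq \int e^{\alpha N\psi}\,d\bar\rho_N$, which is still the right direction) are both sound.
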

For completeness, we give a simple proof of  Lemma \ref{duality} in the appendix.
Applying this lemma, we directly find that
\[
\begin{split}
&- \int_{\Pi^{dN}}\int_{\{x\not=y\}} \nabla V(x-y) \cdot (\phi(x)-\phi(y)) 
(d\mu_N - d\bar\rho)^{\otimes 2}\, d\rho_N\leq C\,\mathcal{H}_N(\rho|\;\bar\rho_N)\\
& +\frac{C}{N} \int_{\Pi^{dN}} d\bar\rho_N \exp\bigg(-\frac{N}{C}\int_{\{x\not=y\}} \nabla V(x-y) \cdot (\phi(x)-\phi(y)) 
(d\mu_N - d\bar\rho)^{\otimes 2}\bigg),
\end{split}
\]
where we denote $\phi(x)=\nabla \log ({\bar\rho}/{G_{\bar\rho}} )(x)$.
We now apply a simplified version of  Theorem~4 in \cite{JaWa2} which reads
\begin{theorem} {\rm (Theorem 4 in \cite{JaWa2})}.  Consider $\bar \rho \in \mathcal{P}(\Pi^d)$ and $f \in L^\infty( \Pi^{2d})$. Then there exists a constant $\alpha =\alpha(f)>0$ small enough such that   
\[
\sup_{N \geq 2} \, \int_{\T^{dN}}  \bar \rho^{\otimes N} \exp \bigg(  \alpha N  \int_{\Pi^{2d}} f(x,y)\,(d\mu_N-d\bar\rho)^{\otimes 2}   \bigg) d X^N  \leq  C < \infty.
\]\label{largedeviationJaWamod}
\end{theorem}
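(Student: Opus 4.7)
The plan is to reduce the desired estimate to exponential-moment control for a centered U-statistic and then bound the moments by a cluster-expansion argument. First I would introduce the doubly-centered kernel
\[
\tilde f(x,y) := f(x,y) - \int_{\Pi^d} f(x,z)\,d\bar\rho(z) - \int_{\Pi^d} f(z,y)\,d\bar\rho(z) + \iint_{\Pi^{2d}} f\,d\bar\rho\otimes d\bar\rho,
\]
which satisfies $\|\tilde f\|_\infty\le 4\|f\|_\infty$ and $\int_{\Pi^d}\tilde f(x,y)\,d\bar\rho(y)=0$ for a.e.\ $x$ (and symmetrically in $x$). An elementary expansion gives $\int f\,(d\mu_N-d\bar\rho)^{\otimes 2} = \int \tilde f\,d\mu_N\otimes d\mu_N$, and separating the diagonal $i=j$ from the rest,
\[
N\int \tilde f\,d\mu_N\otimes d\mu_N = S_N + R_N, \qquad S_N := \frac{1}{N}\sum_{i\ne j}\tilde f(x_i,x_j),
\]
with deterministic remainder $|R_N|\le 4\|f\|_\infty$. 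Hence $e^{\alpha R_N}$ is a harmless multiplicative constant and the task reduces to bounding $\sup_N\int_{\Pi^{dN}} \bar\rho^{\otimes N}\exp(\alpha S_N)\,dX^N$.

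Next I would Taylor expand in $\alpha$ and extract combinatorial structure. Expanding
\[
S_N^k = \frac{1}{N^k}\sum_{\substack{(i_1,j_1),\ldots,(i_k,j_k)\\ i_\ell\ne j_\ell}} \prod_{\ell=1}^k \tilde f(x_{i_\ell},x_{j_\ell})
\]
and integrating term by term against the i.i.d.\ product measure $\bar\rho^{\otimes N}$, the double centering of $\tilde f$ forces any multi-index in which some particle label appears in exactly one factor to contribute zero. The surviving multi-indices therefore correspond to multigraphs with $k$ ordered edges in which every vertex has degree at least two; such a multigraph has at most $v\le k$ vertices, each choice admitting $\le N^v$ label embeddings into $\{1,\ldots,N\}$, and the integrand is uniformly bounded by $(4\|f\|_\infty)^k$. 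This gives a schematic moment bound
\[
\bigl|\mathbb{E}_{\bar\rho^{\otimes N}}[S_N^k]\bigr|\le (4\|f\|_\infty)^k \sum_{v=1}^{k} N^{v-k}\,\#\{\text{admissible multigraphs with $v$ vertices}\}.
\]

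The hard part will be to bound this combinatorial sum by $(Ck)^k$. A naive count by the Bell numbers $B_{2k}$ grows too fast to survive division by $k!$, so one must exploit the factor $N^{v-k}$ to discount graphs with $v<k$: the saturating case $v=k$ forces every vertex to have degree exactly two, which amounts to a perfect matching of $[2k]$, of which there are $(2k-1)!!\le (Ck)^k$, while graphs with strictly fewer vertices (necessarily carrying vertices of degree $\ge 3$) are suppressed by negative powers of $N$. Combined with Stirling's lower bound $k!\ge (k/e)^k$, one concludes
\[
\sum_{k\ge 0} \frac{\alpha^k}{k!}\,\bigl|\mathbb{E}_{\bar\rho^{\otimes N}}[S_N^k]\bigr|\le \sum_{k\ge 0} (C'\alpha\|f\|_\infty)^k,
\]
which converges uniformly in $N$ provided $\alpha<1/(C'\|f\|_\infty)$, yielding the claimed bound. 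The delicate organization of this cluster expansion, including the careful treatment of the constraint $i_\ell\ne j_\ell$ and the connected-component decomposition of the underlying multigraph, is precisely the content of the argument carried out in \cite{JaWa2}.
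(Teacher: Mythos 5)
Your reduction is identical to the paper's: you pass to the doubly-centered kernel $\tilde f$ (the paper calls it $\phi$), observe $\|\tilde f\|_\infty\le 4\|f\|_\infty$, note the two cancellation identities $\int \tilde f(\cdot,y)\,d\bar\rho(y)=0$ and $\int\tilde f(x,\cdot)\,d\bar\rho(x)=0$, and rewrite $N\int f\,(d\mu_N-d\bar\rho)^{\otimes 2}$ as $\frac 1N\sum_{i,j}\tilde f(x_i,x_j)$, splitting off the diagonal as a bounded remainder. That is exactly what the paper does in the appendix, after which it invokes Theorem~4 of \cite{JaWa2} as a black box. Where you diverge is that you go on to sketch a proof of the underlying exponential-moment bound itself, via Taylor expansion and a multigraph count using the degree-$\ge 2$ constraint enforced by the double centering. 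This is a legitimate and standard way to see why such a result holds, and is in the same spirit as the argument in \cite{JaWa2}, which is phrased through moment controls (the $\sup_p\|\cdot\|_{L^p}/p$ condition on $\phi$ in Theorem~\ref{largedeviationJaWa}) and a geometric-series resummation.

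The one place where your sketch is genuinely incomplete is the combinatorial step. You assert that graphs with $v<k$ vertices are suppressed by the factor $N^{v-k}$ and that the $v=k$ perfect-matching case dominates, giving $(2k-1)!!\le(Ck)^k$. This is morally right but not immediate uniformly in $N\ge 2$: for $v<k$, the count of admissible maps from $[2k]$ to $v$ labels (all fibers of size $\ge 2$) can grow like $v^{2k}$, and showing that $N^{v-k}\,v^{2k}/v!$ summed over $v\le\min(k,N)$ stays $\le (Ck)^k$ uniformly in $N$ requires separating contributions by the number of ``extra'' degrees beyond $2$ at each vertex and tracking the trade-off between lost powers of $N$ and gained combinatorial freedom. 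You flag this honestly by deferring the ``delicate organization'' to \cite{JaWa2}, which is exactly where the paper also lands. So your proposal is correct as a roadmap and matches the paper's reduction step; the extra sketch you provide is compatible with, but does not replace, the argument it ultimately defers to.
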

Theorem~\ref{largedeviationJaWamod} is a straightforward reformulation of Theorem~4 in \cite{JaWa2} as we recall in the appendix (see Theorem \ref{largedeviationJaWa} there and its following comments).
This proves that
\[
\int_{\Pi^{dN}} d\bar\rho_N \exp\bigg(-\frac{N}{C}\int_{\{x\not=y\}} \nabla V(x-y) \cdot (\psi(x)-\psi(y)) 
(d\mu_N - d\bar\rho)^{\otimes 2}\bigg)\leq C_{\bar\rho}
\]
for some constant $C_{\bar\rho}$ depending on the $W^{2,\infty}$ norm of $\log \bar\rho$ and it subsequently implies that
\begin{equation}\label{Ineg} 
\begin{split} 
E_{N}\Big(\frac{\rho_N}{ G_{N}}\,\vert\;\frac{\bar\rho_N}{G_{\bar\rho_N}}\Big)(t) 
 \le E_{N}\Big(\frac{\rho_N}{ G_{N}}\,\vert\;\frac{\bar\rho_N}{G_{\bar\rho_N}}\Big)(0)  
  +C\,\int_0^t \mathcal{H}_N(\rho_N\,|\;\bar\rho_N) + \frac{C}{N}.
\end{split} 
  \end{equation}

\medskip

$\bullet$ {\em Step 3: A lower bound on ${\mathcal K}_N$ in $E_N$.}
 From \eqref{Ineg}, it remains to find a lower bound control on $E_N = {\mathcal H}_N + {\mathcal K}_N$ with
\[
  {\mathcal K}_N = \frac{1}{2\sigma} \int_{\Pi^{dN}}
    \int_{ \Pi^{2d } \cap \{x\not = y\}} V(x-y) (d\mu_N - d\bar\rho)^{\otimes 2}(x, y)  d \rho_N 
\]
a quantity which is non-necessarily positive or even asymptotically positive for attractive potentials $V$ since $V\leq 0$ and even $V(x)\to-\infty$ as $x\to 0$.  A natural idea would be to try to control ${\mathcal K}_N$ from below by ${\mathcal H}_N$. Unfortunately direct inequalities between ${\mathcal K}_N$ and ${\mathcal H}_N$ do not appear to be true. However it is possible to  compare ${\mathcal K}_N$
to ${\mathcal H}_N$ by splitting the study in two parts: short-range and long-range interactions. Therefore we introduce
\[
V(x)= \lambda V_0(x)+W(x)=V(x)\,\chi(|x|/\eta)+V(x)\,(1-\chi(|x|/\eta)),
\]
where $\chi$ is a smooth function with $\chi(x)=1$ if $x<1/2$ and $\mbox{supp}\,\chi\in [0,1]$ with a parameter $\eta$ which will be chosen later.

\smallskip

\noindent I) {\it The short-range interactions.} This case focuses on the main difficulty which is the singularity of $V$ near $0$. For this reason, we consider general truncated quantity (short-range interactions) of the type
\begin{equation}
\begin{split}
  &F (\mu)= -  \int_{\Pi^{2d}\cap\{x\neq y\}} V_{0}(x-y)\,(\mu(dx)-\bar\rho(x)\,dx)\,(\mu(dy)-\bar\rho(y)\,dy), \\
  & \hbox{ with } V_{0} \in L^p(\Pi^d),\quad |\nabla V_{0}(x)|\leq \frac{C}{|x|^k} \hbox{ with } k>1/2\\
  &  \hbox{ satisfying the inequality: } V_{0} (x-y)\geq  \log |x-y|\,\chi(|x-y|/\eta),
\end{split}\label{Flogexample}
\end{equation}
for $p>1$ and where we emphasize that, from $\chi$,  $\mbox{supp}\, V_{0}\in B(0,\eta)$.
 Of course we will use \eqref{Flogexample} for $\lambda V_0(x) = V(x)\chi(|x|/\eta)$ and we note that from assumptions \eqref{Vlp}--\eqref{nablaV}, such a $V_0$ indeed satisfies the assumptions in \eqref{Flogexample}. However \eqref{Flogexample} applies to $V_0$ with 
$|\nabla V_{0}(x)|\leq {C}/{|x|^k}$ instead of the more restrictive \eqref{nablaV} which is not required for the proposition below. Our goal is to prove the following
\begin{Prop}\label{prop3.1}
  There exists $\eta$ (depending only on $\|V_0\|_{L^p}$) s.t. for any $\gamma<d$, we have for $F$ defined by \eqref{Flogexample} that for some $\theta>0$
  \[\begin{split}
  &\gamma\,\int_{\Pi^{dN}} F(\mu_N)\,\rho_N\,dX^N\leq  \mathcal{H}_N(\rho_N\,\vert\;\bar\rho_N)\\
  &\quad+\frac{C}{N^\theta}\,(\log N+\|\log\bar\rho\|_{W^{1,\infty}}+\eta^{-1}),
\end{split}
\]\label{Frho_Nlog}
for all $N>\bar N$ with $\bar N$ depending only on the dimension and $d-\gamma$.
\end{Prop}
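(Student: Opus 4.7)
The approach I would take is first to apply the duality Lemma~\ref{duality} with exponent $\alpha=1$ and test function $\psi(X^N) = \gamma F(\mu_N)$, which directly yields
$$\gamma\int_{\T^{dN}} F(\mu_N)\,\rho_N\,dX^N \leq \mathcal{H}_N(\rho_N\,\vert\,\bar\rho_N) + \frac{1}{N}\log Z_N, \qquad Z_N := \int_{\T^{dN}} e^{N\gamma F(\mu_N)}\,d\bar\rho_N.$$
The proposition thus reduces to the upper bound $\frac{1}{N}\log Z_N \leq C\,N^{-\theta}(\log N + \|\log\bar\rho\|_{W^{1,\infty}} + \eta^{-1})$ for $N$ sufficiently large (in terms of $d-\gamma$).

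The core of the argument is a Taylor expansion of $Z_N$ combined with a combinatorial control of moments of centered empirical functionals. Expanding
$$Z_N = \sum_{k\geq 0} \frac{(N\gamma)^k}{k!}\,\mathbb{E}_{\bar\rho^{\otimes N}}\bigl[F(\mu_N)^k\bigr],$$
each moment has the representation
$$\mathbb{E}\bigl[F(\mu_N)^k\bigr] = (-1)^k \int_{\T^{2dk}} \prod_{i=1}^k V_0(z_i - w_i)\, \mathbb{E}\left[\prod_{i=1}^k (d\mu_N - d\bar\rho)(z_i)(d\mu_N - d\bar\rho)(w_i)\right].$$
Writing $\mu_N=\frac{1}{N}\sum_{j=1}^N \delta_{X_j}$ and using independence of $X_1,\ldots,X_N$ under $\bar\rho^{\otimes N}$, the inner expectation of a $2k$-fold product of centered empirical factors vanishes unless every particle index used appears at least twice. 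This reduces the sum over index assignments to a sum over set partitions $\pi$ of $\{1,\ldots,2k\}$ into blocks of size $\geq 2$; for such a partition with $b(\pi)\leq k$ blocks, the count of admissible assignments is at most $N^{b(\pi)}$, which combined with the prefactor $N^{-2k}$ from $F(\mu_N)^k$ yields a net factor of order $N^{-k}$.

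The spatial integral associated with each block is controlled by the key integrability bound: since $V_0(x)\ge \log|x|\,\chi(|x|/\eta)$ and $\mathrm{supp}\,V_0\subset B(0,\eta)$, one has for any $c<d$,
$$\int_{\T^d} e^{-c V_0(x)}\,dx \leq 1 + C_c\,\eta^{\,d-c}.$$
Together with the $L^p$ assumption on $V_0$ and boundedness of $\bar\rho$, this yields a per-block integral of the form $C(\eta)^{|B|}$ with $C(\eta)\to 0$ as $\eta\to 0$ (quantitatively in $\|V_0\|_{L^p}$). Summing over partitions of $\{1,\ldots,2k\}$ with standard Stirling bounds on the partition count contributes at most an additional $k!$, giving $|\mathbb{E}[F(\mu_N)^k]|\leq C(\eta)^k k!/N^k$. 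The Taylor series then collapses to a geometric series in $\gamma\,C(\eta)$ converging uniformly in $N$ whenever $\gamma\,C(\eta)<1$, a condition ensured by choosing $\eta$ small depending on $\gamma$ and $\|V_0\|_{L^p}$.

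The main obstacle is verifying rigorously that the per-block bound stacks multiplicatively with the correct $\eta$-decay; on blocks of size $\geq 3$ one must combine the single-factor integrability with $L^p$ interpolation in a telescoping fashion. The strict inequality $\gamma<d$ is essential here and mirrors the blow-up condition for the Patlak--Keller--Segel PDE. The additive terms $\log N$, $\|\log\bar\rho\|_{W^{1,\infty}}$, and $\eta^{-1}$ on the right-hand side of the proposition arise from an initial regularization step needed to justify the exchange of sum and integral for a log-singular kernel: one first mollifies $V_0$ at scale $\delta\sim N^{-a}$, producing an error controlled by $\delta\,\|\nabla V_0\|_\infty\lesssim\delta\,\eta^{-1}$ combined with $\|\nabla\log\bar\rho\|_\infty$, and truncates the Taylor series at order $k\sim \log N$, which accounts for the $\log N$ factor.
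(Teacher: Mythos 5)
Your overall outline (reduce via Lemma~\ref{duality} to bounding the partition function $Z_N$, then estimate $Z_N$) matches the paper's strategy, but the method you propose for bounding $Z_N$ is fundamentally different and has a genuine gap.

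The paper does not attempt a Taylor/moment expansion of $Z_N$. Instead it (i) regularizes $V_0$ at a small scale $\eps$, (ii) isolates the near-singular interactions of a single distinguished particle and controls them through a careful combinatorial decomposition over close particles (Steps 2--7 of Section~\ref{proofprop3.1}), (iii) uses the quantitative large deviation bound Prop.~\ref{largedeviation} built from the hypercube averaging of Prop.~\ref{largedeviationbasic}, and (iv) crucially invokes Lemma~\ref{I(F)log}, whose proof rests on the \emph{logarithmic Hardy--Littlewood--Sobolev inequality}, to establish that $I(\gamma' F_\eps)=0$ for $\gamma'<d$. That last point is where the optimal threshold $\gamma<d$ enters; it is the analytic input of the whole argument.

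Your moment expansion has the following problems. First, the per-block claim $C(\eta)^{|B|}$ is not substantiated: for the log-singular kernel one has $\int_{\Pi^d}|V_0|^m\,dx \sim m!$, so a large block forces you to integrate high powers of $V_0$ and the per-block contribution necessarily carries factorial growth in $|B|$. Combined with the number of set partitions of $\{1,\dots,2k\}$ into blocks of size $\geq 2$ — which grows super-exponentially faster than $k!$ — your claimed moment bound $|\mathbb{E}[F(\mu_N)^k]|\leq C(\eta)^k k!/N^k$ is not achievable by the accounting you describe. The assertion that ``standard Stirling bounds on the partition count contribute at most an additional $k!$'' is false. A correctly organized cluster expansion might still work, but the cancellation and the role of the cycle structure must be tracked carefully, and you explicitly defer exactly this step (``on blocks of size $\geq 3$ one must combine...''), which is the heart of the difficulty, not a technical afterthought.

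Second, you conclude by requiring $\eta$ small ``depending on $\gamma$ and $\|V_0\|_{L^p}$,'' while the proposition specifies $\eta$ depending \emph{only} on $\|V_0\|_{L^p}$. In the paper, $\eta$ is chosen in Lemma~\ref{I(F)log} purely from the $L^p$-norm (via $\|\tilde V\|_{L^1}\leq\delta$), and the dependence on $d-\gamma$ is absorbed into the constants and the threshold $\bar N$ through the logarithmic HLS inequality. Your scheme does not reproduce this dependency structure, and it is unclear whether your per-block constant would give the sharp threshold $\gamma<d$ rather than a strictly smaller one. Third, your heuristic explanation for the error terms ($\log N$, $\|\log\bar\rho\|_{W^{1,\infty}}$, $\eta^{-1}$) via truncating the Taylor series at order $k\sim\log N$ does not match the paper, where these terms come from the hypercube discretization error in Prop.~\ref{largedeviation} and from the regularization scale $\eps$ of the potential.

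In short: the reduction to $Z_N$ is the same, but the estimate of $Z_N$ by moment expansion is a different approach whose combinatorics, as written, do not close, and the essential analytic ingredient identifying the threshold $\gamma<d$ — the logarithmic Hardy--Littlewood--Sobolev inequality — is absent from your argument.
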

Prop.~\ref{prop3.1} is the main technical result of this article as it extends classical large deviation estimates to singular attractive potentials. Because of this, its proof is also rather intricate and is performed in Section~\ref{LDE}. We also insist that Prop.~\ref{prop3.1} only holds for some $\eta$ small enough which is the reason for the decomposition of the potential $V$ into short and long ranges.

\smallskip

\noindent {II) \it The long-range interactions.} 
The second part consists in controlling the long-range interaction with $W(x) =V(x)\,(1-\chi(|x|/\eta))$.  Because $W$ is actually smooth, this can be done by  rather straightforward contributions.
 Define $G_N^{W}$ and $G_{\bar\rho_N}^{W}$ for $W$ 
in the similar manner as for $V$,
\begin{equation}\label{GG}
\begin{split}
& G_N^W(t,X^N) = \exp \bigg(- \frac{1}{2N\sigma } \sum_{i,j=1}^N W (x_i-x_j)\bigg),\\ & 
G_{\bar\rho}^W(x)=\exp \bigg(-\frac{1}{\sigma}\,W\star \bar \rho(x) +\frac{1}{2\,\sigma}\,\int_{\Pi^{d} } W\star \bar \rho \, \bar\rho\bigg),  \\
& G_{\bar \rho_N}^W (t,X^N) = \exp \bigg(-  \frac{1}{\sigma} \sum_{i=1}^N W\star \bar \rho(x_i) 
     +  \frac{N}{2\sigma} \int_{\Pi^{d} } W\star \bar \rho \, \bar\rho\bigg).
\end{split}
\end{equation} 
We calculate separately the 
evolution in time of the contribution of $W$ in the modulated free energy through
\begin{lemma}\label{regular} 
  For $W \in W^{2,\infty}_{\rm per}$ even and with $\Delta W(0)=0$, one has that
  \[\begin{split}
& - \frac{d}{dt} {\mathcal K}_N(G_N^W\vert G_{\bar\rho_N}^W)
 = \frac{d}{dt}\frac{1}{N}\int_{\Pi^{dN}} \rho_N \log \frac{G^W_{\bar \rho_N}}{G^W_N}\,dX^N\\
  &\quad=\int_{\Pi^{dN}}  d \rho_N\,\int_{\Pi^{2d}} \Delta W(x-y)\,(d\mu_N-d\bar\rho)^{\otimes 2}\\
&\quad  -\frac{1}{\sigma} \int_{\Pi^{dN }}  d \rho_N\,\int_{\Pi^{2d}} \int_{\Pi^d} \nabla W(z-x)\cdot\nabla V(z-y)\,\ud \mu_N(z) \,(d\mu_N-d\bar\rho)^{\otimes 2} (x,y)\\ 
  &\quad-\frac{1}{2\,\sigma}\,\int_{\Pi^{dN}} d \rho_N\int_{\Pi^{2d}} \nabla W(x-y)\,(\nabla V\star \bar \rho(x)-\nabla V\star\bar\rho(y))\,(d\mu_N-d\bar\rho)^{\otimes 2}(x,y). 
  \end{split}
  \]
\end{lemma}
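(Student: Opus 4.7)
The plan is to take the time derivative of $\mathcal{J}(t):=\frac{1}{N}\int_{\Pi^{dN}}\rho_N\log(G^W_{\bar\rho_N}/G^W_N)\,dX^N$ directly, by differentiating both factors and using the Liouville equation \eqref{liouvilleN} for $\rho_N$ and the McKean-Vlasov equation \eqref{MFlimit} for $\bar\rho$. First I would set $\Psi_N:=\log(G^W_{\bar\rho_N}/G^W_N)$ and, using that $W$ is even (so $\nabla W$ is odd) and that $G^W_N$ is time-independent, compute the particle gradients
\[
\nabla_{x_k}\Psi_N = \frac{1}{\sigma}\int_{\Pi^d}\nabla W(x_k-y)\,(d\mu_N-d\bar\rho)(y).
\]
The crucial role of the hypothesis $\Delta W(0)=0$ appears here: it allows the diagonal $j=k$ term in $\sum_j\Delta W(x_k-x_j)$ to be absorbed into the integral against $\mu_N$, yielding the clean expression $\Delta_{x_k}\Psi_N = \frac{1}{\sigma}\int\Delta W(x_k-y)(d\mu_N-d\bar\rho)(y)$. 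For $\partial_t\Psi_N$, using $W$ even to symmetrize the term $\frac{N}{2\sigma}\int W\star\bar\rho\,\bar\rho$, one obtains $\partial_t\Psi_N=-\frac{N}{\sigma}\int (W\star\partial_t\bar\rho)(z)\,(d\mu_N-d\bar\rho)(z)$, and then the McKean-Vlasov equation together with the identity $W\star\udiv F=\nabla W\star F$ gives $W\star\partial_t\bar\rho=\sigma\,\Delta W\star\bar\rho+\nabla W\star(\bar\rho\,\nabla V\star\bar\rho)$.

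Next I would plug these into
\[
\frac{d}{dt}\mathcal{J}=\frac{1}{N}\int \partial_t\rho_N\cdot\Psi_N\,dX^N+\frac{1}{N}\int\rho_N\,\partial_t\Psi_N\,dX^N,
\]
integrate by parts in the first term using the Liouville equation, and regroup. The $\sigma\sum_k\Delta_{x_k}\Psi_N$ piece gives, after writing $d\mu_N=(d\mu_N-d\bar\rho)+d\bar\rho$, the term $\int\!\!\int \Delta W(x-y)(d\mu_N-d\bar\rho)^{\otimes 2}$ (term (1) of the lemma) plus a cross term which exactly cancels against the $\sigma\Delta W\star\bar\rho$ contribution coming from $\partial_t\Psi_N$, using evenness of $\Delta W$. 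The drift piece $\frac{1}{N}\sum_k F_k\cdot\nabla_{x_k}\Psi_N$ produces a triple integral $-\frac{1}{\sigma}\int\!\!\int\!\!\int\nabla V(z-y)\cdot\nabla W(z-x)\,d\mu_N(z)\,d\mu_N(y)\,(d\mu_N-d\bar\rho)(x)$. Splitting $d\mu_N(y)=(d\mu_N-d\bar\rho)(y)+d\bar\rho(y)$, the first half yields term (2). The remaining contribution, combined with the $\nabla W\star(\bar\rho\nabla V\star\bar\rho)$ piece from $\partial_t\Psi_N$, leaves a non-symmetric expression $-\frac{1}{\sigma}\int\!\!\int \nabla V\star\bar\rho(z)\cdot\nabla W(z-x)(d\mu_N-d\bar\rho)^{\otimes 2}(z,x)$, which I symmetrize by averaging with the $(x,z)$-swapped copy and using the oddness of $\nabla W$ to produce precisely term (3).

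The main obstacle is the bookkeeping: one must keep track of many crossed terms arising from the quadratic "fluctuation" structure $(d\mu_N-d\bar\rho)^{\otimes 2}$ combined with the cubic drift, and identify the right cancellations. Nothing is analytically deep, but the computation only closes if one carefully exploits three facts in combination: (i) $\Delta W(0)=0$ to absorb self-interactions into $\mu_N$ integrals; (ii) $W$ even both for the symmetric handling of $\frac{d}{dt}\int W\star\bar\rho\,\bar\rho$ and for the oddness of $\nabla W$ used to symmetrize the last term into the antisymmetric combination $\nabla V\star\bar\rho(x)-\nabla V\star\bar\rho(y)$; and (iii) the IBP identity $W\star\udiv F=\nabla W\star F$, which is what produces a $\nabla W$ from the $\udiv(\bar\rho\,\nabla V\star\bar\rho)$ in the limit equation and matches it against the $\nabla W$ coming from $\nabla_{x_k}\Psi_N$. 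Since $W\in W^{2,\infty}$ all the integrations by parts and the interchange of derivatives are justified without any difficulty, so there is no issue of regularity in this lemma.
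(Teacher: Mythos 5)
Your proposal is correct and takes essentially the same route as the paper's proof: differentiate $\frac{1}{N}\int\rho_N\log(G^W_{\bar\rho_N}/G^W_N)$ directly using the Liouville equation for $\rho_N$ and the McKean--Vlasov equation for $\bar\rho$, integrate by parts, and symmetrize using the evenness of $W$ and oddness of $\nabla W$ to produce the antisymmetric form of the last term. The paper organizes the computation as explicit double and triple sums $\sum_{i,j}$, $\sum_{i,j,k}$ that are then repackaged into integrals against $(d\mu_N - d\bar\rho)^{\otimes 2}$, whereas you first compute the intermediate objects $\nabla_{x_k}\Psi_N$, $\Delta_{x_k}\Psi_N$, $\partial_t\Psi_N$; the two are notational variants of the same calculation, and you correctly identify the role of the three hypotheses ($\Delta W(0)=0$, evenness of $W$, and the identity $W\star\udiv F=\nabla W\star F$) exactly as they are used in the paper.
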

Of course the right-hand side in this lemma involves more derivatives of $W$ than what we observed in \eqref{propfreeenergy} because there is no particular structure in ${\mathcal K}_N(G_N^W\vert G_{\bar\rho_N}^W)$. However since $W$ is smooth, this actually does not matter.
 In particular we observe that the 2nd term in the right-hand side can be rewritten as 
 \[
 \begin{split}
 &  -\frac{1}{\sigma} \int_{\Pi^{dN}} d\rho_N\,\int_{\Pi^{2d}} \int_{\Pi^d} \nabla W(z-x)\cdot\nabla V(z-y)\, \bar  \rho(z) \ud z  \,(d\mu_N-d\bar\rho)^{\otimes^2} \\
   & - \frac{1}{2 \sigma} \int_{\Pi^{dN}}  d \rho_N \int_{\Pi^{2d}} \nabla V(x-y) (\nabla W \star (\mu_N - \bar \rho)(x) - \nabla W \star (\mu_N - \bar \rho)(y) )\\
   &\hskip8cm(d \mu_N -  d \bar \rho)^{\otimes 2}.  
 \end{split}
 \]
 We may then directly use Lemma \ref{duality} and Theorem~\ref{largedeviationJaWamod} as in Step~$2$ to bound
\begin{equation}
 -\frac{d}{dt} {\mathcal K}_N(G_N^{W}\vert G_{\bar\rho_N}^{W})
 \le C {\mathcal H}_N(\rho_N\vert \bar\rho_N) + \frac{C}{N}.\label{boundLRI}
\end{equation}

\medskip

$\bullet$ {\em Step 4: Coercivity of $\tilde E_N$} We write
\[
\tilde E_N=E_N-{\mathcal K}_N(G_N^W\vert G_{\bar\rho_N}^W)=\mathcal{H}_N+\mathcal{K}_N(G_N\vert G_{\bar\rho_N})-{\mathcal K}_N(G_N^W\vert G_{\bar\rho_N}^W),
\]
where we remove the long range interaction from $\mathcal{K}_N$. From our choices and definitions, we have that
\[
\mathcal{K}_N(G_N\vert G_{\bar\rho_N})-{\mathcal K}_N(G_N^W\vert G_{\bar\rho_N}^W)
=- \frac{\lambda}{2\sigma} \,\int_{\Pi^{dN}} F(\mu_N)\,\rho_N\,dX^N,
\]
where $F$ is defined through \eqref{Flogexample} with $\lambda V_0 (x) = V(x) \chi(|x|/\eta)$. Applying Prop.~\ref{prop3.1} with $\gamma = \lambda/2\sigma$ we hence get that
\begin{equation}\label{lower}
\tilde E_N\geq \frac{1}{C}\, {\mathcal H}_N(\rho_N\vert \bar\rho_N)-\frac{C}{N^\theta},
\end{equation}
for some $C>1$ and  $\theta>0$ assuming $\lambda < 2\sigma d$. See also for instance Eq. (26) and (27) in \cite{BrJaWa}.

$\bullet$ {\em Step 5: Conclusion of the proof.}
We are now ready to explain how to conclude the proof of the theorem. 
 Combining \eqref{Ineg} with \eqref{boundLRI} to obtain that
\begin{equation}
\tilde E_N\leq \tilde E_N(t=0)+C\,\int_0^t {\mathcal H}_N(\rho_N\vert \bar\rho_N) + \frac{C}{N}.\label{timetildeEN}
\end{equation}
Inserting this into \eqref{lower}, we deduce by Gronwall's Lemma that
  \begin{equation} \label{HNN}
\frac{1}{C}\, {\mathcal H}_N(\rho_N\vert \bar\rho_N)(t) \leq e^{C\,t}\,\left(\tilde E_N(t=0)+\frac{C}{N^\theta}\right).
  \end{equation} 
   Finally since $W$ is smooth, from Eq. \eqref{boundLRI} for instance, we trivially have that
 \begin{equation}\label{KNW}
|{\mathcal K}_N(G_N^W\vert G_{\bar\rho_N}^W)|(t) \leq
    |{\mathcal K}_N(G_N^W\vert G_{\bar\rho_N}^W)(0)+  C\,\int_0^t {\mathcal H}_N(\rho_N\vert \bar\rho_N) + \frac{C}{N}.
 \end{equation}
 Combining \eqref{lower} with \eqref{timetildeEN} and \eqref{HNN},  we can control $|\mathcal{K}_N(G_N\vert G_{\bar\rho_N})-{\mathcal K}_N(G_N^W\vert G_{\bar\rho_N}^W)|$  which finally allows to derive all estimates in the main theorem.

\bigskip

To fully complete the proof, it only remains to prove our modulated free energy inequality \eqref{propfreeenergy}, Prop.~\ref{prop3.1} and Lemma~\ref{regular}. The inequality \eqref{propfreeenergy} is proved in the next two subsections in two steps: firstly Prop.~\ref{modulatedsmooth} for the formal calculations and finally Prop.~\ref{modulatedweak} for the rigorous derivation in subsection~\ref{energynonsmoothV}. Lemma~\ref{regular} follows mostly the same calculations and is given just after in subsection~\ref{secregular}. The proof of Prop.~\ref{prop3.1} is the main object of section~\ref{LDE}.
%%%%%%%%%%%%%%%%%%%%%%%%%%%%%%%%%%%%%%%%%%%%%%%%%%%%%%%%%%
\subsection{Modulated free energy control for smooth potential~$V$\label{energysmoothV}}
%%%%%%%%%%%%%%%%%%%%%%%%%%%%%%%%%%%%%%%%%%%%%%%%%%%%%%%%%%
Our first step  is to prove the following explicit expression for the time evolution of the modulated  free energy $E_N$ where all interactions are smooth.
\begin{Prop}\label{modulatedsmooth}
  Assume that $V$ is a ${\mathcal C}^2$ even function  and that $\rho_N$ is a classical solution to \eqref{liouvilleN} and $\bar \rho$ solves \eqref{MFlimit}. Then the modulated free energy $E_N$  defined by \eqref{modulateddef} satisfies
\[
\begin{split}
& E_{N}\Big(\frac{\rho_N}{G_{N}}\,\vert\;\frac{\bar\rho_N}{G_{\rho_N}}\Big)(t) \\
& \le 
E_{N}\Big(\frac{\rho_N}{G_{N}}\,\vert\;\frac{\bar\rho_N}{G_{\rho_N}}\Big)(0) -\frac{\sigma}{N}\,\int_0^t \int_{\Pi^{d\,N}} \rho_N\,\left|\nabla\log \frac{\rho_N}{\bar\rho_N}-\nabla\log \frac{G_N}{G_{\bar\rho_N}}\right|^2\\
&\ -\frac{1}{2} \int_0^t  \int_{\Pi^{dN}} \int_{\Pi^{2\,d} \cap \{  x \ne y \}}  \nabla V(x-y) \cdot \left(\nabla\log \frac{\bar\rho}{G_{\bar \rho}}(x)
- \nabla\log \frac{\bar\rho}{G_{\bar \rho}}(y)\right)\\
&\hspace{250pt} (d\mu_N - d\bar\rho)^{\otimes 2} d\rho_N,
\end{split} 
\]
where $\mu_N=\frac{1}{N}\,\sum_{i=1}^N \delta(x-x_i)$ is the empirical measure associated to the point configuration $X^N=(x_1, \cdots, x_N)$. 
\end{Prop}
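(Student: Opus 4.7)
\emph{Plan of proof.} The strategy is to exploit the twin gradient-flow structures enjoyed by \eqref{liouvilleN} and \eqref{MFlimit}. The key identities --- direct consequences of $K=-\nabla V$ and the evenness of $V$ --- are
\[
\sigma\,\nabla_{x_i}\log G_N(X^N) = -\frac{1}{N}\sum_{j\neq i}\nabla V(x_i-x_j), \qquad \sigma\,\nabla\log G_{\bar\rho}(x)=-\nabla V\star\bar\rho(x),
\]
which recast the Liouville equation as $\partial_t\rho_N=\sigma\sum_i\mathrm{div}_{x_i}(\rho_N\,\nabla_{x_i}\log(\rho_N/G_N))$ and \eqref{MFlimit} as $\partial_t\bar\rho=\sigma\,\mathrm{div}(\bar\rho\,\nabla\log(\bar\rho/G_{\bar\rho}))$. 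Setting $\phi(x):=\nabla\log(\bar\rho/G_{\bar\rho})(x)$, the tensorization $\nabla_{x_i}\log G_{\bar\rho_N}=\nabla\log G_{\bar\rho}(x_i)$ yields the separable identity $\nabla_{x_i}\log(\bar\rho_N/G_{\bar\rho_N})=\phi(x_i)$, which is the property that makes the reference $\bar\rho_N/G_{\bar\rho_N}$ well suited to the calculation even though $\bar\rho_N$ is not a solution of the full Liouville equation.

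I would then split $E_N=\frac{1}{N}\int\rho_N\log(\rho_N/G_N)-\frac{1}{N}\int\rho_N\log(\bar\rho_N/G_{\bar\rho_N})$ and differentiate each piece in time. The first piece is the relative entropy against the time-independent reference $G_N$; using the gradient-flow form of the Liouville equation, one integration by parts produces the Fisher-information dissipation $-(\sigma/N)\sum_i\int\rho_N|\nabla_{x_i}\log(\rho_N/G_N)|^2$. The second piece contributes $\int\partial_t\rho_N\log(\bar\rho_N/G_{\bar\rho_N})$, handled again by integration by parts and yielding the cross term $-\sigma\sum_i\int\rho_N\,\nabla_{x_i}\log(\rho_N/G_N)\cdot\phi(x_i)$, plus $\int\rho_N\,\partial_t\log(\bar\rho_N/G_{\bar\rho_N})$, which I would expand using $\partial_t\bar\rho=\sigma\,\mathrm{div}(\bar\rho\,\phi)$ to eliminate time derivatives of $\bar\rho$ appearing both through $\log\bar\rho_N$ and, via the chain rule, through $V\star\bar\rho$ inside $\log G_{\bar\rho_N}$. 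Regrouping the gradient-squared contributions into a complete square produces the announced dissipation $-(\sigma/N)\sum_i\int\rho_N|\nabla_{x_i}\log(\rho_N/\bar\rho_N)-\nabla_{x_i}\log(G_N/G_{\bar\rho_N})|^2$, leaving a residual of ``interaction'' terms involving only $\nabla V$, $\phi$, $\mu_N$ and $\bar\rho$.

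The remaining work is then to reassemble this residual as the advertised double integral of $\nabla V(x-y)\cdot(\phi(x)-\phi(y))$ against $(d\mu_N-d\bar\rho)^{\otimes 2}d\rho_N$. Each residual term is naturally a bilinear form in $(x,y)$ tested against one of the four products $d\mu_N\otimes d\mu_N$, $d\mu_N\otimes d\bar\rho$, $d\bar\rho\otimes d\mu_N$, $d\bar\rho\otimes d\bar\rho$. Using the parity $V(-x)=V(x)$ (so that $\nabla V$ is odd) to symmetrize in $(x,y)$, all contributions linear in $(\mu_N-\bar\rho)$ cancel exactly, leaving precisely the symmetric quadratic form $(d\mu_N-d\bar\rho)^{\otimes 2}$; the restriction $\{x\neq y\}$ records the absence of the $i=j$ term in the definition of $G_N$, which is exactly the diagonal contribution to the empirical-measure integral.

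The main obstacle is organizational rather than analytical. Because $V\in C^2$ and $\rho_N$ is classical, every integration by parts is fully justified, and no compactness or approximation argument is needed. The challenge lies in the precise bookkeeping required to match three distinct sources of interaction terms --- the drift $\sigma\,\nabla_{x_i}\log G_N$ coming from the Liouville equation, the time derivative $\partial_t(V\star\bar\rho)$ arising inside $\log G_{\bar\rho_N}$, and the cross term produced by the Fisher-information completion of squares --- so that only the symmetric, mean-zero quadratic form in $(\mu_N-\bar\rho)$ survives. Once this algebra is executed, the identity emerges as an equality; the $\le$ sign in the statement simply accommodates the weak/entropy setting of the companion Proposition~\ref{modulatedweak}, where the entropy dissipation inequality \eqref{entropycontrol} is only one-sided.
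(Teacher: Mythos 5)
Your plan follows essentially the same strategy as the paper's proof: exploit the gradient-flow structure of both the Liouville and McKean--Vlasov equations (the rewriting $\partial_t\rho_N=\sigma\sum_i\mathrm{div}_{x_i}(\rho_N\nabla_{x_i}\log(\rho_N/G_N))$ and the tensorized reference $\bar\rho_N/G_{\bar\rho_N}$), differentiate the two halves of $E_N$, complete the square to extract the Fisher dissipation, and then symmetrize the residual in $(x,y)$ using $\nabla V(-z)=-\nabla V(z)$ to produce the $(d\mu_N-d\bar\rho)^{\otimes 2}$ form, with the $\{x\neq y\}$ restriction correctly accounting for the missing diagonal in $G_N$. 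The paper packages the completion-of-squares step through an abstract entropy--entropy-dissipation identity for self-adjoint diffusion operators $\partial_t u=M^{-1}\mathrm{div}(M\nabla u)$ with $M=G_N$, treating $\bar\rho_N/G_{\bar\rho_N}$ as an approximate solution with remainder $R_N$; this is the same computation you describe, merely unbundled, and the only caution is to track signs and $1/N$ prefactors carefully when expanding the cross term (in your sketch the cross contribution from $-\tfrac{1}{N}\int\partial_t\rho_N\log(\bar\rho_N/G_{\bar\rho_N})$ should come out as $+\tfrac{\sigma}{N}\sum_i\int\rho_N\,\nabla_{x_i}\log(\rho_N/G_N)\cdot\phi(x_i)$, not $-\sigma\sum_i\cdots$), since the residual $r_N$ must close exactly against the $|\phi|^2$ piece of the square.
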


\begin{remark}
	Note that 
	\[
	\nabla \log \frac{ \bar \rho }{G_{\bar \rho}} (x) = \nabla \log \bar \rho(x) + \frac{1}{\sigma} \nabla V \star \bar \rho(x). 
	\]
	Taking the right derivatives of this term will exactly cancel in the evolution of our modulated free energy the divergence term $\udiv K = - \Delta V$, that is otherwise present in the calculations in the time evolution of $\mathcal{H}_N(\rho_N \vert \bar \rho_N)$. 
\end{remark} 

\begin{proof}
First of all we write $\rho_N$ as a solution to the variant diffusion equation
\[
  \partial_t \frac{\rho_N}{G_N} -\frac{\sigma}{G_N}\,{\rm div} \left(G_N \nabla \frac{\rho_N}{G_N}\right) = 0.
  \]
  We also try to put $\bar\rho_N$ under this form. Of course we have that
  \begin{equation}
\partial_t \bar\rho={\sigma}\,{\rm div}_{x} \left(G_{\bar\rho}\,\nabla \frac{\bar\rho}{G_{\bar\rho}}\right)={\sigma}\,{\rm div}_{x} \left(\bar\rho\,\nabla \log\frac{\bar\rho}{G_{\bar\rho}}\right),\label{selfadjointbarrho}
  \end{equation}
  which we can tensorize trivially into
  \[
\partial_t {\bar\rho_N} -\sigma\,{\rm div} \left(G_{\bar\rho_N} \nabla \frac{\bar\rho_N}{G_{\bar\rho_N}}\right)=0.
  \]
  So we just write
  \[
 \partial_t \frac{\bar\rho_N}{G_{\bar\rho_N}} -\frac{\sigma}{G_N}\,{\rm div} \left(G_N \nabla \frac{\bar\rho_N}{G_{\bar\rho_N}}\right)=R_N, 
   \]
   where
   \[
   \begin{split}
     R_N=&-\frac{\sigma}{G_N}\,\sum_i {\rm div}_{x_i} \left(G_N\,\nabla_{x_i} \frac{\bar\rho_N}{G_{\bar\rho_N}}\right)+\frac{\sigma}{G_{\bar\rho_N}}\,\sum_i {\rm div}_{x_i} \left(G_{\bar\rho_N}\,\nabla_{x_i} \frac{\bar\rho_N}{G_{\bar\rho_N}}\right)\\
     &+\bar\rho_N \partial_t \frac{1}{G_{\bar\rho_N}}.
   \end{split}
   \]
   We now recall the classical entropy-entropy dissipation inequality for self-adjoint diffusion equations: Consider two solutions $u_i$, $i=1,\;2$ to
   \[
\partial_t u_i-\frac{1}{M(x)}\,\mbox{div}_x\left(M(x)\,\nabla_x u_i \right)=0.
\]
Then one has that by differentiating and integrating by parts
\[
\begin{split}
&\frac{d}{dt} \int u_1\,\log \frac{u_1}{u_2}\,M(x)\,dx=\int \left(\left(1+\log \frac{u_1}{u_2}\right)\,\partial_t u_1-\frac{u_1}{u_2}\,\partial_t u_2\right)\,M(x)\,dx\\
&\quad =-\int \left(\nabla_x u_1\cdot \nabla_x \log \frac{u_1}{u_2}-\nabla_x \frac{u_1}{u_2}\cdot\nabla_x u_2\right)\,M(x)\,dx.
\end{split}
\]
By re-arranging the terms, we obtain the usual
\[
\frac{d}{dt} \int u_1\,\log \frac{u_1}{u_2}\,M(x)\,dx=-\int u_1\,M(x)\,\left|\nabla_x \log \frac{u_1}{u_2}\right|^2\,dx.
\]
In our case, this gives immediately that
\begin{equation}
\begin{split}
\frac{d}{dt} E_N
=& -\frac{\sigma}{N}\,\int_0^t \int_{\Pi^{d\,N}} \rho_N\,\left|\nabla\log \frac{\rho_N}{\bar\rho_N}-\nabla\log \frac{G_N}{G_{\bar\rho_N}}\right|^2 \\
& \hskip3cm -\frac{1}{N}\int \frac{\rho_N}{\bar\rho_N}\,G_{\bar\rho_N}\,R_N.
\end{split}\label{dtEN1}
\end{equation}
So the whole point is to handle correctly the terms with $R_N$. Let us start with
just expanding the divergence terms in $R_N$ and getting the trivial cancellation
\begin{equation}
\begin{split}
     R_N= & \sigma\,\sum_i (\nabla_{x_i} \log G_{\bar\rho_N}-\nabla_{x_i} \log G_N)\cdot\nabla_{x_i} \frac{\bar\rho_N}{G_{\bar\rho_N}} \\
  & +\bar\rho_N \partial_t \frac{1}{G_{\bar\rho_N}}.
   \end{split} \label{RN}
\end{equation} 
Hence the remainder above just reads
\[
\begin{split}
  r_N=&\frac{1}{N}\int \frac{\rho_N}{\bar\rho_N}\,G_{\bar\rho_N}\,R_N=\frac{\sigma}{N}\int \rho_N\,\sum_i \nabla_{x_i}\log \frac{\bar \rho_N}{G_{\bar\rho_N}}\cdot\nabla_{x_i}\log \frac{G_{\bar\rho_N}}{G_N}\\
  &-\frac{1}{N}\int \rho_N \partial_t \log G_{\bar\rho_N}.  
\end{split}
\]
This is of course directly
\[
\begin{split}
  r_N=&\frac{1}{N}\int \rho_N\,\int_{\Pi^{2d}} \nabla V(x-y)\cdot\nabla_x \log \frac{\bar \rho_N}{G_{\bar\rho_N}}(x)\,\mu_N(dx)\,(\mu_N-\bar\rho)(dy) \\
  &-\frac{1}{N}\int \rho_N \int_{\Pi^d} \mu_N(dx) \partial_t \log G_{\bar\rho}.  
\end{split}
\]
Use now \eqref{selfadjointbarrho} to get that
\[
\begin{split}
  &\partial_t \log G_{\bar\rho}=-\frac{1}{\sigma}\,V\star\partial_t\bar\rho+\frac{1}{\sigma}\int V\star\bar\rho \partial_t\bar\rho=-\nabla V\star(\bar\rho\,\phi)-\int \nabla V\star\bar\rho\,\bar\rho \phi,
  \end{split}
\]
where we denote $\phi(x)=\nabla_x \log  \bigl({\bar \rho_N}/{G_{\bar\rho_N}}(x)\bigr)$.

If we insert this into $r_N$, we get that
\[
\begin{split}
  r_N=&\frac{1}{N}\int \rho_N\,\int_{\Pi^{2d}} \nabla V(x-y)\cdot \Big(\phi(x)\,\mu_N(dx)\,(\mu_N-\bar\rho)(dy)\\ &\quad +\phi(y)\,\mu_N( dx)\,\bar\rho(dy) +\phi(x)\,\bar\rho(dx)\,\bar\rho(dy)\Big). \\
  \end{split}
\]
It just remains to symmetrize in $x$ and $y$ to obtain 
\begin{equation}
\begin{split}
 & r_N  = \frac{1}{N} \int_0^t\int \frac{\rho_N}{\bar\rho_N} G_{\bar\rho_N} R_N  \\
  & = \frac{1}{2N}\int \rho_N\,\int_{\Pi^{2d}} \nabla V(x-y)  \\
  &\quad  \cdot (\phi(x)-\phi(y))(\mu_N(dx)\,\mu_N(dy)-2\mu_N(dy)\,\bar\rho(dx)+\bar\rho(dx)\,\bar\rho(dy)),  
\end{split} \label{rN}
\end{equation}
which concludes.
\end{proof}

%%%%%%%%%%%%%%%%%%%%%%%%%%%%%%%
\subsection{Modulated free energy control for non-smooth $V$} \label{energynonsmoothV}
%%%%%%%%%%%%%%%%%%%%%%%%%%%%%%%%%%%%%%       
We prove here an equivalent of Prop. \ref{modulatedsmooth} for realistic, singular potentials $V$, which finally implies \eqref{propfreeenergy}.
\begin{Prop}\label{modulatedweak}
  Assume that $V$ is an even function,   and that $\rho_N$ is an entropy solution to \eqref{liouvilleN} in the sense of definition \ref{defentro} below and $\bar \rho$ smooth  solves \eqref{MFlimit}. Then the modulated free energy defined by \eqref{modulateddef} satisfies that
\begin{equation}
\begin{split}
& E_{N}\left(\frac{\rho_N}{ G_{N}}\,\vert\;\frac{\bar\rho_N}{G_{\bar\rho_N}}\right)(t)
  \\
  &\ \le E_{N}\left(\frac{\rho_N}{ G_{N}}\,\vert\;\frac{\bar\rho_N}{G_{\bar\rho_N}}\right)(0)
 -\frac{\sigma}{N}\,\int_0^t\int_{\Pi^{d\,N}} \rho_N\,\left|\nabla\log \frac{\rho_N}{\bar\rho_N}-\nabla\log \frac{G_N}{G_{\bar\rho_N}}\right|^2\\
&\ -\frac{1}{2} \int_0^t\int_{\Pi^{dN}} \int_{\Pi^{2\,d}\cap \{x\neq y\}}  \nabla V(x-y) \cdot \left(\nabla\log \frac{\bar\rho}{G_{\bar \rho}}(x)
 - \nabla\log \frac{\bar\rho}{G_{\bar \rho}}(y)\right)\\
 &\hspace{250pt}(d\mu_N - d\bar\rho)^{\otimes 2} d\rho_N,
\end{split} \label{ModulatedFreeEnergy}
\end{equation}
where $\mu_N=\frac{1}{N}\,\sum_{i=1}^N \delta(x-x_i)$ is  the empirical measure.
\end{Prop}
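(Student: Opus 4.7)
The strategy is to mimic the smooth calculation of Proposition \ref{modulatedsmooth} while handling two simultaneous difficulties: the singularity of $V$ and the fact that $\rho_N$ is only an entropy solution. The main idea is to split the modulated free energy as
\[
E_N\Big(\tfrac{\rho_N}{G_N}\,\big|\,\tfrac{\bar\rho_N}{G_{\bar\rho_N}}\Big)
\;=\; \mathcal{H}_N(\rho_N\,|\,G_N) \;-\; \frac{1}{N}\int_{\Pi^{dN}} \rho_N(t,X^N)\,\log\frac{\bar\rho_N(t,X^N)}{G_{\bar\rho_N}(t,X^N)}\,dX^N,
\]
so that the first term's evolution is controlled directly by the entropy inequality in Definition \ref{defentro}, which already contributes exactly the dissipation $(\sigma/N)\sum_i \int \rho_N|\nabla_{x_i}\log(\rho_N/G_N)|^2$ appearing in \eqref{ModulatedFreeEnergy}. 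The second term will be handled by testing the Liouville equation \eqref{liouvilleN} against a suitable smooth function.

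The crucial observation is that
\[
\log\frac{\bar\rho_N(t,X^N)}{G_{\bar\rho_N}(t,X^N)}
 \;=\; \sum_{i=1}^N \psi(t,x_i) \;-\; \frac{N}{2\sigma}\int_{\Pi^d} V\star\bar\rho\,\bar\rho,
\qquad
\psi := \log\bar\rho + \frac{1}{\sigma}\,V\star\bar\rho,
\]
factors as a sum of single-particle terms plus a spatial constant. Under our hypotheses $\bar\rho \in W^{2,\infty}$ with $\inf\bar\rho>0$ and $|\nabla V|\le C/|x|\in L^1(\Pi^d)$ (for $d\ge 2$), one checks that $\psi \in W^{2,\infty}(\Pi^d)$; in particular $\phi := \nabla\psi = \nabla\log\bar\rho + \sigma^{-1}\nabla V\star\bar\rho$ is bounded and Lipschitz, which is precisely what ensures that $\nabla V(x-y)\cdot(\phi(x)-\phi(y))$ remains uniformly bounded on $\Pi^{2d}\setminus\{x=y\}$. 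Taking $\Phi(t,X^N) := \log(\bar\rho_N/G_{\bar\rho_N})$ as a test function in \eqref{liouvilleN}, invoking \eqref{selfadjointbarrho} to compute $\partial_t\psi$, and then symmetrizing in $x$ and $y$ exactly as in the last step of Proposition \ref{modulatedsmooth}, one recovers the remainder $r_N$ under the form \eqref{rN}, which is exactly the third term on the right-hand side of \eqref{ModulatedFreeEnergy}.

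To make the distributional testing rigorous, I would regularize $V \mapsto V_\varepsilon := V\star\chi_\varepsilon \in C^\infty$, carry out the calculation above for the mollified objects $V_\varepsilon,\psi_\varepsilon,\phi_\varepsilon$ (where everything is classical), and pass to the limit $\varepsilon\to 0$. The passage relies on (i) strong convergence $V_\varepsilon\to V$ in $L^p$ and $\nabla V_\varepsilon\to \nabla V$ pointwise off the diagonal, (ii) the uniform Fisher information bound from the entropy dissipation, which provides equi-integrability of $\rho_N$ on neighborhoods of $\{x_i=x_j\}$, and (iii) the uniform bound on $\nabla V(x-y)\cdot(\phi(x)-\phi(y))$ just mentioned, which controls the symmetrized remainder $r_N$ in the limit.

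The principal obstacle is the limit of the singular convective contribution $\int \rho_N\,\phi_\varepsilon(x_i)\cdot\nabla V_\varepsilon(x_i-x_j)/N$: identifying this with the intended pairing against $(d\mu_N - d\bar\rho)^{\otimes 2}$ requires combining the $L^p$-integrability of $V$ with the entropy-dissipation estimate, exploiting the cancellation structure of the symmetrized expression \eqref{rN}. The resulting inequality (rather than equality) stems precisely from the $\le$ in Definition \ref{defentro} after absorbing the entropy dissipation, and the exclusion $\{x\ne y\}$ appears because the $i=j$ diagonal contributions of $\mu_N^{\otimes 2}$ correspond to the self-interactions already removed from $G_N$ in \eqref{Gibbs}.
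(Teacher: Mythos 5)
Your overall architecture matches the paper's: split $E_N$ into the part $\frac1N\int\rho_N\log(\rho_N/G_N)$ controlled by Definition~\ref{defentro}, and the cross term $-\frac1N\int\rho_N\log(\bar\rho_N/G_{\bar\rho_N})$, test the Liouville equation against the smooth function $\log(\bar\rho_N/G_{\bar\rho_N})$, use \eqref{selfadjointbarrho}, and symmetrize to obtain \eqref{rN}. You also correctly observe that $V\star\bar\rho\in W^{2,\infty}$ (so the test function is smooth) and that $\nabla V(x-y)\cdot(\phi(x)-\phi(y))$ is bounded. So far so good.

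The gap is your plan for making the distributional testing rigorous. You propose to mollify $V\mapsto V_\varepsilon$ and "carry out the calculation above for the mollified objects." But the hypothesis is that $\rho_N$ is an entropy solution for the \emph{given} singular $V$: if you mollify $V$ inside the Liouville equation, $\rho_N$ no longer solves it, and if you only mollify the test function $\log(\bar\rho_N/G_{\bar\rho_N})$, the mollification is vacuous since that function is already smooth. Neither reading makes the argument close. The paper instead avoids any regularization of $V$: it observes that the entropy dissipation directly gives, via Cauchy--Schwarz,
\[
\sum_{i}\int_0^T\!\!\int_{\Pi^{dN}} G_N\Big|\nabla_{x_i}\frac{\rho_N}{G_N}\Big|^2 \,\frac{1}{\rho_N/G_N}\,<\infty
\quad\Longrightarrow\quad
G_N\,\nabla_{x}\frac{\rho_N}{G_N}\in L^1\big([0,T]\times\Pi^{dN}\big),
\]
so the equation in the compact form $\partial_t\rho_N=\sigma\,\mathrm{div}\big(G_N\nabla(\rho_N/G_N)\big)$ already makes sense as a distribution and can be tested directly against $\log(\bar\rho_N/G_{\bar\rho_N})$. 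Your "equi-integrability near the diagonal" does not substitute for this; the precise $L^1$ control on $G_N\nabla(\rho_N/G_N)$ is what you need, both for the testing and for the treatment of the singular term $-\sigma\sum_i\nabla_{x_i}\log G_N\cdot\nabla_{x_i}(\bar\rho_N/G_{\bar\rho_N})$ appearing in the remainder $R_N$, which the paper then rewrites as $-\sigma\sum_i\nabla_{x_i}\rho_N\cdot\nabla_{x_i}\log(\bar\rho_N/G_{\bar\rho_N})+\sigma\sum_i G_N\nabla_{x_i}(\rho_N/G_N)\cdot\nabla_{x_i}\log(\bar\rho_N/G_{\bar\rho_N})$ — two well-defined pairings because the second factor is smooth. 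This precise bookkeeping, rather than a regularization-and-limit scheme, is what is missing from your proposal.
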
 
\begin{proof}  
  We first note that the entropy estimate provides some useful {\em a priori} estimates. In particular from the entropy dissipation \eqref{entropycontrol}, we have that 
\begin{equation}
\begin{split}
&   \sum_{i=1}^N\int_0^T \int_{\Pi^{dN}} 
    \rho_N(s,X^N) \Bigl|\nabla_{x_i} \log \bigl(\frac{\rho_N(s,X^N)}{G_N}\bigr) \Bigr|^2  dX^N ds \\
& = 
 \sum_{i=1}^N \int_0^T\int_{\Pi^{dN}}
  G_N\,|\nabla_{x_i} \frac{\rho_N(s,X^N)}{G_N}|^2dX^N\,ds <+\infty 
\end{split}
\end{equation}
where $X^N= (x_1,\cdots,x_N)$.
This implies that $G_N\,\nabla_x \frac{\rho_N}{G_N}\in L^1([0,\ T]\times \Pi^{dN})$ and it allows to give meaning to Eq.~\eqref{liouvilleN} in the sense of distribution as it can be rewritten as
\begin{equation}\label{compactform}
\partial_t \rho_N-\sigma\,\mbox{div} \Big(G_N\,\nabla \frac{\rho_N}{G_N}\Big)=0.
\end{equation}
The proof of Prop. \ref{modulatedweak} follows the same path in our previous formal derivation. We of course start with the entropy control \eqref{entropycontrol} satisfied by the entropy solution which takes care of the terms in $dE_N/dt$ that are non-linear in $\rho_N$.

Then we recall that both $\bar\rho$ and $G_{\bar\rho}$ are smooth and non-vanishing. As we have previously seen, they satisfy
  \[
 \partial_t \frac{\bar\rho_N}{G_{\bar\rho_N}} -\frac{\sigma}{G_N}\,{\rm div} \Big(G_N \nabla \frac{\bar\rho_N}{G_{\bar\rho_N}}\Big)=R_N
   \] 
with $R_N$ given by \eqref{RN} and therefore satisfies
\begin{equation}
\begin{split}
 \partial_t \Big(\rho_N \log \frac{\bar\rho_N}{G_{\bar\rho_N}}\Big)
&  - \Big(\log \frac{\bar\rho_N}{G_{\bar\rho_N}}\Big) \partial_t \rho_N  \\
 &  -\sigma \frac{\rho_N G_{\bar\rho_N} }{G_N \bar\rho_N}\,{\rm div} \Big(G_N \nabla \frac{\bar\rho_N}{G_{\bar\rho_N}}\Big)=R_N \rho_N \frac{G_{\bar\rho_N}}{\bar\rho_N}.
\end{split}\label{limit}
\end{equation}
We note that all terms above are well defined in the sense of distribution with for example
\[
\begin{split}
&\frac{\rho_N G_{\bar\rho_N} }{G_N \bar\rho_N}\,{\rm div} \Big(G_N \nabla \frac{\bar\rho_N}{G_{\bar\rho_N}}\Big)={\rm div}\,\Big(\frac{\rho_N G_{\bar\rho_N} }{\bar\rho_N}\, \nabla \frac{\bar\rho_N}{G_{\bar\rho_N}}\Big)-\rho_N\,\nabla \frac{\bar\rho_N}{G_{\bar\rho_N}}\cdot \nabla \frac{G_{\bar\rho_N}}{\bar\rho_N}\\
&\qquad -G_N\, \nabla \frac{\rho_N}{G_N}\cdot\nabla \log \frac{\bar\rho_N}{G_{\bar\rho_N}}.
\end{split}
\]
Similarly the only non-smooth term in $R_N$ is
\[
-\sigma\,\sum_i \nabla_{x_i} \log G_N\cdot\nabla_{x_i} \frac{\bar\rho_N}{G_{\bar\rho_N}}=-\sigma\,\sum_i \frac{1}{G_N}\,\nabla_{x_i} G_N\cdot\nabla_{x_i} \frac{\bar\rho_N}{G_{\bar\rho_N}},
\]
which, once multiplied against $\rho_N\, \frac{G_{\bar\rho_N}}{\bar\rho_N}$, can be rewritten as
\[
-\sigma\,\sum_i \nabla_{x_i} \rho_N\cdot\nabla_{x_i} \log \frac{\bar\rho_N}{G_{\bar\rho_N}}+\sigma\,\sum_i G_N \nabla_{x_i} \frac{\rho_N}{G_N}\cdot\nabla_{x_i} \log \frac{\bar\rho_N}{G_{\bar\rho_N}}.
\]
It remains to use \eqref{compactform} and test it with $\log \bigl(\bar\rho_N/G_{\bar\rho_N}\bigr)$
to get after commuting space derivatives
\begin{equation}\nonumber
\begin{split}
 & \int_0^t\int - \Big(\log \frac{\bar\rho_N}{G_{\bar\rho_N}} \Big) \partial_t \rho_N   
  -\sigma  \int_0^t \int \frac{\rho_N G_{\bar\rho_N} }{G_N \bar\rho_N}\,{\rm div} \Big(G_N \nabla \frac{\bar\rho_N}  {G_{\bar\rho_N}}\Big)  \\ 
 & \hskip.5cm =  \sigma \bigg(- \int_0^t\int \rho_N |\nabla_x \log \frac{G_{\bar\rho_N}}{\bar\rho_N}|^2
 + 2 \int_0^t\int  \frac{G_N G_{\bar\rho_N}}{\bar\rho_N} \nabla \frac{\bar\rho_N}{G_{\bar\rho_N}}
 \cdot \nabla \frac{\rho_N}{G_N}
 \bigg).
\end{split} 
\end{equation}
Note that everything is well defined due to the control 
 $G_N\,\nabla_x \bigl({\rho_N}/{G_N}\bigr) \in L^1$.
It remains now to collect \eqref{entropycontrol}  with \eqref{limit} integrated in space and time on $(0,t)$
using identity \eqref{rN} to conclude the proposition.
\end{proof}

%%%%%%%%%%%%%%%%%%%%%%%
\subsection{Large range contribution: proof of Lemma \ref{regular}\label{secregular}}
%%%%%%%%%%%%%%%%%%%%%%%%%%%%%%%%%%%%%%%%%%%%%%%%%%%%%%
We mostly perform a direct calculation, with any special need to use the structure of the dynamics as before. First of all
  \[
  \begin{split}
    &\frac{1}{N}\int_{\Pi^{dN}} \rho_N \log \frac{G^W_{\bar \rho_N}}{G^W_N}\,dX^N==\frac{1}{2\,\sigma}\,\int\rho_N \int_{\Pi^{2d}} W(x-y)\,(d\mu_N-d\bar\rho)^{\otimes 2}\,dX^N\\
    &\ =\frac{1}{2\,N^2\,\sigma}\sum_{i,j} \int \rho_N\,\left(W(x_i-x_j)-W\star\bar\rho(x_i)-W\star\bar\rho(x_j)+\int \bar\rho\,W\star\bar \rho\right)\\
  \end{split}
  \]
  Therefore using the dynamics
  \[
  \begin{split}
    &\frac{d}{dt}\frac{1}{N}\int_{\Pi^{dN}} \rho_N \log \frac{W_{\bar \rho_N}}{W_N}\,dX^N\\
    &\ =\frac{1}{2\,N^2}\,\sum_{i,j}\int \rho_N \left(2\,\Delta W(x_i-x_j)-\Delta W\star\bar\rho(x_i)-\Delta W\star\bar \rho(x_j)\right)\\
    &\quad+\frac{1}{2\,N^2}\,\sum_{i,j}\int \rho_N \left(2\,\int \bar\rho\,\Delta W\star \bar\rho-\Delta W\star\bar\rho(x_i)-\Delta W\star\bar \rho(x_j)\right)\\
    &\quad-\frac{1}{N^3\,\sigma}\sum_{i,j,k} \int \rho_N\,\left(\nabla W(x_i-x_j)-\nabla W\star\bar\rho(x_i)\right)\cdot \nabla V(x_i-x_k)\\
    &\quad - \frac{1}{2\,N^2\,\sigma}\,\sum_{i,j} \int \rho_N\,\Big(2\,\int \nabla V\star \bar\rho\,\bar\rho\,\nabla W\star \bar\rho + \nabla W\star (\bar\rho\,\nabla V\star \bar\rho)(x_i)\\
    &\qquad\qquad-\nabla W\star (\bar\rho\,\nabla V\star \bar\rho)(x_j)\Big).
  \end{split}
  \]
  Now we just have to symmetrize as before, with first
 \[
 \begin{split}
&\frac{1}{2\,N^2}\,\sum_{i,j}\int \rho_N \left(2\,\Delta W(x_i-x_j)-\Delta W\star\bar\rho(x_i)-\Delta W\star\bar \rho(x_j)\right)\\
   &\ +\frac{1}{2\,N^2}\,\sum_{i,j}\int \rho_N \left(2\,\int \bar\rho\,\Delta W\star \bar\rho-\Delta W\star\bar\rho(x_i)-\Delta W\star\bar \rho(x_j)\right)\\
   &\quad=\int \rho_N\,\int_{\Pi^{2d}} \Delta W(x-y)\,(d\mu_N-d\bar\rho)^{\otimes 2}\,dX^N.
 \end{split}
 \]
 Similarly
  \[
 \begin{split}
   &-\frac{1}{N^3\,\sigma}\sum_{i,j,k} \int \rho_N\, ( \nabla W(x_i-x_j)-\nabla W\star\bar\rho(x_i))\cdot \nabla V(x_i-x_k)\\
   &\quad=-\frac{1}{\sigma} \int \rho_N\,\int_{\Pi^{2d}} \int_{\Pi^d} \nabla W(z-x)\cdot\nabla V(z-y)\,\ud \mu(z) \,(d\mu_N-d\bar\rho)^{\otimes 2}\\
   &\quad\quad -\frac{1}{N^2\,\sigma}\sum_{i,j} \int \rho_N\,\left(\nabla W(x_i-x_j)-\nabla W\star\bar\rho(x_i)\right)\cdot \nabla V\star\bar\rho(x_i).
 \end{split}
 \]
 Since $\nabla W$ is odd, 
  \[
 \begin{split}
   & \frac{1}{N^2}\,\sum_{i,j} \int \rho_N\,\nabla W\star (\bar\rho\,\nabla V\star \bar\rho)(x_i)\\
   &\quad =\int \rho_N\,\int_{\Pi^d} \nabla W\star (\bar\rho\,\nabla V\star \bar\rho)(x)\,\mu_N(dx)\\
   &\quad=-\int \rho_N\,\int_{\Pi^d}  \bar\rho\,\nabla V\star \bar\rho(z)\,\nabla W\star\mu_N(z)\,dz.
\end{split}
 \]
 Hence one also has
  \[
 \begin{split}
   & -\frac{1}{N^2\,\sigma}\sum_{i,j} \int \rho_N\,\left(\nabla W(x_i-x_j)-\nabla W\star\bar\rho(x_i)\right)\cdot \nabla V\star\bar\rho(x_i)\\
&- \frac{1}{2\,N^2\,\sigma}\,\sum_{i,j} \int \rho_N\,\Big(2\,\int \nabla V\star \bar\rho\,\bar\rho\,\nabla W\star \bar\rho + \nabla W\star (\bar\rho\,\nabla V\star \bar\rho)(x_i)\\
&\qquad\qquad + \nabla W\star (\bar\rho\,\nabla V\star \bar\rho)(x_j)\Big)\\
   &\quad=\frac{1}{\sigma}\,\int \rho_N\int_{\Pi^d} \nabla V\star \bar \rho(z)\,\Big((\nabla W\star \bar\rho - \nabla W\star\mu_N)\,\mu_N(dz) - \nabla W\star\bar\rho\,\bar\rho\,dz\\
   &\qquad\qquad + \nabla W\star\mu_N\,\bar\rho\,dz\Big)\\
   &\quad=-\frac{1}{\sigma}\,\int \rho_N\int_{\Pi^d} \nabla V\star \bar \rho(z)\,(\nabla W\star\mu_N-\nabla W\star \bar\rho)\,(\mu_N(dz)-\bar\rho\,dz),
 \end{split}
 \]
 which by an symmetrization equals to 
   \[
    -\frac{1}{2\,\sigma}\,\int \rho_N\int_{\Pi^{2d}} \nabla W(x-y)\,(\nabla V\star \bar \rho(x)-\nabla V\star\bar\rho(y))\,(\mu_N-\bar\rho)^{\otimes 2},
 \]
 and summing up all terms concludes the proof.
 %%%%%%%%%%%%%%%%%%%%%%%%%%%%%%%%%%%%%%%%%%%%%%%%%%%%%
%%%%%%%%%%%%%%%%%%%%%%%%%%%%%%%%%%%%%%%%%%%%%%%%%%%%%%
\section{Large deviation type estimates\label{LDE}}
%%%%%%%%%%%%%%%%%%%%%%%%%%%%%%%%%%%%%%%%%%%%%%%%%%%%%%%
The main goal of this section is to prove Prop \ref{prop3.1}, namely to derive a large deviation inequality on the functional
\[
\gamma\,\int_{\Pi^{dN}} F(\mu_N)\,\rho_N \,dX^N,
\]
where we recall that
\[
F(\mu)= - \int_{\Pi^{2d}\cap\{x\neq y\}} V_0(x-y)\,( d \mu- d \bar\rho)^{\otimes2}(x, y),
\]
and for $V_0$ satisfying the assumptions of \eqref{Flogexample}.

Classical large deviations approaches typically attempt at the limit of Gibbs equilibrium, see \cite{Arous,Var} for example. In contrast, we only care here about bounds on corresponding quantities. The estimates are also made more delicate since  $V_0(x)$ is allowed to be singular at $x=0$, \eqref{Flogexample} only imposes $V_0(x)\geq \log |x|\,\chi(|x|/\eta)$, and  we want to treat the best possible constant $\gamma$: any $\gamma<d$.

The main strategy that we follow is hence to try to remove the singularity in $V_0$ by carving out those $|x-y|<\eps$ for some $\eps$. This process is obviously the most delicate part and is carried out in subsection~\ref{proofprop3.1}.

This does not completely resolve the estimate though as it is still necessary to obtain a good quantitative control on the remaining functional in terms of $\eps$. The derivation of such a control forces us to revisit more classical large deviation approaches in subsections \ref{entropyLDE} and \ref{regularLDE}. 
%%%%%%%%%%%%%%%%%%%%%%%%%%%%%%%%%%%%%%%%%%%%%%%%%%%%%%%%%
\subsection{An explicit basic large deviation estimate\label{entropyLDE}} 
%%%%%%%%%%%%%%%%%%%%%%%%%%%%%%%%%%%%%%%%%%%%%%%%%%%%%%%%
%
For any $M>0$, define a decomposition of the torus $\Pi^d$ into $M^d$ 
 disjoint hypercubes $C_k^M$, $k=1,\dots, M^d$, of size $1/M$. We then denote 
\[
L_M(x,y)=M^{d}\,\ind_{x,\;y\in C_k^M},\quad L_M[f](x)=\int_{\Pi^d} L_M(x,y)\,f(dy).
\]
Note that for  $M\to \infty$, $L_M$ is an approximation of the Dirac mass as for $f$ Lipschitz
\[
|f(x)-L_M[f](x)|\leq \int_{\Pi^d} L_M(x,y)\,|f(x)-f(y)|\,dy\leq  C \|\nabla f\|_{L^\infty}\,\frac{1}{M}.
\]
The kernel $L_M$ makes it relatively straightforward to use elementary combinatorics for large deviation purposes. In particular we can derive the following estimate that we will make use of later.
\begin{Prop}
  There exists a constant $C_d$ s.t. for any $\bar \rho\in L^1(\Pi^d)$, one has the exponential bound for $M^d\leq N/2$,
  \[\begin{split}
  C^{-M^{d}}\,\frac{N^{M^{d}/2-1/2}}{M^{d\,M^d - d/2}}\leq&\int_{\Pi^{d\,N}} e^{N\,\int_{\Pi^d}\mu_N(dx)\,\log \frac{L_M[\mu_N](x)}{\bar\rho(x)}}\,\bar\rho_N\,dx_1\dots dx_N\\
  &\qquad\qquad\leq C\, N^{M^d+1/2}. 
\end{split}  \]\label{largedeviationbasic}
\end{Prop}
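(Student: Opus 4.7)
The plan is to reduce the integral to an explicit combinatorial sum over occupancy vectors and then analyze that sum via Stirling's formula. Let $n_k = \#\{i : x_i \in C_k^M\}$. Since $L_M[\mu_N]$ equals $M^d n_k/N$ on $C_k^M$, we have $N\int \mu_N\log(L_M[\mu_N]/\bar\rho) = \sum_i \log L_M[\mu_N](x_i) - \sum_i \log \bar\rho(x_i)$, so that the weight $\bar\rho_N = \prod_i \bar\rho(x_i)$ exactly cancels the $\prod_i \bar\rho(x_i)^{-1}$ coming from the exponential. Grouping the integration by the occupancy vector $\vec n$ (each cube has volume $M^{-d}$) gives the identity
\[
I \;=\; \sum_{\vec n:\,\sum n_k = N} \binom{N}{n_1,\ldots,n_{M^d}}\,\prod_k (n_k/N)^{n_k} \;=\; \frac{N!}{N^N}\sum_{\vec n}\prod_k \frac{n_k^{n_k}}{n_k!},
\]
in which all dependence on $\bar\rho$ has disappeared, so the resulting bounds will automatically be uniform in $\bar\rho$.

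For the upper bound, the elementary inequality $n^n \leq e^n n!$ gives $\prod_k n_k^{n_k}/n_k! \leq e^N$; combined with $N!/N^N \leq C\sqrt N\,e^{-N}$ from Stirling, each summand is at most $C\sqrt N$. The number of compositions of $N$ into $M^d$ nonnegative parts is $\binom{N+M^d-1}{M^d-1}\leq (2N)^{M^d-1}$ under the hypothesis $M^d \leq N/2$, which delivers $I \leq C\,N^{M^d+1/2}$ after the factor exponential in $M^d$ is absorbed into the constant (consistently with the $C^{-M^d}$ appearing on the lower side).

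For the lower bound I would restrict the sum to $\vec n$ with all $n_k \geq 1$ and apply the full Stirling approximation termwise to obtain
\[
\binom{N}{\vec n}\prod_k (n_k/N)^{n_k} \;\sim\; \frac{\sqrt{2\pi N}}{(2\pi)^{M^d/2}\,\prod_k \sqrt{n_k}}.
\]
The remaining task is to lower-bound $\sum_{\vec n\geq 1}\prod_k n_k^{-1/2}$ under $\sum n_k = N$, which I would approximate by the Dirichlet-type continuous integral $\int_{\{\sum n_k = N\}}\prod dn_k/\sqrt{n_k}$; substituting $n_k = N p_k$ and applying the standard Dirichlet identity $\int_\Delta \prod p_k^{-1/2} = \pi^{M^d/2}/\Gamma(M^d/2)$ evaluates this integral to $\pi^{M^d/2}N^{M^d/2-1}/\Gamma(M^d/2)$. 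Multiplying by the prefactor yields $I \gtrsim N^{M^d/2-1/2}/(2^{M^d/2}\Gamma(M^d/2))$, and the crude bound $\Gamma(M^d/2) \leq (M^d)^{M^d/2}$ then delivers the stated lower bound $C^{-M^d}\,N^{M^d/2-1/2}/M^{dM^d-d/2}$ with ample slack. A more structural alternative would be to use Lagrange inversion with the tree function $T(z)=ze^{T(z)}$ to recognize $\sum_{\vec n}\prod_k n_k^{n_k}/n_k! = [z^N](1-T(z))^{-M^d}$ and extract asymptotics from the square-root singularity of $T$ at $z=1/e$.

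The main obstacle is making the Stirling approximation and the Riemann-sum-to-integral comparison effective uniformly in $M^d$, which is allowed to be as large as $N/2$. In that regime many of the $n_k$ can be $O(1)$, where the standard relative error in Stirling is not small and the comparison with a smooth continuous density degrades. My preferred way to handle this is to restrict the lower-bound sum to a balanced sub-simplex where $n_k \asymp N/M^d$ for every $k$ (which still carries the dominant contribution), so that Stirling applies with uniformly small multiplicative error, and to discard the sparse-occupancy contributions (whose loss is absorbed into the $C^{-M^d}$ prefactor). Everything else is bookkeeping with elementary inequalities.
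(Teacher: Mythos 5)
Your combinatorial reduction is exactly the paper's: reduce to $\bar\rho\equiv 1$, write the integrand as $\prod_i L_M[\mu_N](x_i)$, group by the occupancy vector $\vec n$, and use the multinomial count of $\Omega(\vec n)$ to arrive at the identity
\[
Z_{N,M}=\frac{N!}{N^N}\sum_{\vec n:\,\sum n_k=N}\prod_k \frac{n_k^{n_k}}{n_k!}.
\]
This is the heart of the proof and you have it right. The bounding of the sum is where you diverge, and there is one genuine gap on the upper bound side.

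\textbf{Upper bound.} You bound each summand by $C\sqrt N$ (same as the paper) and then multiply by the crude composition count $\binom{N+M^d-1}{M^d-1}\le(2N)^{M^d-1}$. This introduces a factor $2^{M^d-1}$, which you propose to ``absorb into the constant.'' That is not legitimate: the statement asserts a constant $C=C_d$ \emph{independent of $M$ and $N$} on the upper side (the $C^{-M^d}$ only appears on the lower side), and $M^d$ is allowed to be as large as $N/2$, so $2^{M^d-1}$ can vastly exceed any fixed power of $N$. The paper avoids this by a full Stirling analysis of $\binom{N+K}{K}$ with $K=M^d-1$, in which the $K^K$ coming from $1/K!$ exactly kills the exponential factors: one gets $\binom{N+K}{K}\le C\,(N+K)^K\cdot(2e)^K/K^K\le C\,N^K$ because $(2e/K)^K$ is bounded uniformly over $K\ge 1$. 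With that sharper binomial estimate the upper bound follows with a universal constant. Your argument, as written, does not.

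\textbf{Lower bound.} Here you take a genuinely different and more analytic route: apply Stirling termwise, pass to a Dirichlet-type continuous integral over the simplex, and invoke $\int_\Delta\prod p_k^{-1/2}=\pi^{M^d/2}/\Gamma(M^d/2)$. The target inequality is weak enough that this would work, but the discrete-to-continuous comparison and the termwise Stirling approximation are not uniform over $\vec n$ with many $n_k=O(1)$; you flag this and propose restricting to a balanced sub-simplex, which is plausible but unproved. The paper's lower-bound argument is much shorter and avoids all of this: after reducing to $\sum_{\vec n}\prod_k(n_k+1)^{-1/2}$ with a uniform $C^{-M^d}$ prefactor (which is exactly where the $C^{-M^d}$ in the statement comes from), it just uses the trivial pointwise bound $(n_k+1)^{-1/2}\ge N^{-1/2}$ and then a Stirling lower bound on the single binomial coefficient. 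Nothing about continuum approximation or balanced configurations is needed. Your Lagrange-inversion remark via the tree function is a nice structural observation but is not carried out.

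In short: same reduction, but your upper bound has a real error (the $2^{M^d}$ is not absorbable), and your lower bound, while aimed at a sufficient estimate, replaces a one-line argument by an unfinished continuum approximation. Replacing $\binom{N+K}{K}\le(2N)^K$ by the Stirling estimate $\binom{N+K}{K}\le C\,N^K$ (valid for $K\le N/2$), and $\prod n_k^{-1/2}\ge N^{-M^d/2}$ in place of the Dirichlet integral, would align your proof with the paper's and close the gap.
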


\begin{remark} Note that the previous proposition will be used choosing $M$ in terms of the number of particles $N$ and the regularized parameter $\varepsilon$ to provide the quantitative large deviation type estimate given by Proposition~\ref{largedeviation}.
\end{remark} 

\begin{proof}[Proof of Prop. \ref{largedeviationbasic}]
  The first step is to simply reduce to the case $\bar\rho=1$ by observing that
  \[
  \begin{split}
    e^{N\,\int_{\Pi^d}\mu_N(dx)\,\log \frac{L_M[\mu_N](x)}{\bar\rho(x)}}\,\bar\rho_N&=\exp \bigg( \sum_i \log \frac{L_M[\mu_N](x_i)}{\bar\rho(x_i)}+\sum_i \log \bar\rho(x_i) \bigg) \\
    &=\exp\bigg(\sum_i \log L_M[\mu_N](x_i)\bigg),
\end{split}
  \]
  and therefore it is enough to bound
  \[
Z_{N,M}=\int_{\Pi^{d\,N}} e^{N\,\int_{\Pi^d}\mu_N(dx)\,\log L_M[\mu_N](x)}\,dx_1\dots dx_N.
\]
Of course, one may simply write
\[
\begin{split}
  Z_{N,M}&=\int_{\Pi^{d\,N}} e^{\sum_i \log L_M[\mu_N ](x_i)}\,dx_1\dots dx_N\\
  &=\int_{\Pi^{d\,N}} \Pi_i L_M[\mu_N ](x_i)\,dx_1\dots dx_N.
\end{split}
\]
On the other hand, if $x_i\in C_k^M$
\[
L_M[\mu_N ](x_i)=\frac{M^d}{N}\,\#\{j\,|\;x_j\in C_k^M\}.
\]
This leads us to define, for any given $i$ the unique index $k(x_i)$ s.t. $x_i\in C_{k(x_i)}^M$, and for a given $k$ the number $n_k=\#\{j\,|\;x_j\in C_k^M\}$.
This simply gives
\[
Z_{N,M}=\frac{M^{d\,N}}{N^N}\,\int_{\Pi^{d\,N}} \Pi_i n_{k(x_i)}\,dx_1\dots dx_N.
\]
We can of course reverse the process and first choose any decomposition
\[
n_1+\ldots+n_{M^d}=N,\quad 0\leq n_k\leq N,
\]
and then denote by $\Omega=\Omega(n_1,\ldots,n_{M^d})$ the subset of $\Pi^{d\,N}$ such that $n_k=\# \{i\,|\;x_i\in C_k^M \}$ for all $k=1, \cdots, M^d$. Hence
\[\begin{split}
Z_{N,M}&=\frac{M^{d\,N}}{N^N}\,\sum_{n_1+\ldots+n_{M^d}=N} \Pi_k n_{k}^{n_k}\,\int_{\Omega(n_1,\ldots,n_{M^d})} \,dx_1\dots dx_N\\
&=\frac{M^{d\,N}}{N^N}\,\sum_{n_1+\ldots+n_{M^d}=N} \Pi_k n_{k}^{n_k}\,|\Omega(n_1,\ldots,n_{M^d})|. 
\end{split}
\]
It is relatively straightforward to evaluate $|\Omega(n_1,\ldots,n_{M^d})|$. We may first consider the reduced set $\Omega^r(n_1,\ldots,n_{M^d})$ where we assign particles to hypercubes based on their rank: Simply put $x_1,\ldots,x_{n_1}$ anywhere in $C_1^M$, $x_{n_1+1},\ldots,x_{n_1+n_2}$ anywhere in $C_2^M$ and so on...
Trivially
\[
|\Omega^r(n_1,\ldots,n_{M^d})|=M^{-d\,N}.
\]
On the other hand, up to a permutation $\tau$ of the indices, if $(x_1,\ldots,x_N)\in \Omega(n_1,\ldots,n_{M^d})$ then $(x_{\tau(1)},\ldots,x_{\tau(N)})\in \Omega^r(n_1,\ldots,n_{M^d})$, implying that
\[
\begin{split}
  &|\Omega(n_1,\ldots,n_{M^d})|\\
  &\quad =M^{-d\,N}\,\left|\left\{(k_1,\ldots k_N)\in \{1,\ldots,M^d\}^N\,|\;\forall l\ n_l=|\{i,\;k_i=l\}|\right\}\right|.
\end{split}
\]
We now recall the classical combinatorics results (see \cite{JaWa1} for instance) stating that
\begin{equation}
\left|\left\{(k_1,\ldots k_N)\in \{1,\ldots,M^d\}^N\,|\;\forall l\ n_l=|\{i,\;k_i=l\}|\right\}\right|=\frac{N!}{n_1!\dots n_{M^d}!}.\label{classiccomb}
\end{equation}
This proves that
\[
|\Omega(n_1,\ldots,n_{M^d})|=\frac{M^{-d\,N}\,N!}{n_1!\dots n_{M^d}!},
\]
and therefore
\begin{equation}
Z_{N,M}=\frac{N!}{N^N}\,\sum_{n_1+\ldots+n_{M^d}=N} \Pi_k \frac{n_k^{n_k}}{n_k!}. \label{basicZNinter}
\end{equation}
We may easily simplify further the expression by recalling as well that for any $n\geq 0$
\[
\sqrt{n+1}\,\left(\frac{n}{e}\right)^n\leq n!\leq  C\,\sqrt{n}\,\left(\frac{n}{e}\right)^n.
\]
Hence
\[
\frac{n_k^{n_k}}{n_k!}\leq \frac{1}{\sqrt{n_k +1}}\,e^{n_k},\quad \frac{N!}{N^N}\leq C\,N^{1/2}\,e^{-N},
\]
and
\[
Z_{N,M}\leq C\,N^{1/2}\,\sum_{n_1+\ldots+n_{M^d}=N} \Pi_k \frac{1}{\sqrt{n_k+1}}.
\]
Observe that we still have not lost much and we can also derive the lower bound
\[
Z_{N,M}\geq C^{-M^{d}}\,N^{1/2}\,\sum_{n_1+\ldots+n_{M^d}=N} \Pi_k \frac{1}{\sqrt{n_k+1}}.
\]
Here however for the upper bound, we proceed more roughly by simply bounding
\begin{equation}
Z_{N,M}\leq C\,N^{1/2}\,|\{(n_1,\ldots,n_{M^d})\in \mathbb{N}^{M^d}\,|\ \forall k\ n_k\geq 0,\ n_1+\ldots+n_{M^d}=N\}|.\label{basicZNinter2}
\end{equation}
Similarly, for the lower bound, we simply use the trivial estimate $(n_k+1)^{-1/2}\geq N^{-1/2}$ to find
\begin{equation}
  \begin{split}
    &Z_{N,M}\geq C^{-M^d}\,N^{1/2-M^d/2}\\
    &\qquad|\{(n_1,\ldots,n_{M^d})\in \mathbb{N}^{M^d}\,|\ \forall k\ n_k\geq 0,\ n_1+\ldots+n_{M^d}=N\}|.\label{basicZNinter3}
\end{split}
  \end{equation}

%{\bf A slight improvement here!  Using a basic inequality, one can find that  given that $n_1 + \cdots + %n_{M^d} = N$, 
%\[
%\Pi_{i=1}^{M^d}  \frac{1}{\sqrt{n_k + 1}} \geq \Big( \frac{N + M^d}{M^d} \Big)^{- M^d/2}
%\]   
% }  
  
We finally recall that (Lemma 7 in \cite{JaWa1})
\begin{equation}
|\{(b_1,\ldots,b_p)\in \mathbb{N}^{p}\,|\ \forall k\ b_k \geq  1,\  b_1+\ldots+b_{p}= q\}|=\binom{q-1}{p-1}.\label{classiccomb2}
\end{equation}
Defining $b_l=n_l+1$, $p=M^d$ and $q=N+M^d$, we obtain that
\[
\begin{split}
  &|\{(n_1,\ldots,n_{M^d})\in \mathbb{N}^{M^d}\,|\ \forall k\ n_k\geq 0,\ \mbox{and}\ n_1+\ldots+n_{M^d}=N\}|\\
  &\quad=\binom{N+M^d-1}{M^d-1},
\end{split}
\]
or from \eqref{basicZNinter2} and \eqref{basicZNinter3}
\[
C^{-M^d}\,N^{1/2-M^d/2}\,\binom{N+M^d-1}{M^d-1}\leq Z_{N,M}\leq C\,N^{1/2}\,\binom{N+M^d-1}{M^d-1}.
\]
It now only remains to trivially bound again the binomial coefficient. Denoting $K=M^d-1$ for simplicity and if $K\leq N/2$
\[
\begin{split}
  &\binom{N+M^d-1}{M^d-1}=\frac{(N+K)!}{K!\,N!}\leq C\,\frac{(N+K)^K}{K^K}\,\frac{(N+K)^{1/2}}{N^{1/2}\,K^{1/2}}\,\frac{(N+K)^N}{N^N}\\
  &\quad\leq C\,N^K\,e^{N\,\log(1+K/N)}\,\frac{2^K}{K^K}\leq C\,N^K\,\frac{2^K\,e^K}{K^K}\leq C\,N^K.
\end{split}
\]
as $\log(1+K/N)\leq K/N$. We similarly have the lower bound
\[
\begin{split}
  &\binom{N+M^d-1}{M^d-1}=\frac{(N+K)!}{K!\,N!}\geq \frac{1}{C}\,\frac{(N+K)^K}{K^K}\,\frac{(N+K)^{1/2}}{N^{1/2}\,K^{1/2}}\,\frac{(N+K)^N}{N^N}\\
  &\quad\geq \frac{1}{C}\,\frac{N^K}{K^{1/2}}\,e^{N\,\log(1+K/N)}\,\frac{1}{K^K}\geq \frac{1}{C}\,N^K\,\frac{1}{K^{K+1/2}}\geq \frac{1}{C}\,\frac{N^K}{K^{1/2+K}},
\end{split}
\]
which concludes the estimate.
  \end{proof}

%%%%%%%%%%%%%%%%%%%%%%%%%%%%%%%%%%%%%
\subsection{Quantitative large deviations and regularization\label{regularLDE}}
%%%%%%%%%%%%%%%%%%%%%%%%%%%%%%%%%%%%%
We can now write a quantitative large deviations for abstract functional. Consider any $F:\;\mathcal{P}(\Pi^d)\to \R$, possibly unbounded.
%but satisfying the following bound
%\begin{equation}
%\int_{\Pi^{dN}} |F(\mu_N)-F(L_\eps\star \mu_N)|\,\rho_N\,dX^N\leq C\,\eps^k+\frac{\lambda}{N}\int_{\Pi^{dN}} \rho_N\,\log \frac{\rho_N}{\bar \rho_N },\label{regularityF}
%\end{equation}
 Our goal in this subsection is to derive an intermediary large deviation inequality where we have removed the singularity in $F$. More specifically, we will derive an estimate on
\[
\frac{1}{N}\log \int_{\Pi^{dN}} e^{N\,F(L_\eps\star\mu_N)}\,\bar\rho_N\,dX^N,
\]
for some classical, smooth convolution kernel $L$, instead of the original
\[
\frac{1}{N}\log \int_{\Pi^{dN}} e^{N\,F(\mu_N)}\,\bar\rho_N\,dX^N.
\]
When $F$ is given by \eqref{Flogexample}, this will give the final bound once the singularity of $V_0$ at $0$ has been removed.
This bound is directly connected to the large deviation functional associated to $F$, of which we recall the definition
\begin{equation}
I(F)=\max_{\mu\in \mathcal{P}(\Pi^d)}  \Bigl[F(\mu)-\int_{\Pi^d} \mu\,\log \frac{\mu}{\bar\rho}\,dx\Bigr]. \label{largedeviationfunct}
  \end{equation}
We may now state the quantitative large deviation type  estimate for the regularized $F$
\begin{Prop}
  Assume that $\log\bar\rho\in W^{1,\infty}$, then there exists a constant $C$ depending only on $d$, $L$,  s.t. for any $F$, one has that
  \[\begin{split}
  &\frac{1}{N}\log \int_{\Pi^{dN}} e^{N\,F(L_\eps\star\mu_N)}\,\bar\rho_N\,dX^N\leq I(F)\\
  &\qquad+\frac{C}{N^{1/(d+1)}\,\eps^{d/(d+1)}}\,(\log N+|\log \eps|+\|\log\bar\rho\|_{L^\infty})+C\,\eps\,\|\log\bar \rho\|_{W^{1,\infty}}.
  \end{split}
  \]
  %Consequently, if $F$ satisfies \eqref{regularityF}, one has for any $\rho_N\in { \mathcal{P}}(\Pi^{dN})$ that
  %  \[\begin{split}  &  \int_{\Pi^{dN}} F(\mu_N)\,\rho_N\,dX^N\leq I(F)+(\lambda+1)\, \mathcal{H}_N(\rho_N\,\vert \;\bar\rho_N)\\ &\qquad+C\,N^{-\frac{k}{kd+d+1}}\,\left(1+\|\log\bar\rho\|_{W^{1,\infty}} + \mathcal{H}_N(\rho_N\,\vert \;\bar\rho_N)+\log N\right).  \end{split}  \]
  \label{largedeviation}
\end{Prop}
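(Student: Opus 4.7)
The plan is to reduce the desired estimate to the combinatorial bound of Proposition~\ref{largedeviationbasic}, which controls $\int_{\Pi^{dN}} e^{N\int\mu_N\log(L_M[\mu_N]/\bar\rho)}\bar\rho_N\,dX^N$, by replacing $F(L_\eps\star\mu_N)$ with a quantity built from the cube average $f_M:=L_M[\mu_N]$. First I apply the Legendre-type inequality coming directly from the definition of $I(F)$: for every probability measure $\mu$ on $\Pi^d$, $F(\mu)\le I(F)+\int\mu\log(\mu/\bar\rho)\,dx$, so plugging in $\mu=\rho_\eps:=L_\eps\star\mu_N$ reduces the task to bounding the relative entropy $H(\rho_\eps\,|\,\bar\rho):=\int\rho_\eps\log(\rho_\eps/\bar\rho)\,dx$ from above by $\int\mu_N\log(f_M/\bar\rho)$ up to manageable errors in $\eps$ and $M$.

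Three elementary ingredients drive this comparison. First, a combinatorial identity: since both sides equal $\sum_k(n_k/N)\log(M^dn_k/N)$, one has $\int\mu_N\log f_M\,dx=\int f_M\log f_M\,dx$, which aligns the target of Proposition~\ref{largedeviationbasic} with an actual relative entropy of $f_M$. Second, Lipschitz control of $\log\bar\rho$ together with the fact that $L_\eps$ is a probability kernel of first moment $O(\eps)$ yields $|\int(\rho_\eps-\mu_N)\log\bar\rho\,dx|\le C\eps\,\|\log\bar\rho\|_{W^{1,\infty}}$; this is the origin of the additive $\eps\|\log\bar\rho\|_{W^{1,\infty}}$ in the final bound. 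Third, and most delicately, one compares $\int\rho_\eps\log\rho_\eps$ with $\int f_M\log f_M$ via the intermediate density $L_\eps\star f_M$: by Jensen's inequality, $\int(L_\eps\star f_M)\log(L_\eps\star f_M)\le\int f_M\log f_M$ (entropy decreases under convolution by a probability kernel), while a duality argument based on $\int\phi(\rho_\eps-L_\eps\star f_M)\,dx=\int(L_\eps\star\phi)\,d(\mu_N-f_M\,dx)$ exploits that the signed measure $\mu_N-f_M\,dx$ has vanishing mass on each cube of diameter $1/M$ to obtain $\|\rho_\eps-L_\eps\star f_M\|_{L^1}\le C\|\nabla L_\eps\|_{L^1}/M\le C/(\eps M)$. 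The resulting entropy difference is then handled by the tangent-line inequality $\phi(a)-\phi(b)\le(1+\log a)(a-b)$ for $\phi(x)=x\log x$, combined with the uniform upper bounds $\rho_\eps,L_\eps\star f_M\le C\eps^{-d}$ and lower bounds of order $c/(N\eps^d)$ on their effective supports, which confine the logarithms at order $\log N+|\log\eps|$. Combining the three ingredients,
\[
H(\rho_\eps\,|\,\bar\rho)\le\int\mu_N\log\frac{f_M}{\bar\rho}\,dx+C\eps\|\log\bar\rho\|_{W^{1,\infty}}+\frac{C(\log N+|\log\eps|+\|\log\bar\rho\|_\infty)}{\eps M}.
\]

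Exponentiating this inequality, integrating against $\bar\rho_N$, and applying Proposition~\ref{largedeviationbasic} to the remaining factor $\exp(N\int\mu_N\log(f_M/\bar\rho))$ adds $(M^d+1/2)\log N/N$ after taking $(1/N)\log$. The two errors $1/(\eps M)$ and $M^d/N$ are both equal to $(N\eps^d)^{-1/(d+1)}$ under the optimal choice $M\sim(N/\eps)^{1/(d+1)}$, which (valid as soon as $N\eps^d$ is large enough that $M^d\le N/2$) yields the announced rate; a Young-type inequality collects the various logarithmic factors into the clean sum $\log N+|\log\eps|+\|\log\bar\rho\|_\infty$. The main technical obstacle is the third ingredient above: the entropy comparison between $\rho_\eps$ and $L_\eps\star f_M$ involves logarithms of functions which may vanish, so carefully identifying their effective supports (points within $\eps$ of some particle for $\rho_\eps$, respectively within $\eps$ of a cube containing a particle for $L_\eps\star f_M$) and establishing a uniform lower bound of order $(N\eps^d)^{-1}$ on those supports is what guarantees logarithms of the expected order $\log N+|\log\eps|$ instead of uncontrolled blow-ups.
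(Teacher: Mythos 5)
Your proposal follows essentially the same route as the paper: Legendre duality of $I(F)$, passage from $L_\eps\star\mu_N$ to the piecewise-constant cube average $L_M[\mu_N]$ via the $L^1$ comparison of Lemma~\ref{changeconvolution}, Jensen's inequality under $L_\eps$-convolution, the identity $\int\mu_N\log L_M[\mu_N]=\int L_M[\mu_N]\log L_M[\mu_N]$, Lipschitz control of $\log\bar\rho$, the combinatorial bound of Proposition~\ref{largedeviationbasic}, and the optimization $M\sim(N/\eps)^{1/(d+1)}$. The only genuine addition is that you explicitly flag and patch the delicate point the paper states tersely — turning the $L^1$ distance into an entropy difference requires a lower bound on the densities in addition to the upper bound $C\eps^{-d}$, which you supply through effective supports at the cost of a $\log N$ factor already present in the final rate.
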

 
 The main idea to prove Prop. \ref{largedeviation} is to connect the left-hand side
 to an hypercubes averaging quantity for which it is  possible to apply the large deviation estimate in Prop.~\ref{largedeviationbasic} that we have proved previously.
   The first step is hence to replace the general $L_\eps\star \mu_N$ first by $L_\eps \star L_M [\mu_N]$ and then by $L_M[\mu_N]$ for the kernel $L_M$ previously defined.
First we observe that one has the following lemma
  \begin{lemma}
    There exists $C$ depending only on $d$ and $L$ s.t. for any measure $\mu\in{\mathcal{P}}(\Pi^d)$ and for $M\,\eps\geq 1$
    \[
\left\|L_\eps\star\mu-L_M[L_\eps\star\mu]\right\|_{L^1(\Pi^d)}\leq \frac{C}{M\,\eps}.
\]\label{changeconvolution}
  \end{lemma}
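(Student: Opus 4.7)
The strategy is to exploit that $f := L_\eps\star\mu$ is a smooth function to which we can apply a Poincaré-type inequality on each hypercube $C_k^M$. The quantity $L_M[f](x)$ is constant on $C_k^M$ and equals the average $M^d \int_{C_k^M} f(y)\,dy$, so the desired $L^1$ error decouples into a sum of local oscillations of $f$ over hypercubes of diameter $O(1/M)$, and these can be controlled by the $L^1$ norm of $\nabla f$.

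The first observation is the gradient bound
\[
\|\nabla f\|_{L^1(\Pi^d)} = \|\nabla L_\eps \star \mu\|_{L^1(\Pi^d)} \le \|\nabla L_\eps\|_{L^1(\R^d)}\,\|\mu\|_{\mathcal{M}(\Pi^d)} \le \frac{C}{\eps},
\]
by Young's convolution inequality and the standard scaling $\|\nabla L_\eps\|_{L^1} \le C\eps^{-1}$ for a mollifier of width $\eps$, together with $\mu \in \mathcal{P}(\Pi^d)$. The second ingredient is a classical Poincaré inequality on the cube $C_k^M$ of side length $1/M$: for any $f\in W^{1,1}(C_k^M)$, denoting by $\bar f_k = M^d \int_{C_k^M} f(y)\,dy$ its average,
\[
\int_{C_k^M} |f(x)-\bar f_k|\,dx \le \frac{C}{M}\int_{C_k^M} |\nabla f(x)|\,dx,
\]
which follows by writing $f(x)-f(y) = \int_0^1 \nabla f(x+t(y-x))\cdot(y-x)\,dt$, integrating against $M^d\,\mathbf{1}_{C_k^M}(y)$, and performing the standard change of variables on segments joining two points of $C_k^M$ to absorb the diameter $\sqrt{d}/M$.

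Since $L_M[f](x) = \bar f_k$ for $x\in C_k^M$, summing the Poincaré estimate over the $M^d$ disjoint hypercubes gives
\[
\|f - L_M[f]\|_{L^1(\Pi^d)} = \sum_{k=1}^{M^d} \int_{C_k^M} |f-\bar f_k|\,dx \le \frac{C}{M}\,\|\nabla f\|_{L^1(\Pi^d)} \le \frac{C}{M\eps},
\]
which is the claimed bound. The assumption $M\eps \ge 1$ is only there to render the estimate informative: otherwise $C/(M\eps)$ exceeds the trivial bound $\|L_\eps\star\mu\|_{L^1} + \|L_M[L_\eps\star\mu]\|_{L^1} \le 2$, so the inequality holds automatically. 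There are no genuine obstacles in this argument; the only minor point to verify is that the Poincaré constant on a cube scales linearly with its diameter, which is standard and independent of $k$, yielding a constant $C$ depending only on $d$ and on the fixed mollifier $L$ (through $\|\nabla L\|_{L^1}$).
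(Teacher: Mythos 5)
Your proof is correct for the statement as printed, and it takes a genuinely different route from the paper's own argument. You view $f=L_\eps\star\mu$ as a $W^{1,1}$ function and combine the scale-invariant Poincar\'e--Wirtinger inequality on each hypercube, $\int_{C_k^M}|f-\bar f_k|\,dx\le \frac{C}{M}\int_{C_k^M}|\nabla f|\,dx$, with the gradient bound $\|\nabla(L_\eps\star\mu)\|_{L^1}\le\|\nabla L_\eps\|_{L^1}\le C/\eps$; summing over the disjoint cubes gives the result. The paper instead freezes the outer variable $x$, replaces the \emph{kernel} $y\mapsto L_\eps(x-y)$ by its cube average, estimates the error per cube by $\|\nabla L_\eps\|_{L^\infty}\cdot M^{-1}\lesssim\eps^{-(d+1)}/M$, and then recovers $\eps^d$ by integrating in $x$ over the support of $L_\eps(x-\cdot)$ while $\mu\in\mathcal{P}$ absorbs the remaining integral. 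Both proofs exploit the same cancellation (oscillation of the smooth kernel over the scale $1/M\ll\eps$), but yours packages it as ``gradient bound plus Poincar\'e,'' which is more modular, whereas the paper's is a direct Taylor/Lipschitz estimate on $L_\eps$.

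One caution, which is useful to be aware of: the lemma's displayed statement appears to have a typo. The paper's own proof, and the place in the proof of Prop.~\ref{largedeviation} where the lemma is used, actually concern the quantity $\|L_\eps\star\mu-L_\eps\star L_M[\mu]\|_{L^1}$ (coarse-grain the measure, then mollify) rather than $\|L_\eps\star\mu-L_M[L_\eps\star\mu]\|_{L^1}$ (mollify, then coarse-grain). These are not the same operator applied to $\mu$. Your Poincar\'e-on-$f$ argument proves the second, not the first; to get the first one would note
\[
L_\eps\star\mu(x)-L_\eps\star L_M[\mu](x)=\sum_k\int_{C_k^M}\Bigl(L_\eps(x-y)-M^d\!\int_{C_k^M}\!L_\eps(x-z)\,dz\Bigr)\mu(dy),
\]
where the oscillation is now integrated against $\mu$ (which has no density) rather than against Lebesgue measure, so the $L^1$-Poincar\'e trick does not apply directly and one is essentially forced back to the pointwise Lipschitz bound on $L_\eps$, as in the paper.
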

  \begin{proof}[Proof of Lemma \ref{changeconvolution}]
    The proof is rather straightforward and consists in noticing for example that for any $x$, if $L$ has support in $B(0,r)$ then
    \[\begin{split}
    &L_\eps\star\mu(x)=\sum_{k} \int_{C_k^M} L_\eps(x-y)\,\mu(dy)\\
    &\ =\sum_{k} \frac{1}{|C_k^M|}\,\int_{C_k^M} L_\eps(x-z)\,dz\,\int_{C_k^M} \mu(dy)\\
    &\quad+\sum_{k} \int_{C_k^M} \frac{1}{|C_k^M|}\,\int_{C_k^M} (L_\eps(x-y)-L_\eps(x-z))\,dz\,\mu(dy),
    \end{split}
    \]
    so that
    \[
    |L_\eps\star\mu(x)-L_\eps\star L_M[\mu](x)|\leq C\,\|L\|_{W^{1,\infty}}\,\frac{1}{M\,\eps}\, \sum_{k} \int_{C_k^M}\frac{\ind_{|x-y|\leq r\,\eps+1/M}}{\eps^d}\,\mu(dy),
    \]
    with 
    \[
\sum_{k} \int_{C_k^M}\frac{\ind_{|x-y|\leq r\,\eps+1/M}}{\eps^d}\,\mu(dy)\leq 2,
\]
since $\mu$ is a probability measure $\int d\mu=1$.
  \end{proof}
 
\medskip

\noindent Turning now back to the main proof.
\begin{proof}[Proof of Prop. \ref{largedeviation}]
We start by using the large deviation functional \eqref{largedeviationfunct} to find
 \[
  \begin{split}
    &\frac{1}{N}\log \int_{\Pi^{dN}} e^{N\,F(L_\eps\star\mu_N)}\,\bar\rho_N\,dX^N\\
    &\quad\leq \frac{1}{N}\log \int_{\Pi^{dN}} e^{N\,I(F)+N\,\int L_\eps\star\mu_N\,\log \frac{L_\eps\star \mu_N}{\bar\rho}}\,\bar\rho_N\,dX^N\\
    &\quad\leq I_F+\frac{1}{N}\,\log\int_{\Pi^{dN}} e^{N\,\int L_\eps\star\mu_N\,\log \frac{L_\eps\star \mu_N}{\bar\rho}}\,\bar\rho_N\,dX^N=I_F+I_N^\eps.
  \end{split}
  \]
One then finds from Lemma~\ref{changeconvolution} that 
  \[\begin{split}
  &\int L_\eps\star\mu_N\,\log \frac{L_\eps\star\mu_N}{\bar\rho}\leq \int L_\eps\star L_M[\mu_N]\,\log \frac{L_\eps\star L_M[\mu_N]}{\bar\rho}\\
  &\ +\|L_\eps\star\mu_N-L_\eps\star L_M[\mu_N]\|_{L^1}\,(|\log \eps|+\|\log\bar\rho\|_{L^\infty}). 
  \end{split}
  \]
Therefore
  \[
  \begin{split}
    &  I_N^\eps\leq \frac{1}{N}\log\int_{\Pi^{dN}} e^{N\,\int L_\eps\star L_M[\mu_N]\,\log \frac{L_\eps\star L_M[\mu_N]}{\bar\rho}}\,\bar\rho_N\,dX^N\\
    &\qquad+\frac{C}{M\,\eps}\,(|\log \eps|+\|\log\bar\rho\|_{L^\infty}).\\
  \end{split}
  \]
  Now we may just recall that $x \log x$ is a convex function so
  \[\begin{split}
  \int L_\eps\star L_M[\mu_N]\,\log L_\eps\star L_M[\mu_N]&\leq \int L_\eps\star \left(L_M[\mu_N]\,\log L_M[\mu_N]\right)\\
  &=\int L_M[\mu_N]\,\log L_M[\mu_N].
  \end{split}
  \]
  Moreover using the definition of $L_M$, it is straightforward to check that
  \[
\int L_M[\mu_N]\,\log L_M[\mu_N]=\int \mu_N\,\log L_M[\mu_N].
  \]
  Using now that $\log \bar\rho\in W^{1,\infty}$ and since $1/M\leq \eps$, we simply have that
  \[
-\int L_\eps\star L_M[\mu_N]\,\log\bar\rho\leq -\int \mu_N\,\log\bar\rho+C\,\eps\,\|\log\bar \rho\|_{W^{1,\infty}}.
\]
This leads to
\[
\begin{split}
    &  I_N^\eps\leq \frac{1}{N}\log\int_{\Pi^{dN}} e^{N\,\int \mu_N\,\log \frac{ L_M[\mu_N]}{\bar\rho}}\,\bar\rho_N\,dX^N\\
    &\qquad+\frac{C}{M\,\eps}\,(|\log \eps|+\|\log\bar\rho\|_{L^\infty})+C\,\eps\,\|\log\bar \rho\|_{W^{1,\infty}}.\\
\end{split}
\]
Using finally Prop. \ref{largedeviationbasic}, we find
\begin{equation}
I_N^\eps\leq C\,\frac{M^d}{N}\,\log N+\frac{C}{M\,\eps}\,(|\log \eps|+\|\log\bar\rho\|_{L^\infty})+C\,\eps\,\|\log\bar \rho\|_{W^{1,\infty}}.\label{INeps}
\end{equation}
It only remains to optimize in $M$ by choosing for example $M^{d+1}=N/\eps$ to conclude.  %For the second part, for any $\eps>0$,  we recall \eqref{regularityF}  
%  \[
%\int_{\Pi^{dN}} |F(\mu_N)-F(L_\eps\star \mu_N)|\,\rho_N\,dX^N\leq C\,\eps^k  +\lambda\,H_N(\rho_N\,\vert \;\bar\rho_N) .
%\]
%and use again  Lemma 1 from Jabin-Wang \cite{JaWa1}
%\[
%\int_{\Pi^{dN}} F(L_\eps\star\mu_N)\,\rho_N\,dX^N\leq H_N(\rho_N\,\vert \;\bar\rho_N)+\frac{1}{N}\,\log \int_{\Pi^{dN}} e^{N\,F(L_\eps\star\mu_N)}\,\bar\rho_N\,dX^N,
%\]
%to which we may now apply the first part of the theorem. This now gives us
%\[
%\begin{split}
%  &\int_{\Pi^{dN}} F(\mu_N)\,\rho_N\,dX^N\leq I(F)+H_N(\rho_N\,\vert \;\bar\rho_N) 
%    +C\,\eps^k  +\lambda\,H_N(\rho_N\,\vert \;\bar\rho_N) \\
%  &\ +\frac{C}{N^{1/(d+1)}\,\eps^{d/(d+1)}}\,(\log N+|\log \eps|+\|\log\bar\rho\|_{L^\infty})+C\,\eps\,\|\log\bar\rho\|_{W^{1,\infty}},
%  \end{split}
%\]
% leading to a last minimimization in $\eps$. Since $k\leq 1$, we may just take $\eps^{k+d/(d+1)}=N^{-1/(d+1)}$ concluding the proof.
\end{proof}
%%%%%%%%%%%%%%%%%%%%%%%%%%%%%%%%%%%%%%%%%%
\subsection{Estimating the large deviation functional}
%%%%%%%%%%%%%%%%%%%%%%%%%%%%%%%%%%%%%%%%%%
Prop.~\ref{largedeviation} controls the regularized large deviation inequality in terms of the classical large deviation functional. Our next step is hence to estimate this functional  and to prove that for some potential size $I(F)=0$ namely 
\begin{lemma}
    For any $\bar\rho\in L^\infty$ and any $c<d$, there exists a truncation $\delta$ s.t.  for any $\tilde V$ with $\|\tilde V\|_{L^1}\leq \delta$ and $\tilde V(x)\geq c\,\log |x|$,  and the functional
    \[
F_{\tilde V}(\mu)=-\,\int_{\{x\neq y\}} \tilde V(x-y)\,(\mu(dx)-\bar\rho(x)\,dx)\,(\mu(dy)-\bar\rho(y)\,dy),
  \]
  one then has that $I(F_{\tilde V})=0$.\label{I(F)log} 
\end{lemma}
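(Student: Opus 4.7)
Since $\mu=\bar\rho$ yields $F_{\tilde V}(\bar\rho)=0=H(\bar\rho\mid\bar\rho)$, one has $I(F_{\tilde V})\ge 0$ automatically. The heart of the matter is the reverse inequality, which amounts to proving that $F_{\tilde V}(\mu)\le H(\mu\mid\bar\rho)$ for every probability density $\mu$ on $\Pi^d$. First I would expand
\[
F_{\tilde V}(\mu) = -\iint \tilde V(x-y)\,\mu\otimes\mu + 2\int \tilde V\star\bar\rho\,d\mu - \iint \tilde V(x-y)\,\bar\rho\otimes\bar\rho,
\]
and observe that the linear and constant pieces are each bounded in absolute value by $C\delta\|\bar\rho\|_{L^\infty}(1+\|\bar\rho\|_{L^\infty})$, using Young's inequality and $\|\tilde V\|_{L^1}\le\delta$. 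These terms are thus absorbed at the end by taking $\delta$ small.

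The core step is to bound the quadratic piece $-\iint\tilde V\,\mu\otimes\mu$ via the Donsker--Varadhan (Gibbs) variational principle on the product space $\Pi^{2d}$ with reference $\bar\rho\otimes\bar\rho$: for every measurable $g$,
\[
\int g\,d(\mu\otimes\mu) \le H(\mu\otimes\mu \mid \bar\rho\otimes\bar\rho) + \log \iint e^{g}\,d(\bar\rho\otimes\bar\rho),
\]
and $H(\mu\otimes\mu\mid\bar\rho\otimes\bar\rho)=2H(\mu\mid\bar\rho)$. Choosing $g(x,y) = -\beta\,\tilde V(x-y)$ with $\beta>1$ fixed so that $\beta c < d$ (possible precisely because $c<d$), the partition function $Z_\beta := \iint e^{-\beta\tilde V(x-y)}\,\bar\rho(x)\bar\rho(y)\,dxdy$ is finite: the hypothesis $\tilde V(z)\ge c\log|z|$ gives $e^{-\beta\tilde V(x-y)}\le|x-y|^{-\beta c}$, and $|z|^{-\beta c}$ is integrable over $\Pi^d$ as $\beta c<d$.

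The main technical obstacle --- and the heart of the proof --- is twofold: recovering the sharp coefficient $1$ (instead of the crude $2/\beta$) in front of $H(\mu\mid\bar\rho)$, and showing that the residual $\tfrac{1}{\beta}\log Z_\beta$ is arbitrarily small in terms of $\delta$. For the latter, I would write $Z_\beta = 1 + \iint(e^{-\beta\tilde V}-1)\,\bar\rho\otimes\bar\rho$ and split the integral at scale $\varepsilon$: on $\{|x-y|<\varepsilon\}$ the integrand is dominated by $|x-y|^{-\beta c}\|\bar\rho\|_{L^\infty}^2$, whose integral is of order $\varepsilon^{d-\beta c}\to 0$; on $\{|x-y|\ge\varepsilon\}$ a Taylor expansion gives $|e^{-\beta\tilde V}-1|\le C_{\varepsilon,\beta}|\tilde V|$, contributing at most $C_{\varepsilon,\beta}\delta$. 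Taking first $\varepsilon$ and then $\delta$ small makes $\log Z_\beta$ as small as desired. The sharpening of the entropy coefficient to~$1$ is the delicate point, which I expect to accomplish by an interpolation in $\beta$ together with an iterative Gibbs-tilting argument on the reference measure --- successively replacing $\bar\rho\otimes\bar\rho$ by the tilted $e^{-\tilde V}\bar\rho\otimes\bar\rho/Z_1$ --- so that the product-space factor of $2$ is progressively absorbed into $\|\tilde V\|_{L^1}$-small corrections. Combining this with the linear and constant estimates then yields $F_{\tilde V}(\mu)\le H(\mu\mid\bar\rho)$, hence $I(F_{\tilde V})\le 0$ and the lemma follows.
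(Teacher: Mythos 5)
Your reduction to showing $F_{\tilde V}(\mu)\le H(\mu\mid\bar\rho)$ for every $\mu$ is correct, and your treatment of the linear and constant pieces (Young's inequality, $\|\tilde V\star\bar\rho\|_{L^\infty}\le\delta\|\bar\rho\|_{L^\infty}$, etc.) is fine. The gap is in the quadratic piece, and you have correctly located it but not closed it. The Donsker--Varadhan step on the product space yields
\[
-\iint\tilde V\,d(\mu\otimes\mu)\;\le\;\frac{2}{\beta}\,H(\mu\mid\bar\rho)+\frac{1}{\beta}\log Z_\beta,
\]
and absorbing the entropy term requires $\beta\ge 2$, while finiteness of $Z_\beta$ forces $\beta<d/c$. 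These are compatible only when $c<d/2$, whereas the lemma demands the result for every $c<d$. So for $c\in(d/2,d)$ the inequality you get is strictly weaker than what is needed, and the smallness of $\log Z_\beta$ (which you argue carefully and correctly) cannot compensate: the overshoot is a \emph{multiplicative} defect $(2/\beta-1)H(\mu\mid\bar\rho)$, unbounded as $H(\mu\mid\bar\rho)\to\infty$, not an additive one. The proposed remedy --- ``interpolation in $\beta$ together with an iterative Gibbs-tilting argument'' --- is not a proof sketch that can be checked; the factor of $2$ comes from $H(\mu\otimes\mu\mid\bar\rho\otimes\bar\rho)=2H(\mu\mid\bar\rho)$, and re-tilting the product reference measure does not change this structural fact.

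The paper's route is genuinely different and avoids the factor-$2$ obstruction by replacing the generic exponential moment bound with the \emph{logarithmic Hardy--Littlewood--Sobolev inequality}
\[
-\iint\log|x-y|\,\mu(dx)\,\mu(dy)\;\le\;\frac{1}{d}\int\mu\log\mu\;+\;C_d,
\]
whose sharp constant $1/d$ is specific to the logarithmic kernel and cannot be recovered from Donsker--Varadhan. Combined with $\tilde V\ge c\log|x|$ and $c<d$, this gives a bound on the quadratic piece with the favorable coefficient $c/d<1$, yielding \emph{coercivity} of $\mu\mapsto F_{\tilde V}(\mu)-H(\mu\mid\bar\rho)$, not yet the sign. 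The remaining --- and essential --- step in the paper is an Euler--Lagrange analysis: a maximizer $\bar\mu$ exists by coercivity, the first-order condition forces $\bar\mu=\bar\rho\,e^{-2\tilde V\star(\bar\mu-\bar\rho)}/M_u$, and setting $u=-\tilde V\star(\bar\mu-\bar\rho)$ one obtains a fixed-point equation that implies $\|u\|_{L^1}\le C\|\tilde V\|_{L^1}\|u\|_{L^1}$; taking $\delta$ small enough forces $u=0$, hence $\bar\mu=\bar\rho$ and $I(F_{\tilde V})=0$. Your proposal has neither the log-HLS input nor the fixed-point/Euler--Lagrange argument, so it stops short of the result precisely at the point you yourself flag as delicate.
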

This lemma precisely explains  why we  decompose  the potential  $V$ into short-range $V_0$ and long range $W$ in the main proof.  We will indeed later apply the lemma to $\tilde V= \frac{\lambda}{2 \sigma} V_0$ and choose $\eta$ s.t. $\|\tilde V\|_{L^1}<\delta$. Otherwise the case of optimality in the logarithmic Hardy-Littlewood-Sobolev shows that $I(F)\neq 0$.

\bigskip

\begin{proof}[Proof of Lemma \ref{I(F)log}]
    We start by recalling the classical logarithmic Hardy-Littlewood-Sobolev inequality (See Theorem 1 in \cite{CL} for instance), 
    \[
- \,\int_{\R^d} \log |x-y|\,\mu(dx)\,\mu(dy)   \leq  \frac 1 d \int_{\R^d} \mu\,\log \mu\,dx +  C_d,
\]
for some constant $C_d$ depending only on $d$ and any probability measure $\mu$.
We refer for instance to Dolbeault-Campos \cite{DoSe}  for a discussion of the importance of this inequality for the Patlak-Keller-Segel system.

This inequality shows that for $c<d$, using that $\tilde V\geq -c\,\log |x|$
\[\begin{split}
&F_{\tilde V}(\mu)-\int \mu\,\log \frac{\mu}{\bar\rho}\,dx\leq C_d\,( (1+\|\tilde V\|_{L^1})\,\|\bar\rho\|_{L^\infty} + \|\log \bar \rho \|_{L^\infty})\\
&\qquad-(1- \frac{c}{d} )\,\int \mu\,\log \frac{\mu}{\bar\rho}\,dx,
\end{split}
\]
and is hence coercive. This implies that, if we consider a maximizing sequence $\mu_n$, then $\mu_n$ is bounded in $L\,\log L$ and any weak limit is a maximum for $F_{\tilde V}(\mu)-\int \mu\,\log \frac{\mu}{\bar\rho}\,dx$.

The value of $I(F_{\tilde V})$ is hence given by a such a maximal measure $\bar\mu$. By standard arguments, a maximum $\bar\mu$ must satisfy that
on the support of $\bar\mu$
\[
1+\log \frac{\bar\mu}{\bar\rho} + {2}\,\tilde V\star(\bar\mu-\bar\rho)=\kappa.
\]
%with $K_\eta=\log |x-y|\,\ind_{|x-y|\leq\eta}$.
The constant $\kappa$ is chosen so that $\int \bar\mu=1$. Note that this implies that $\bar\mu$ cannot vanish on the support of $\bar\rho$ and hence
\[
\bar\mu=\frac{\bar\rho}{M}\,e^{ - {2}\,\tilde V\star(\bar\mu-\bar\rho)},\quad M=\int \bar\rho\,e^{ - {2\,}\tilde V\star(\bar\mu-\bar\rho)}\,dx.
\]
Let us denote $u= - \tilde V\star (\bar\mu-\bar\rho)$ and to emphasize the dependence on $u$ in $M$
\[
M=M_u=\int \bar\rho\,e^{{2}\,u(x)}\,dx.
\]
We observe that $u$ is a solution to
\begin{equation}
  u= - \tilde V\star\left(\bar\rho\,\left(\frac{e^{{2}\,u(x)}}{M_u}-1\right)
  \right),\label{eulerlagrange}
  \end{equation}
which is in fact a sort of non-linear elliptic equation. Our goal is simply to show that the unique solution to \eqref{eulerlagrange} is $u=0$ provided that $\delta$ is chosen small enough.

This is straightforward enough: First note that since $\bar\mu\in L\,\log L$ then $u\in L^\infty$ (by Lemma 1 in \cite{JaWa2} for instance) and in fact there exists $C$ depending only on $1-c/d$, $\|\tilde V\|_{L^1}$ and $\|\bar\rho\|_{L^\infty}$ s.t. $\|u\|_{L^\infty}\leq C$. Therefore
\[
\left|e^{{2 }\,u(x)}- e^{{2}\,u(y)}\right|\leq C\,|u(x) - u(y) | \leq C (|u(x)| + |u(y)|  )
\]
and
\[
 e^{-C}   \leq  M_u = \int \bar \rho(x) e^{{2}\,u(x) } d x   \leq e^C. 
\]  
Hence
\[
\begin{split} 
& \| \bar \mu - \bar \rho\|_{L^1} = \left\|\bar\rho\,\left(\frac{e^{{2}\,u(x)}}{M_u}-1\right)\right\|_{L^1}  \\
& \leq \frac{1}{M_u} \int \int \bar \rho(x) \bar \rho(y) |e^{{2}\, u(x) } - e^{{2 }\, u(y) }| \ud x \ud y \leq C\,\|u\|_{L^1},
\end{split} 
\]
for some constant $C$. To conclude, we note using \eqref{eulerlagrange} that
\[
\|u\|_{L^1}\leq C\,\|\tilde V\|_{L^1}\,\|u\|_{L^1},
\]
and it is enough to take $\delta$ small enough s.t. $C\,\|\tilde V\|_{L^1}<1$ to have that $u=0$ and finally $I(F_\eta)=0$.
  \end{proof}
%
%%%%%%%%%%%%%%%%%%%%%%%%%%%%%%%%%%%%%%%%%%%%%%%%%%%%
\subsection{Proof of Prop \ref{prop3.1}\label{proofprop3.1}}
%%%%%%%%%%%%%%%%%%%%%%%%%%%%%%%%%%%%%%%%%%%%%%%%%%%%
  We are finally ready to prove Prop. \ref{prop3.1}. We start by using again Lemma~\ref{duality} (following again Lemma 1 in \cite{JaWa2}) to obtain that
  \[
\gamma\,\int_{\Pi^{dN}} F(\mu_N)\,\rho_N\,dX^N\leq \mathcal{H}_N(\rho_N\,\vert\;\bar\rho_N)+\frac{1}{N}\log \int \bar\rho_N\,e^{N\,\gamma\,F(\mu_N)}\,dX^N,
  \]
  so that the whole question resolves around estimating
  \begin{equation}
Z_N(\gamma)=\int_{\Pi^{dN}} \bar\rho_N\,e^{N\,\gamma\,F(\mu_N)}\,dX^N.\label{partitionfunction}
    \end{equation}
 Of course if one studies the maximization problem
  \[
\sup_{\rho_N}  \bigg(  \gamma\,\int_{\Pi^{dN}} F(\mu_N)\,\rho_N\,dX^N- \mathcal{H}_N(\rho_N\,\vert\;\bar\rho_N) \bigg),
  \]
then the maximum is actually given by 
 \[
\rho_N=\frac{1}{Z_N}\, \bar \rho_N\,e^{N\,\gamma\,F(\mu_N)},\qquad Z_N=\int_{\Pi^{dN}} \bar \rho_N\,e^{N\,\gamma\,F(\mu_N)},
  \]
  and inserting this in the maximization problem exactly leads to $\frac{1}{N}\log Z_N$ with $Z_N$ given by \eqref{partitionfunction}. 

  \bigskip
  
$\bullet$ {\em Step~1: Introducing a regularized $F_\eps$.}   To estimate \eqref{partitionfunction}, we first introduce the regularized quantity
  \[
F_\eps(\mu)=-\int_{ \Pi^{2d} \cap \{x\neq y\}} V_\eps(x-y)\,(d\mu-d\bar\rho)^{\otimes2},
\]
with $V_\eps$  some regularized $V_\eps$. We denote
\[
Z_{N,\eps}(\gamma)=\int_{\Pi^{dN}} \bar\rho_N\,e^{N\,\gamma\,F_\eps(\mu_N)}\,dX^N,
\]
and the main point is to bound $Z_N(\gamma)$ in terms of the regularized $Z_{N,\eps}(\gamma')$ for some $\gamma'>\gamma$. The control on $Z_{N,\eps}$ will be performed at the end of the proof and follows in a straightforward manner from Prop.~\ref{largedeviation} and Lemma~\ref{I(F)log}.
To define $V_\eps$, we decompose $V_0$ by writing
\[
V_0 = \tilde V + \bar V,
\]
where
  \[
  \quad \tilde V=V_0-\bar V \hbox{ with }  \bar V(x)=\log |x|\,\chi(|x|/\eta),
  \]
  so that in particular $\tilde V\geq 0$ and still satisfies
  \begin{equation}
|\nabla \tilde V(x)|\leq \frac{C}{|x|^k} \hbox{ for } k >1/2.\label{nablatildeV}
  \end{equation}
We now choose $V_\eps(x)=\tilde V_\eps(x) + \bar V_\eps(x)$ 
where
\[
\tilde V_\eps(x)=\tilde V(x)\,(1-\chi(|x|/\eps^{1/2k}))
\hbox{ with } \bar V_\eps(x)=\log (\max(|x|,\,\eps))\,\chi(|x|/\eta). %\mbox{with}\ v_\eps=\min_{|z|\geq \eps^{1/2k}} \tilde V(z).
\]
Remark that we truncate $\tilde V_\eps$ at a much larger scale than $\bar V_\eps$: $\eps^{1/2k}$ vs $\eps$.

\bigskip

$\bullet$ {\em Step~2: Identifying the close and singular interactions.} Our next step is to relate $Z_N$ with 
\begin{equation}
I=\int_{\Pi^{dN}} \bar\rho_N\,e^{N\,\gamma\,F_\eps(\mu_N)} e^{\gamma\,\sum_{j>1} \log\frac{\eps}{|x_1-x_j|}\,\ind_{|x_1-x_j|\leq\eps}}\,\ud X^N.
\label{ZNZNeps11}
\end{equation}
The integral in $I$ clearly separates the regularized $F_\eps$ from the singularity in $V_0$. Moreover it identifies one test particle, which we choose as particle $1$, and compare all singularities through this particle.

By developing in $Z_N$, we have that
\begin{equation}
\begin{split}
  &F(\mu_N)=-\int_{\Pi^{2d}\cap\{x\neq y\}} V(x-y) \,( d \mu_N - d \bar\rho)^{\otimes2}\\
  &\quad  =-\frac{1}{N^2} \sum_{ i \ne j } V(x_i - x_j)
   + 2\frac{1}{N}\,\sum_i V\star\bar\rho(x_i)\\
  &\qquad-\int_{\Pi^{2d}\cap\{x\neq y\}} V(x-y)\,\bar\rho(x)\,\bar\rho(y)\,dx\,dy,
  \end{split}\label{development}
\end{equation}
with a similar formula for $F_\eps(\mu_N)$. Observe that for any $x$,
\[\begin{split}
|V_\eps\star\bar\rho(x)-V\star\bar\rho(x)|&\leq \|V_\eps-V\|_{L^1}\,\|\bar\rho\|_{L^\infty}\leq \|V\,\ind_{|z|\leq \eps^{1/2k}}\|_{L^1}\,\|\bar\rho\|_{L^\infty}\\
&\leq C\,\eps^{1/2kp^*}\,\|\bar\rho\|_{L^\infty}.
\end{split}
\]
Further note that since $\tilde V\geq 0$, we have that $\tilde V(x)\geq \tilde V_\eps(x)$ so this directly implies that
\[
F(\mu_N)\leq F_\eps(\mu_N)+C\,\|\bar\rho\|_{L^\infty}\,\eps^{1/2kp^*}+\frac{1}{N^2}\,\sum_{i\neq j} \log\frac{\eps}{|x_i-x_j|}\,\ind_{|x_i-x_j|\leq\eps}.
\]
Hence we obtain that
\begin{equation}
\begin{split}
& Z_N  \leq e^{C\,N\,\eps^{1/2kp^*}\,\|\bar\rho\|_{L^\infty}} \\
& \hskip2cm  \int_{\Pi^{dN}} \bar\rho_N\,e^{N\,\gamma\,F_\eps(\mu_N)+\frac{\gamma}{N}\,\sum_{i\neq j} \log\frac{\eps}{|x_i-x_j|}\,\ind_{|x_i-x_j|\leq\eps}}\,\ud X^N.
\end{split} \label{ZNZNeps1}
\end{equation}
By general H\"older inequality
\[
\begin{split}
  & \int_{\Pi^{dN}} \bar\rho_N\,e^{N\,\gamma\,F_\eps(\mu_N)+\frac{\gamma}{N}\,\sum_{i\neq j} \log\frac{\eps}{|x_i-x_j|}\,\ind_{|x_i-x_j|\leq\eps}}\,\ud X^N\\
  &\ =\int_{\Pi^{dN}} \bar\rho_N\,e^{N\,\gamma\,F_\eps(\mu_N)}\Pi_{i=1}^N e^{\frac{\gamma}{N}\,\sum_{j\neq i} \log\frac{\eps}{|x_i-x_j|}\,\ind_{|x_i-x_j|\leq\eps}}\,\ud X^N\\
  &\ \leq \Pi_{i=1}^N \left(\int_{\Pi^{dN}} \bar\rho_N\,e^{N\,\gamma\,F_\eps(\mu_N)} e^{\gamma\,\sum_{j\neq i} \log\frac{\eps}{|x_i-x_j|}\,\ind_{|x_i-x_j|\leq\eps}}\,\ud X^N\right)^{1/N}.
\end{split}
\]
Using the symmetry of $\rho_N$, we can simply keep one of the factors, for example with $i=1$ yielding
\[
\begin{split}
  &\int_{\Pi^{dN}} \bar\rho_N\,e^{N\,\gamma\,F_\eps(\mu_N)+\gamma\,\sum_{i\neq j} \log\frac{\eps}{|x_i-x_j|}\,\ind_{|x_i-x_j|\leq\eps}}\,\ud X^N\\
  &\ \leq\int_{\Pi^{dN}} \bar\rho_N\,e^{N\,\gamma\,F_\eps(\mu_N)} e^{\gamma\,\sum_{j>1} \log\frac{\eps}{|x_1-x_j|}\,\ind_{|x_1-x_j|\leq\eps}}\,\ud X^N=I.
  \end{split}
\]
Combined with \eqref{ZNZNeps1}, this gives
\begin{equation}
Z_N\leq e^{C\,N\,\eps^{1/2kp^*}\,\|\bar\rho\|_{L^\infty}}\,I.
  \label{ZNZNeps2}
\end{equation}

\bigskip

$\bullet$ {\em Step~3: Introducing the functional with a ``frozen'' test particle.} Since we will take $\eps$ very small, it is natural to expect that there will only be a limited number of indices $j$ s.t. $|x_1-x_j|\leq \eps$. To make that precise, we introduce the number $n$ of such indices $j$. Up to permutations, we may also assume that those are $j=2, \cdots,  n+1$ and we decompose accordingly
\begin{equation}
\begin{split}
  &I=\int_{\Pi^{dN}} \bar\rho_N\,e^{N\,\gamma\,F_\eps(\mu_N)} e^{\gamma\,\sum_{j>1} \log\frac{\eps}{|x_1-x_j|}\,\ind_{|x_1-x_j|\leq\eps}}\,\ud X^N\\
 & =\sum_{n=0}^{N-1}\binom{N-1}{n}\int_{\Pi^d}\ud x_1\,\int_{|x_1-x_i|\leq \eps,\;\forall i=2\dots n+1}\ud x_2\dots\ud x_{n+1}\\
  &\int_{|x_1-x_j|>\eps,\;\forall j>n+1} \ud x_{n+2}\dots \ud x_{N}\,\bar\rho_N\,e^{N\,\gamma\,F_\eps(\mu_N)} e^{\gamma\,\sum_{i=2}^{n+1} \log\frac{\eps}{|x_1-x_i|}}.
  \end{split}\label{ZNZNeps3}
\end{equation}
In this expression, one should first observe that $F_\eps(\mu_N)$ mostly do not depend on $x_2, \cdots,  x_{n+1}$. For this, denote $\mu_N^{1,n}$ the empirical measure obtained by replacing all $x_2\dots x_{n+1}$ by $x_1$
\[
\mu_N^{1,n}=\frac{n+1}{N}\delta(x-x_1)+\frac{1}{N}\sum_{j>n+1} \delta(x-x_j),
\]
and denote accordingly
\[
F^{1,n}_\eps(\mu_N)=F_\eps(\mu_N^{1,n}).
\]
Now $F^{1,n}_\eps(\mu_N)$ does not depend on $x_2\dots x_{n+1}$ but since $x_i$ and $x_1$ are close if $i=2\dots n+1$, we still expect it to be close to $F_\eps(\mu_N)$. Our next steps aim at making this precise.

\bigskip

$\bullet$ {\em Step~4: Comparing the potential $V_\eps$ for close particles.} We derive here the following estimate for $i=2\dots n+1$
\begin{equation}
  |\bar V_\eps(x_i-x_j)-\bar V_\eps(x_1-x_j)|\leq  %\frac{C}{|\log \eps|}\,\bar V_\eps(x_1-x_j)
  C_d\,\eps^{1/2}+\left\{\begin{aligned} &\log 2+\frac{C}{\eta}\,\eps\ \mbox{if}\ |x_1-x_j|\leq \eps^{1/2},\\
  &2\,\eps^{1/2}+\frac{C}{\eta}\,\eps \quad\mbox{if}\ |x_1-x_j|\geq \eps^{1/2}.
\end{aligned}
  \label{deltaVeps}
\right.
\end{equation}
We first recall that $k>1/2$. Hence for $|x_i-x_1|\leq \eps$, we  have that if $|x_i-x_j|\leq ({\eps^{1/2k}})/{2}$, then $|x_1-x_j|\leq \eps^{1/2k}$ and then
\[
\tilde V_\eps(x_i-x_j)=\tilde V_\eps(x_1-x_j)=0.
\]
Otherwise we necessarily have $|x_i-x_j|,\;|x_1-x_j|\geq  ({\eps^{1/2k}}/{4})$
and recalling \eqref{nablatildeV}, we obtain
\begin{equation}
\begin{split}
|\tilde V_\eps(x_i-x_j)-\tilde V_\eps(x_1-x_j)|
& \leq C_d\,\eps\,\max_{|z|\geq \frac{\eps^{1/2k}}{2}} |\nabla \tilde V(z)|\\
& \leq C_d\,\frac{\eps}{\eps^{1/2}}=C_d\,\eps^{1/2}.\label{deltatildeVeps}
\end{split}
\end{equation}
On the other hand for $|x_1-x_i|\leq \eps$, observe that we always have that
\[
\frac{1}{2}\,\max(\eps,|x_1-x_j|)\leq \max(\eps,|x_i-x_j|)\leq 2\,\max(\eps,|x_1-x_j|).
\]
Indeed one has trivially
\[
\max(\eps,|x_i-x_j|)\leq \max(\eps,|x_1-x_j|)+\eps\leq 2\,\max(\eps,|x_1-x_j|),
\]
and of course
\[
\max(\eps,|x_1-x_j|)\leq \max(\eps,|x_i-x_j|)+\eps\leq 2\,\max(\eps,|x_i-x_j|). 
\]
This implies that
\[
\left|\log\frac{1}{\max(\eps,|x_i-x_j|)}- \log\frac{1}{\max(\eps,|x_1-x_j|)}\right|\leq \log 2.
\]
If $|x_1-x_j|\geq \eps^{1/2}$ then we can be more precise as
\[
|\log a-\log b|=\int_{[a,\;b]} \frac{dx}{x} \leq |a-b|\,\max(1/a,1/b).
\]
In this case of course we have that $|x_i-x_j|\geq |x_1-x_j|-\eps\geq \eps^{1/2}/2$ so
\[
\left|\log\frac{1}{\max(\eps,|x_i-x_j|)}- \log\frac{1}{\max(\eps,|x_1-x_j|)}\right|\leq 2\,\eps^{-1/2}\,|x_1-x_i|\leq 2\,\eps^{1/2}.
\]
To summarize one has that
\[\begin{aligned}
& \left|\log\frac{1}{\max(\eps,|x_i-x_j|)}- \log\frac{1}{\max(\eps,|x_1-x_j|)}\right|\\
&\qquad\leq \left\{\begin{aligned} %\frac{C}{|\log \eps|}\,\log\frac{1}{\max(\eps,|x_1-x_j|)},
&\log 2\quad\mbox{if}\ |x_1-x_j|\leq \eps^{1/2}\\
& 2\,\eps^{1/2}\quad\mbox{if}\ |x_1-x_j|\geq \eps^{1/2}.\\
\end{aligned}\right.
\end{aligned}
\]
%for some constant $C$ and provided that $|x_1-x_j|\leq 1/e$. % ({\bf I changed $1$ to $1/e$}. Here the control can be improved to $C\eps/\max\{ \eps, |x_1 - x_j| \}$. )
And since the truncation $\chi$ is smooth, this yields
\begin{equation}
  |\bar V_\eps(x_i-x_j)-\bar V_\eps(x_1-x_j)|\leq  %\frac{C}{|\log \eps|}\,\bar V_\eps(x_1-x_j)
  \left\{\begin{aligned} &\log 2+\frac{C}{\eta}\,\eps\ \mbox{if}\ |x_1-x_j|\leq \eps^{1/2},\\
  &2\,\eps^{1/2}+\frac{C}{\eta}\,\eps \quad\mbox{if}\ |x_1-x_j|\geq \eps^{1/2}.
\end{aligned}
  \label{deltabarVeps}
\right.
\end{equation}
%Finally if $j\leq n+1$ then $|x_i-x_j|,\;|x_1-x_j|\leq 2\,\eps$ so that
%\[
%|V_\eps(x_i-x_j)-V_\eps(0)|\leq |\bar V_\eps(x_i-x_j)-\bar V_\eps(0)|\leq \log 2.
%\]
Combining \eqref{deltabarVeps} with \eqref{deltatildeVeps} proves \eqref{deltaVeps}.

\bigskip

$\bullet$ {\em Step~5: Introducing the intermediary scale $\eps^{1/2}$.}
The inequality \eqref{deltaVeps} shows that those $j$ s.t. $|x_1-x_j|\leq \eps^{1/2}$ will be playing a different role from those $j$ s.t. $|x_1-x_j|> \eps^{1/2}$. This leads us to introduce $n_{1/2}$ the number of $j$ s.t. $\eps\leq |x_1-x_j|\leq \eps^{1/2}$. Using again developments such as \eqref{development}, we have that
\[
\begin{split}
  &|F_\eps^{1,n}(\mu_N)-F_\eps(\mu_N)|\leq C\,\eps\,\|\bar\rho\|_{L^\infty}\\
  &\ +\frac{1}{N^2 }\,\sum_{i=2}^{n+1}\sum_{j\geq n+2} |V_\eps(x_i-x_j)-V_\eps(x_1-x_j)|\\
  &\ +\frac{1}{N^2}\sum_{i=2}^{n+1}\sum_{j=2, j\neq i}^{n+1} |V_\eps(x_i-x_j)-V_\eps(0)|.
\end{split}
  \]
Therefore, one has%, for $\delta_\eps=\frac{C}{|\log \eps|}$,
\begin{equation}
F_\eps(\mu_N)\leq F_\eps^{1,n}(\mu_N)+C\,\eps\,(\|\bar\rho\|_{L^\infty}+ \eta^{-1})+C\,\eps^{1/2} +\frac{n^2+n\,n_{1/2}}{N^2}\,\log 2. \label{Fepsupper}
\end{equation}
Note that by symmetry between $x_1$ and $x_i$ in the above bounds, we also have the symmetric
\begin{equation}
F_\eps^{1,n}(\mu_N)\leq F_\eps(\mu_N)+C\,\eps\,(\|\bar\rho\|_{L^\infty}+\eta^{-1}) +C\,\eps^{1/2} +\frac{n^2+n\,n_{1/2}}{N^2}\,\log 2, \label{Fepslower}
\end{equation}
which we will use later. Going back to \eqref{ZNZNeps3} and using \eqref{Fepsupper}, we may again freely assume that the $j$ s.t. $\eps\leq |x_1-x_j|\leq \eps^{1/2}$ are those indices from $n+2$ to $n+n_{1/2}+1$. We find that
\begin{equation}\begin{split}
  &I=\int_{\Pi^{dN}} \bar\rho_N\,e^{N\,\gamma\,F_\eps(\mu_N)} e^{\gamma\,\sum_{j>1} \log\frac{\eps}{|x_1-x_j|}\,\ind_{|x_1-x_j|\leq\eps}}\,\ud X^N\\
&\leq e^{C\,N\,(\eps\,(\|\bar\rho\|_{L^\infty}+\eta^{-1})+\eps^{1/2})}\sum_{n=0}^{N-1}\binom{N-1}{n} \sum_{n_{1/2}=0}^{N-1-n} \binom{N-n-1}{n_{1/2}}\,2^{n+n_{1/2}}\,J_{n,n_{1/2}}\\
\end{split}\label{boundIJ}
\end{equation}
with
\[
\begin{split}
&J_{n,n_{1/2}}= \int_{\Pi^d}\ud x_1\int_{|x_1-x_i|\leq \eps,\;\forall i=2\dots n+1}\!\!\!\!\!\! \ud x_2\dots\ud x_{n+1}\\
& \int_{\eps<|x_1-x_j|\leq \eps^{1/2},\;\forall j=n+2\dots n+1+n_{1/2}}\!\!\!\!\!\! \ud x_{n+2}\dots\ud x_{n+n_{1/2}+1}\\
  & \int_{|x_1-x_k|>\eps^{1/2},\;\forall k>n+1+n_{1/2}} \!\!\!\!\!\!\!\ud x_{n+n_{1/2}+2}\cdots \ud x_{N}\,\bar\rho_N\,e^{N\,\gamma\,F_\eps^{1,n}(\mu_N)} e^{\gamma\,\sum_{i=2}^{n+1} \log\frac{\eps}{|x_1-x_i|}}.
\end{split}
\]

\bigskip

$\bullet$ {\em Step~6: Bounding $J_{n,n_{1/2}}$ back in terms of $F_\eps$.}
We first recall that $F_\eps^{1,n}(\mu_N)$ does not depend on $x_2, \cdots,  x_{n+1}$ since it only depends on $\mu_N^{1,n}$. Hence the integrals in $J_{n,n_{1/2}}$ nicely separate. Moreover since $\gamma<d$
\[
\begin{split}
  & \int_{|x_1-x_i|\leq \eps,\;\forall i=2\dots n+1}\!\!\!\!\!\! \ud x_2\dots\ud x_{n+1} e^{\gamma\,\sum_{i=2}^{n+1} \log\frac{\eps}{|x_1-x_j|}}\\
  &\ =\left(\int_{|x_1-y|\leq \eps} \frac{\eps^\gamma}{|x_1-y|^\gamma}\,dy\right)^n\\
 &\ =\bar C^n\,\eps^{dn}=\bar C^n\,\int_{|x_1-x_i|\leq \eps,\;\forall i=2\dots n+1}\!\!\!\!\!\! \ud x_2\dots\ud x_{n+1},
  \end{split}
\]
with $\bar C\sim \frac{1}{d-\gamma}$.  
Therefore, one may obtain that
\[\begin{split}
%  &\int_{\Pi^{dN}} \bar\rho_N\,e^{N\,\gamma\,F_\eps(\mu_N)} e^{\gamma\,\sum_{j>1} \log\frac{\eps}{|x_1-x_j|}\,\ind_{|x_1-x_j|\leq\eps}}\,\ud X^N\\
%&\ \leq
%e^{C\,N\,\eps\,(\|\bar\rho\|_{L^\infty}+\eta^{-1})+C\,N\,\eps^{1/2}}\,\sum_{n=0}^{N-1}\binom{N-1}{n} \sum_{n_{1/2}=0}^{N-1-n} \binom{N-n-1}{n_{1/2}}\,
&J_{n,n_{1/2}}\leq
\bar C^n\,\int_{\Pi^d}\ud x_1\int_{|x_1-x_i|\leq \eps,\;\forall i=2\dots n+1}\!\!\!\!\!\! \ud x_2\dots\ud x_{n+1}\\
&\ \int_{\eps<|x_1-x_j|\leq \eps^{1/2},\;\forall j=n+2\dots n+1+n_{1/2}}\!\!\!\!\!\! \ud x_{n+2}\dots\ud x_{n+n_{1/2}+1}\\
  &\quad\int_{|x_1-x_k|>\eps^{1/2},\;\forall k>n+1+n_{1/2}} \ud x_{n+2}\cdots \ud x_{N}\,\bar\rho_N\,e^{N\,\gamma\,F_\eps^{1,n}(\mu_N)}.
\end{split}
\]
Now that we have used the key property of $F_\eps^{1,n}$, it is more convenient to change it back to $F_\eps(\mu_N)$ by using the reverse inequality \eqref{Fepslower},
\begin{equation}
J_{n,n_{1/2}}\leq e^{C\,N\,\eps\,(\|\bar\rho\|_{L^\infty}+\eta^{-1})+C\,N\,\eps^{1/2}}\,2^{n+n_{1/2}}\,\bar C^n\,\bar J_{n,n_{1/2}},\label{boundJbarJ}
\end{equation}
where
\[\begin{split}
%  &\int_{\Pi^{dN}} \bar\rho_N\,e^{N\,\gamma\,F_\eps(\mu_N)} e^{\gamma\,\sum_{j>1} \log\frac{\eps}{|x_1-x_j|}\,\ind_{|x_1-x_j|\leq\eps}}\,\ud X^N\\
%&\ \leq e^{C\,N\,\eps\,(\|\bar\rho\|_{L^\infty}+\eta^{-1})+C\,N\,\eps^{1/2}}\,\sum_{n=0}^{N-1}\binom{N-1}{n} \sum_{n_{1/2}=0}^{N-1-n} \binom{N-n-1}{n_{1/2}}\,
&\bar J_{n,n_{1/2}}\leq \int_{\Pi^d}\ud x_1\int_{|x_1-x_i|\leq \eps,\;\forall i=2\dots n+1}\!\!\!\!\!\! \ud x_2\dots\ud x_{n+1}\\
&\ \int_{\eps<|x_1-x_j|\leq \eps^{1/2},\;\forall j=n+2\dots n+1+n_{1/2}}\!\!\!\!\!\! \ud x_{n+2}\dots\ud x_{n+n_{1/2}+1}\\
  &\quad\int_{|x_1-x_k|>\eps^{1/2},\;\forall k>n+1+n_{1/2}} \ud x_{n+n_{1/2}+2}\cdots \ud x_{N}\,\bar\rho_N\,e^{N\,\gamma\,F_\eps(\mu_N)}.
\end{split}
\]

\bigskip

$\bullet$ {\em Step~7:  The final bound on $I$: Reconstructing the full integral.}
We now wish to ``undo'' the decompositions performed at step~5 and earlier at step~3 where we introduced $n_{1/2}$ and $n$. In other words, using \eqref{boundIJ} and \eqref{boundJbarJ}, we aim at expressing
\[
\begin{split}
  I\leq  &e^{C\,N\,\eps\,(\|\bar\rho\|_{L^\infty}+\eta^{-1})+C\,N\,\eps^{1/2}}\\
  &\qquad\qquad\sum_{n=0}^{N-1}\binom{N-1}{n} \sum_{n_{1/2}=0}^{N-1-n} \binom{N-n-1}{n_{1/2}}\,4^{n+n_{1/2}}\,\bar C^n\,\bar J_{n,n_{1/2}},
\end{split}
\]
in terms of the full integral over $\bar\rho_N\,e^{N\,\gamma\,F_\eps(\mu_N)}$.
  Unfortunately we cannot directly reverse the decomposition because of the extra factor $\bar C^n\,4^{n+n_{1/2}}$ in the sums over $n$ and $n_{1/2}$.

  We do expect the probability of $n$ or $n_{1/2}$ being of order $N$ to be extremely small of course and this issue can be solved by performing a last H\"older estimate at exponent $\gamma'/\gamma$ for some~$\gamma'$. 
\[\begin{split}
&%\int_{\Pi^{dN}} \bar\rho_N\,e^{N\,\gamma\,F_\eps(\mu_N)} e^{\gamma\,\sum_{j>1} \log\frac{\eps}{|x_1-x_j|}\,\ind_{|x_1-x_j|\leq\eps}}\,\ud X^N
I\leq e^{C\,N\,\eps\,(\|\bar\rho\|_{L^\infty}+\eta^{-1})+C\,N\,\eps^{1/2}}\, R_1^{1-\gamma/\gamma'}\,R_2^{\gamma/\gamma'},\\
\end{split}
\]
where
\[
\begin{split}
  &R_1=\sum_{n=0}^{N-1}\binom{N-1}{n}\,\sum_{n_{1/2}=0}^{N-1-n} \binom{N-n-1}{n_{1/2}}\,4^{\Lambda(n+n_{1/2})} \bar C^{\Lambda\,n}\,\eps^{d\,n}\,\eps^{d\,n_{1/2}/2},\\
  \end{split}
\]
with $\Lambda=\left(1-\frac{\gamma}{\gamma'}\right)^{-1}$.
On the other hand, we have 
\[
\begin{split}
  &R_2= \sum_{n=0}^{N-1}\binom{N-1}{n}  \sum_{n_{1/2}=0}^{N-1-n} \binom{N-n-1}{n_{1/2}}\\
  &\quad\int_{\Pi^d}\ud x_1\int_{|x_1-x_i|\leq \eps,\;\forall i=2\dots n+1}\!\!\!\!\!\! \ud x_2\dots\ud x_{n+1}\\
&\quad \int_{\eps<|x_1-x_j|\leq \eps^{1/2},\;\forall j=n+2\dots n+1+n_{1/2}}\!\!\!\!\!\! \ud x_{n+2}\dots\ud x_{n+n_{1/2}+1}\\
&\qquad \int_{|x_1-x_k|>\eps^{1/2},\;\forall k>n+1} \ud x_{n+n_{1/2}+2}\cdots \ud x_{N}\,\bar\rho_N\,e^{N\,\gamma'\,F_\eps(\mu_N)}.
\end{split}
\]
We easily have that
\[
R_1\leq (1+2\,\eps)^N,
\]
 provided that $4^\Lambda\,\bar C^\Lambda\,\eps^d\leq \eps$ and $4^\Lambda\,\eps^{d/2}\leq \eps$. As for $R_2$, we may now easily reverse the decomposition implemented in steps~3 and 5 to find
\[\begin{split}
%&\sum_{n=0}^{N-1}\binom{N-1}{n} \sum_{n_{1/2}=0}^{N-1-n} \binom{N-n-1}{n_{1/2}}\int_{\Pi^d}\ud x_1\int_{|x_1-x_i|\leq \eps,\;\forall i=2\dots n+1}\!\!\!\!\!\! \ud x_2\cdots\ud x_{n+1}\\
%&\ \int_{\eps<|x_1-x_j|\leq \eps^{1/2},\;\forall j=n+2\dots n+1+n_{1/2}}\!\!\!\!\!\! \ud x_{n+2}\dots\ud x_{n+n_{1/2}+1}\\
%&\quad \int_{|x_1-x_k|>\eps^{1/2},\;\forall k>n+1} \ud x_{n+2}\cdots  \ud x_{N}\,\bar\rho_N\,e^{N\,\gamma'\,F_\eps(\mu_N)}\\
&R_2 =\int_{\Pi^{dN}} \bar\rho_N \,e^{N\,\gamma'\,F_\eps(\mu_N)}\,\ud X^N=Z_{N,\eps}(\gamma').
\end{split}
\]
Therefore we obtain that 
\begin{equation}
\begin{split}
  &I=\int_{\Pi^{dN}} \bar\rho_N\,e^{N\,\gamma\,F_\eps(\mu_N)} e^{\gamma\,\sum_{j>1} \log\frac{\eps}{|x_1-x_j|}\,\ind_{|x_1-x_j|\leq\eps}}\,\ud X^N\\
  &\ \leq e^{C\,N\,\eps\,(\|\bar\rho\|_{L^\infty}+\eta^{-1})+C\,\eps^{1/2}}\,(1+2\,\eps)^N\,(Z_{N,\eps}(\gamma'))^{\frac{\gamma}{\gamma'}}.
\end{split}\label{finalboundI}
\end{equation}

\bigskip

$\bullet$ {Final Step: Using Prop. \ref{largedeviation} and Lemma \ref{I(F)log}.}
Let us first gather all our estimates: By inserting \eqref{finalboundI} into  \eqref{ZNZNeps2}, we have proved so far that
\begin{equation}
Z_N(\gamma)\leq e^{C\,N\,\eps\,(\|\bar\rho\|_{L^\infty}+\eta^{-1})+C\,N\,\eps^{1/2kp^*}+C\,\eps^{1/2}}\,(1+2\,\eps)^N\,(Z_{N,\eps}(\gamma'))^{\frac{\gamma}{\gamma'}}.\label{ZNZNepsfinal}
  \end{equation}
It only remains to bound $Z_{N,\eps}(\gamma')$. Note that since $V_\eps$ is smooth, for any convolution kernel $L$, we have that
\[
|V_\eps-L_{\eps'}\star L_{\eps'}\star V_\eps|\leq C\,\frac{\eps'}{\eps},
\]
and consequently
\[
\frac{1}{N}\,\log Z_{N,\eps}(\gamma')\leq C\,\frac{\eps'}{\eps}+\frac{1}{N}\,\log \int_{\Pi^{dN}}\,\bar\rho_N\,e^{N\,\gamma\,F_\eps(L_{\eps'}\star \mu_N)}\,dX^N.
\]
The estimate is now straightforward thanks to the Prop.~\ref{largedeviation} which directly shows that
\[
\begin{split}
  &\frac{1}{N}\,\log Z_{N,\eps}(\gamma')\leq I(\gamma'\,F_\eps)+ \frac{C}{N^{\frac{1}{d+1}}\,{\eps'}^{\frac{d}{d+1}}}\,(\log N+|\log \eps'|+\|\log\bar\rho\|_{L^\infty})\\
  &\qquad+C\,\eps'\,\|\log\bar \rho\|_{W^{1,\infty}}+C\,\frac{\eps'}{\eps}.
\end{split}
\]
From \eqref{ZNZNepsfinal}, and since we will take $\eps'<\eps$, this implies that
\[
\begin{split}
  &\frac{1}{N}\,\log Z_{N}(\gamma)\leq I(\gamma'\,F_\eps) + \frac{C}{N^{1/(d+1)}\,{\eps'}^{d/(d+1)}}\,(\log N+|\log \eps'|+\|\log\bar\rho\|_{L^\infty})\\
  &\qquad +C\,\eps\,(\eta^{-1}+\|\log\bar \rho\|_{W^{1,\infty}} + \| \bar \rho\|_{L^\infty} )+C\,\eps^{1/2}+C\,N\,\eps^{1/2kp^*}+C\,\frac{\eps'}{\eps}.
\end{split}
\]
We may estimate $I(\gamma'\,F_\eps)$ through Lemma~\ref{I(F)log}. We observe that as long as $\gamma'<d$, we indeed have that $V_\eps(x)\geq c\,\log|x|$ for some $c<d$. On the other hand, since $V\in L^p$ and $\mbox{supp}\,V\in B(0,\eta)$, by choosing $\eta$ small enough, we can guarantee that $\|V\|_{L^1}\leq \delta$.
Now simply taking for example $\eps'=N^{-1/(2d+1)}$ and $\eps=\sqrt{\eps}$, we deduce that provided $\gamma'<d$, there exists some $\theta>0$ s.t.
\[\begin{split}
\frac{1}{N}\,\log Z_{N,\eps}(\gamma')\leq &\frac{C}{N^{1/(2d+1)}}\,(\log N+\|\log\bar\rho\|_{W^{1,\infty}}+\eta^{-1})\\
&+\frac{C}{N^\theta}.
\end{split}
\]
Note that of course $\eps>>N^{-1/d}$. On the other hand we need $N$ large enough so that $\eps^{d-1}=N^{-(d-1)/2\,(2d+1)}\leq \bar C^{-\Lambda}$ which gives the condition on $N$ in the proposition. 
%%%%%%%%%%%%%%%%%%%%%%%%%%%%%%%%%%%%%%
%%%%%%%%%%%%%%%%%%%%%%%%%%%%%%%%%%%%%%%%%%%%%% 
 \section{Appendix}
 %%%%%%%%%%%%%%%%%%%%%%%%%%%%%%%%%%%%%%%%%%
 %%%%%%%%%%%%%%%%%%%%%%%%%%%%%%%%%%%%%%%%%%%
 % \begin{remark} It is interesting to note the importance of the free energy to cancel the bad quantity which appeared in the defintion of entropy solution in  \cite{JaWa2} and asked for some restrictions on the interaction kernels.  \end{remark}
 %%%%%%%%%%%%%%%%%%%%%%%%%%%%%%%%%%%%%%%%%%%%%%%%%%%%%%%%%%%%%%%%%%%%%%%%%%%%%
\subsection{Large deviation type estimates in \cite{JaWa2}}
%%%%%%%%%%%%%%%%%%%%%%%%%%%%%%%%%%%%%%%%%%%%%%%%%%%%%%%%%%%%%%%%%%%%%%%%%%%%%%
We first recall and prove Lemma~\ref{duality} which was Lemma 1 in \cite{JaWa2} 
\begin{lemma}\label{duality2}
  For any $\rho_N,\;\bar\rho_N\in \mathcal{P}(\T^{dN})$, any test function $\psi \in L^\infty(\Pi^{dN})$, one has that for any  $\alpha>0$, 
  \[
\int_{\T^{dN}} \psi(X^N)\,d\rho_N\leq \frac{1}{\alpha} \,  \frac{1}{N}\int d\rho_N\,\log \frac{\rho_N}{\bar\rho_N}+ \frac{1}{\alpha} \frac{1}{N}\log \int_{\T^{dN}} e^{ \alpha   N\,\psi(X^N)}\,d\bar\rho_N. 
  \]
  \end{lemma}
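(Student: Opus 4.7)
The plan is to recognize this as the standard Gibbs variational principle (Donsker--Varadhan duality between relative entropy and the log-Laplace transform), and prove it in one line by introducing a tilted reference measure.

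First I would set $Z = \int_{\T^{dN}} e^{\alpha N \psi(X^N)} \, d\bar\rho_N$, which is finite since $\psi \in L^\infty$, and introduce the tilted probability measure
\[
  d\nu_N = \frac{1}{Z} \, e^{\alpha N \psi(X^N)} \, d\bar\rho_N.
\]
Then $\rho_N$ is absolutely continuous with respect to $\nu_N$ whenever it is absolutely continuous with respect to $\bar\rho_N$ (which we may assume; otherwise the right-hand side is $+\infty$ and there is nothing to prove). A direct computation gives
\[
  \log \frac{d\rho_N}{d\nu_N} = \log \frac{d\rho_N}{d\bar\rho_N} - \alpha N \psi(X^N) + \log Z.
\]

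Next I would integrate this identity against $\rho_N$ and use the fact that $\rho_N$ is a probability measure so that the constant $\log Z$ integrates to itself, yielding
\[
  \int_{\T^{dN}} \log \frac{d\rho_N}{d\nu_N} \, d\rho_N = \int_{\T^{dN}} \log \frac{d\rho_N}{d\bar\rho_N} \, d\rho_N - \alpha N \int_{\T^{dN}} \psi \, d\rho_N + \log Z.
\]
The left-hand side is the relative entropy $H(\rho_N \vert \nu_N)$, which by Jensen's inequality (or the elementary fact that $x \log x - x + 1 \ge 0$) is nonnegative. Rearranging gives
\[
  \alpha N \int \psi \, d\rho_N \le \int \log \frac{d\rho_N}{d\bar\rho_N} \, d\rho_N + \log Z,
\]
and dividing by $\alpha N$ is exactly the claim.

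There is no real obstacle here: the only thing to watch is the normalization, since the statement carries an explicit $\frac{1}{N}$ in front of the relative entropy term (matching the normalization of $\mathcal H_N$ used throughout the paper) and an $\alpha N$ inside the exponential. These are compatible with the tilting $d\nu_N \propto e^{\alpha N \psi} d\bar\rho_N$ and simply pop out of the calculation above. As an alternative presentation, one can derive the same inequality from the Young-type inequality $ab \le a \log a - a + e^b$ applied to $a = d\rho_N/d\bar\rho_N$ and $b = \alpha N \psi - \log Z$, but the tilted-measure argument is cleanest.
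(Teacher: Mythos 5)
Your proof is correct and is essentially identical to the paper's: the paper also introduces the tilted density $f=\lambda^{-1}e^{N\psi}\bar\rho_N$ (your $\nu_N$, with $\lambda=Z$), and invokes nonnegativity of the relative entropy $\int \rho_N\log(\rho_N/f)\ge 0$ before expanding and rearranging. The only cosmetic difference is that the paper first reduces to $\alpha=1$ without loss of generality, whereas you carry $\alpha$ through explicitly.
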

\begin{proof}
Without loss of generality, we assume that $\alpha =1$.  Define
\[
f=\frac{1}{\lambda}\,e^{N\,\psi}\,\bar\rho_N,\quad \lambda=\int_{\Pi^{d\, N}} d\bar\rho_N\,e^{N\,\psi}.
\]
Notice that $f$ is a probability density as $f\geq 0$ and $\int f=1$. Hence by the convexity of the entropy
\[
\frac{1}{N}\int_{\Pi^{d \,N}} \rho_N\,\log f\,dX^N\leq \frac{1}{N}\int_{\Pi^{d\,N}} \rho_N\,\log \rho_N\,dX^N.
\]
Expanding the left-hand side 
\[\begin{split}
\frac{1}{N}\!\!\int_{\Pi^{d\,N}}\!\! \rho_N\,\log f\,dX^N&=\int_{\Pi^{d\,N}}\!\! \rho_N\,\Phi\,dX^N+\frac{1}{N}\int_{\Pi^{d\,N}}\!\! \rho_N\,\log \bar\rho_N\,d X^N-\frac{\log\lambda}{N},
\end{split}\]
gives the desired inequality.
  \end{proof}
Lemma \ref{duality2} directly connects bounds on quantities like $\int \psi(X^N) d \rho_N $ to the relative entropy $\mathcal{H}_N$ and estimates on quantities that can be seen as partition functions
\begin{equation}\displaystyle 
\int_{\T^{dN}} e^{\displaystyle 
 N\,\int_{\Pi^{2d}} f(x,y)\,(d\mu_N-d\bar\rho)^{\otimes 2}}\,\bar\rho^{\otimes N}\,dX^N. \label{partition}
  \end{equation}
It is hence natural to try to use large deviation type of tools to bound \eqref{partition}. Note however that our goals here are different from classical large deviation approaches: We do not try to calculate the limit as $N\to \infty$ of \eqref{partition} but instead to obtain bounds that are uniform in $N$.

We now recall the estimate from \cite{JaWa2} 
\begin{theorem}  {\rm (Theorem 4 in \cite{JaWa2})}.   Consider $\bar\rho \in L^1(\Pi^d) $ with $\bar\rho \geq 0$ and $\int_{\Pi^d} \bar\rho \ud x =1$. Consider further any $\phi(x,z)\in L^\infty$ with
\[
\gamma :=  C\,  \bigg( \sup_{p \geq 1} \frac{\|\sup_z |\phi(.,z)|\|_{L^p(\bar\rho \ud x)}}{p} \bigg)^2   <1,
\]  
where $C$ is a universal constant. Assume that $\phi$ satisfies the following cancellations
\begin{equation}\label{TwoCanLDP}
\int_{\Pi^d} \phi(x,z)\,\bar\rho(x)\,dx=0\quad\forall z, \qquad \int_{\Pi^d} \phi(x,z)\,\bar\rho(z)\,dz=0\quad\forall x.
\end{equation}
Then 
\begin{equation}
\label{ME2}
\int_{\Pi^{d\,N}} \bar{\rho}_N \exp\bigg(\frac{1}{N}\sum_{i,j=1}^N \phi(x_i,x_j)\bigg) \ud X^N \leq \frac{2}{1-\gamma} < \infty,
\end{equation}
where again $\bar \rho_N = \bar \rho^{\otimes N}$. \label{largedeviationJaWa}
\end{theorem}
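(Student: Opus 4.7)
The plan is to prove the bound by Taylor expanding the exponential, eliminating vanishing terms via the two cancellation conditions \eqref{TwoCanLDP}, and then controlling the remaining terms through the $L^p(\bar\rho\,dx)$ hypothesis on $\sup_z|\phi(\cdot,z)|$.

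First I would write
\[
\mathcal Z_N := \int_{\Pi^{dN}}\bar\rho_N\,\exp\Big(\tfrac{1}{N}\sum_{i,j=1}^N\phi(x_i,x_j)\Big)\,dX^N
\,=\,\sum_{k=0}^\infty\frac{1}{k!\,N^k}\,T_k,
\]
where
\[
T_k=\sum_{(\vec i,\vec j)\in\{1,\ldots,N\}^{2k}}\int_{\Pi^{dN}}\bar\rho_N\,\prod_{\ell=1}^k\phi(x_{i_\ell},x_{j_\ell})\,dX^N.
\]
Thinking of each pair $(i_\ell,j_\ell)$ as an oriented edge on $\{1,\ldots,N\}$, the first key step is to identify surviving terms. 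For a given configuration, let $V\subset\{1,\ldots,N\}$ be the set of active indices and $d_v$ the total slot-multiplicity of $v\in V$. If some $v\in V$ appears in exactly one edge and not as a self-loop, then integrating $\bar\rho(x_v)\,dx_v$ against that single factor of $\phi$ produces zero by \eqref{TwoCanLDP}. Hence only configurations in which every active vertex satisfies $d_v\geq 2$ contribute, which forces $s:=|V|\leq k$.

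Second, I would estimate each surviving integral by H\"older's inequality, yielding
\[
\Big|\int\bar\rho_N\prod_\ell\phi(x_{i_\ell},x_{j_\ell})\,dX^N\Big|
\leq \prod_{v\in V}\bigl\|\sup_z|\phi(\cdot,z)|\bigr\|_{L^{d_v}(\bar\rho\,dx)}^{d_v}.
\]
Plugging the hypothesis $\|\sup_z|\phi|\|_{L^p}\leq p\sqrt{\gamma/C}$ and using $\sum_{v\in V}d_v=2k$ bounds this by $(\gamma/C)^k\prod_v d_v^{d_v}$, which is the quadratic occurrence of the $L^p$ norm reflected in the definition of $\gamma$.

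The third and most delicate step is the combinatorial counting. The number of configurations giving rise to a fixed active set $V$ and degree sequence $(d_v)_{v\in V}$ is controlled by a multinomial $(2k)!/\prod_v d_v!$ times an orientation factor splitting each slot into an $i$- or $j$-position; one then sums over degree sequences with $\sum d_v=2k$ and $d_v\geq 2$, and over choices of $V$ via $\binom{N}{s}\leq N^s/s!$. Dividing by $N^k$ and using $s\leq k$ gives $N^{s-k}\leq 1$, while the Stirling bound $d_v^{d_v}\leq e^{d_v}d_v!$ cancels the $\prod d_v!$ in the denominator. The main obstacle will be to perform this sum cleanly enough to extract exactly one factor of $\gamma$ per edge rather than per pair of slots: this is where the degree-$\geq 2$ constraint is crucial, since it pairs up the $2k$ slots into at most $k$ vertices and exactly converts the $(\gamma/C)^k$ into a geometric factor $\gamma^k$ after absorbing the remaining constants into the universal constant $C$ appearing in the statement.

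Finally, combining these bounds produces $T_k/(k!\,N^k)\leq \gamma^k$ (possibly up to a single harmless overall factor of $2$), so that
\[
\mathcal Z_N\,\leq\,1+\sum_{k\geq 1}\gamma^k\,\leq\,\frac{2}{1-\gamma},
\]
uniformly in $N$, which is the desired estimate \eqref{ME2}.
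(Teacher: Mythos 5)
The paper does not actually prove this statement: Theorem~\ref{largedeviationJaWa} is simply recalled from Jabin--Wang (Inventiones, 2018) in the appendix, and only the reduction from Theorem~\ref{largedeviationJaWamod} to it is carried out. There is therefore no ``paper's own proof'' to compare against; I evaluate your proposal on its own terms, against the structure of the original Jabin--Wang argument, which your sketch does follow at the level of strategy (Taylor expansion, killing degree-one vertices via the two cancellations, H\"older, combinatorics).

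However, the H\"older step as you wrote it is wrong, and this is not a cosmetic issue. The hypothesis controls only $\psi(x):=\sup_z|\phi(x,z)|$, so the only pointwise bound available is $|\phi(x_{i_\ell},x_{j_\ell})|\le \psi(x_{i_\ell})$. Thus $\bigl|\prod_\ell \phi(x_{i_\ell},x_{j_\ell})\bigr|\le \prod_v\psi(x_v)^{m_v}$, where $m_v$ is the \emph{first-slot} multiplicity, with $\sum_v m_v=k$, and the resulting estimate is $\prod_{v:m_v\ge 1}\|\psi\|_{L^{m_v}(\bar\rho)}^{m_v}$, not $\prod_v\|\psi\|_{L^{d_v}(\bar\rho)}^{d_v}$. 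Your claimed inequality with the total degree $d_v$ (and $\sum d_v=2k$) is in fact \emph{false}: take $d=1$, $\bar\rho\equiv 1$, $\phi(x,y)=\eps^2\sin(2\pi x)\sin(2\pi y)$ with $\eps$ small. Both cancellations in \eqref{TwoCanLDP} hold, $\psi(x)=\eps^2|\sin 2\pi x|$, and $\gamma\lesssim\eps^4<1$. For the $k=2$ configuration $(i_1,j_1)=(v,w),(i_2,j_2)=(w,v)$ the integral equals $\eps^4/4$, whereas your right-hand side is $\|\psi\|_{L^2}^4=\eps^8/4$, which is strictly smaller for $\eps<1$. Replacing $d_v$ by $m_v$ fixes the inequality but changes the power you extract from $(\gamma/C)^{k}$ to $(\gamma/C)^{k/2}$; this is precisely why $\gamma$ is defined with a \emph{square} and why the final bound is $2/(1-\gamma)$ rather than $1/(1-\gamma)$ --- after the correct H\"older, one sums a geometric series in $\sqrt{\gamma}$ and uses $1/(1-\sqrt{\gamma})\le 2/(1-\gamma)$. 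As written, your step~3 and the claim $T_k/(k!\,N^k)\le\gamma^k$ do not hold and the combinatorial bookkeeping in step~4 (which must in particular track the multiplicities $m_v$, not just the $d_v$) would need to be redone around the corrected $(\gamma/C)^{k/2}$ factor.
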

We may directly deduce Theorem~\ref{largedeviationJaWamod} from this.  Given a  configuration $X^N=(x_1, \cdots, x_N)$ or $\mu_N = \frac{1}{N}\sum_{i=1}^N \delta_{x_i}$, just write
\[
N \int_{\Pi^{2d}} f(x, y) (d \mu_N - d \bar \rho)^{\otimes 2 } = \frac{1}{N} \sum_{i, j=1}^N \phi(x_i, x_j), 
\]
where 
\[
\begin{split} 
& \phi(x, y) = f(x, y) - \int_{\Pi^d} f(x, w) \bar \rho(w) d w  \\
&\qquad  - \int_{\Pi^d} f(z, y) \bar \rho(z) d z + \int_{\Pi^{2d}} f(z, w) \bar \rho(z) \bar \rho(w) d z d w.  
\end{split} 
\]
This new $\phi$ is a symmetrization of $f$ according to the reference  measure $\bar \rho$ and $\phi$ indeed satisfies two cancellation rules in Theorem~\ref{largedeviationJaWa}, i.e. 
\begin{equation}\label{mean0}
\int_{\Pi^d} \phi(x, y) \bar \rho(y) d y  = 0, \forall x, \quad \int_{\Pi^d } \phi(x, y) \bar \rho(x) d x = 0, \forall y. 
\end{equation} 
Finally $\|\phi\|_{L^\infty}\leq 4\,\|f\|_{L^\infty}$ so that we only need to take $\alpha$ small enough such that
\[
\gamma= C\,  \left( \sup_{p \geq 1} \frac{\|\sup_z |\alpha\,\phi(.,z)|\|_{L^p(\bar\rho \ud x)}}{p} \right)^2\leq 16\,C\,\alpha^2\,\|f\|_{L^\infty}^2<1.
\]
We also want to emphasize here that in the case $\phi\in L^\infty$, a  probabilistic proof of Theorem \ref{largedeviationJaWa} was recently obtained in \cite{LimLuNo}. 

 %%%%%%%%%%%%%%%%%%%%%%%%%%%%%%%%%%%%%%%%%%%%%%%%%%%%%%%%%%%%%%%%%%%%%%%%%
 \subsection{Existence of entropy solution for the Liouville equation with the Patlak-Keller-Segel interaction kernel in 2D}
%%%%%%%%%%%%%%%%%%%%%%%%%%%%%%%%%%%%%%%%%%%%%%%%%%%%%%%%%%%%%%%%%%%%%%%%%%%
For the reader's convenience, we prove here the existence of an entropy solution to Eq. \eqref{liouvilleN} in the case of the Patlak-Keller-Segel interaction kernel in dimension 2 namely
\begin{equation}
V(x) = \lambda \log |x| + V_e(x)\label{VKS}
\end{equation}
with $0<\lambda< 2 d \sigma $ and  $V_e$ a smooth correction so that $V$ is periodic.

In general obtaining well-posedness to \eqref{liouvilleN} may require a different set of assumptions than what we need to derive the mean field limit. In particular for existence as here, we need to be more specific than just asking $V(x)\geq \lambda\,\log|x|$ together with bounds on $|\nabla V|$. Moreover we emphasize that the argument below only shows existence of solutions to the Liouville equation ~\eqref{liouvilleN}. The existence of solutions to the original coupled SDE system~\eqref{sys} is much more difficult and essentially open out of the diffusion-dominated regime studied in \cite{FoJo}.
Here we prove
\begin{Prop}
  Assume that the initial data $\rho^0_N$ satisfies that
  \[
\int_{\Pi^{dN}} \rho^0_N  \log \Big(\frac{\rho^0_N}{G_{N,\eps}} \Big)<\infty.
\]
Then there exists a global in time  entropy solution to \eqref{liouvilleN} with $V$ given by~\eqref{VKS}.\label{propexistence}
\end{Prop}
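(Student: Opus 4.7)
The plan is to prove existence by regularization. Introduce a smooth approximation $V^\varepsilon$ of the singular potential $V$ (for instance by mollifying $\log|x|$ at scale $\varepsilon$, keeping the lower bound $V^\varepsilon(x)\ge \lambda\log|x|+C$ away from $0$ and $V^\varepsilon\in W^{2,\infty}$), with the corresponding Gibbs weight $G_{N,\varepsilon}$. Since $V^\varepsilon$ is smooth, classical parabolic theory applied to \eqref{liouvilleN} yields a unique global smooth solution $\rho_N^\varepsilon$ with $\rho_N^\varepsilon|_{t=0}=\rho_N^0$, which is a probability density on $\Pi^{dN}$ for all time. For each such smooth solution, a direct computation shows the \emph{equality}
\[
\int_{\Pi^{dN}}\rho_N^\varepsilon(t)\,\log\frac{\rho_N^\varepsilon(t)}{G_{N,\varepsilon}}\,dX^N
 +\sigma\sum_i\int_0^t\!\!\int \rho_N^\varepsilon\Bigl|\nabla_{x_i}\log\frac{\rho_N^\varepsilon}{G_{N,\varepsilon}}\Bigr|^2dX^N ds
 =\int_{\Pi^{dN}}\rho_N^0\log\frac{\rho_N^0}{G_{N,\varepsilon}}\,dX^N,
\]
obtained by testing \eqref{compactform} against $\log(\rho_N^\varepsilon/G_{N,\varepsilon})$. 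The assumption on the initial data, together with the monotone/dominated convergence of $V^\varepsilon\to V$, guarantees that the right-hand side is bounded uniformly in $\varepsilon$.

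Next I would extract uniform estimates from this identity. Splitting $\int\rho_N^\varepsilon\log\rho_N^\varepsilon-\frac{1}{2N\sigma}\sum_{i\neq j}\int V^\varepsilon(x_i-x_j)\rho_N^\varepsilon$, I would use the subcritical assumption $\lambda<2d\sigma$, exactly as in Lemma~\ref{I(F)log} (via the logarithmic Hardy--Littlewood--Sobolev inequality), to absorb the negative contribution of $-V^\varepsilon$ into the entropy and thereby deduce a uniform bound on $\int\rho_N^\varepsilon\log\rho_N^\varepsilon$, as well as on $\int\sum_{i\neq j}|\log|x_i-x_j||\rho_N^\varepsilon$ and on the Fisher information $\sum_i\int\rho_N^\varepsilon|\nabla_{x_i}\log(\rho_N^\varepsilon/G_{N,\varepsilon})|^2$. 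These bounds imply: (i) weak compactness of $\{\rho_N^\varepsilon\}$ in $L^1$ by the de la Vall\'ee Poussin criterion, (ii) equi-integrability of $\rho_N^\varepsilon|x_i-x_j|^{-1}$ in $L^1$ since $\nabla V\in L^p$ for $p<d$ combined with the $L\log L$ integrability of $\rho_N^\varepsilon$, and (iii) an $L^2_t H^{-1}_x$ bound on $\partial_t\rho_N^\varepsilon$ via \eqref{compactform}. An Aubin--Lions argument then gives strong convergence of (a subsequence of) $\rho_N^\varepsilon$ in $L^1((0,T)\times\Pi^{dN})$ to some limit $\rho_N$.

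With strong $L^1$ convergence and equi-integrability of the singular drift product, passage to the limit in the distributional formulation of \eqref{liouvilleN} is standard: the diffusion term is linear in $\rho_N^\varepsilon$, and $\nabla V^\varepsilon(x_i-x_j)\rho_N^\varepsilon$ converges to $\nabla V(x_i-x_j)\rho_N$ in $L^1$ thanks to (ii). Finally, to verify the entropy inequality \eqref{entropycontrol}, I would pass to the limit in the identity above: the entropy term $\int\rho_N\log(\rho_N/G_N)$ is weakly lower semi-continuous (convexity of $s\log s$ plus convergence of the $G_{N,\varepsilon}$ factor), and the Fisher information dissipation is lower semi-continuous with respect to weak convergence of $\rho_N^\varepsilon$ and of $\sqrt{\rho_N^\varepsilon}\nabla\log(\rho_N^\varepsilon/G_{N,\varepsilon})$ in $L^2$. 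The main obstacle is precisely this last step: controlling the entropy and the Fisher information simultaneously requires the log-HLS inequality through the subcritical bound $\lambda<2d\sigma$, together with a careful treatment of the approximating sequence $G_{N,\varepsilon}$, since $\log G_{N,\varepsilon}$ is only singular on the diagonal $\{x_i=x_j\}$, where we must rely on the uniform moment bound on $\sum_{i\neq j}|\log|x_i-x_j||\rho_N^\varepsilon$ obtained in Step~2.
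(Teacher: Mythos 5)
Your outline follows the same general scheme as the paper (regularize, derive a uniform entropy inequality, extract compactness, pass to the limit), but it contains a genuine gap precisely in the compactness and limit-passage steps, which are where the real work lies. Two specific problems.

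First, the strong $L^1$ convergence via Aubin--Lions is not available from the estimates you list. The bounds you obtain are (a) $\rho_N^\varepsilon\in L\log L$ uniformly and (b) the modulated Fisher information $\sum_i\int\rho_N^\varepsilon|\nabla_{x_i}\log(\rho_N^\varepsilon/G_{N,\varepsilon})|^2\le C$. Item (a) gives only weak $L^1$ compactness; de la Vall\'ee Poussin does not produce strong compactness. Item (b) is a bound on $G_{N,\varepsilon}|\nabla X_\varepsilon|^2/X_\varepsilon$ with $X_\varepsilon=\rho_N^\varepsilon/G_{N,\varepsilon}$; because $G_{N,\varepsilon}$ becomes singular on the diagonal as $\varepsilon\to 0$, this does not translate into any $H^1$ or $W^{1,1}$ estimate on $\rho_N^\varepsilon$ itself --- the decomposition $\nabla\log\rho_N^\varepsilon=\nabla\log(\rho_N^\varepsilon/G_{N,\varepsilon})+\nabla\log G_{N,\varepsilon}$ costs you a factor $\sum_{j\neq i}|\nabla V_\varepsilon(x_i-x_j)|^2\sim|x_i-x_j|^{-2}$, which in $d=2$ is not controlled by the entropy bound. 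Without a uniform spatial compactness estimate, Aubin--Lions cannot be invoked and there is no strong $L^1$ limit. The paper deliberately works only with weak $L^1$ convergence of $\rho_{N,\varepsilon}$, and uses the monotonicity $G_{N,\varepsilon}\uparrow G_N$, the bound $G_{N,\varepsilon}\ge 1$, and a splitting of the domain into $\{G_{N,\varepsilon_0}\le M\}$ and $\{G_{N,\varepsilon_0}\ge M\}$ to identify the weak limit of $X_\varepsilon$.

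Second, your item (ii) asserts equi-integrability of $\rho_N^\varepsilon|x_i-x_j|^{-1}$ from $\rho_N^\varepsilon\in L\log L$ and $\nabla V\in L^p$. This does not follow: the dual of $L\log L$ is $\exp L$, and $1/|x|$ is not in $\exp L$ in $d=2$, so the pairing $\int\rho_N^\varepsilon/|x_i-x_j|$ is not controlled. The uniform bound you actually have from the entropy is $\int\rho_N^\varepsilon\bigl|\log|x_i-x_j|\bigr|\lesssim 1$ (i.e.\ $\int\rho_{N,\varepsilon}\log G_{N,\varepsilon}$ is bounded), which is logarithmically weaker than what you need to pass to the limit in $\nabla V^\varepsilon(x_i-x_j)\,\rho_N^\varepsilon$ in $L^1$. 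The paper avoids this obstruction by never splitting the drift into $\nabla V$ times $\rho_N$; instead it keeps the advection term in the form ${\rm div}\bigl(\rho_{N,\varepsilon}\nabla\log(\rho_{N,\varepsilon}/G_{N,\varepsilon})\bigr)$, and on the set $\{G_{N,\varepsilon_0}\ge M\}$ controls it by Cauchy--Schwarz against the modulated Fisher information together with the equi-integrability bound $\int\rho_{N,\varepsilon}\,\mathbb{I}_{G_{N,\varepsilon}\ge M}\lesssim 1/\log M$. To repair your argument you would need to abandon the split form of the drift, use only weak $L^1$ compactness, and implement the paper's $\varepsilon_0$--$M$ splitting both for identifying the weak limit of $\rho_{N,\varepsilon}/G_{N,\varepsilon}$ and for passing to the limit in the nonlinear advection term.
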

\begin{proof}
We consider the following regularization of $V$
\[
\bar V_\varepsilon = \lambda \log \max(|x|,\varepsilon).
\]
Since $\bar V_\eps$ is now Lipschitz, we trivially have existence of a smooth solution $\rho_{N,\eps}$ to \eqref{liouvilleN} for this interaction kernel (it is a standard, linear advection-diffusion equation). The goal is of course to pass to the limit in $\rho_{N,\eps}$ with two difficulties: Handle the singular interaction terms $\nabla V_\eps(x_i-x_j)\,\rho_{N,\eps}$ and obtain the non-linear entropy bound at the limit.

The first step is to use Prop.~\ref{modulatedsmooth} for this corresponding solution $\rho_{N,\eps}$ (again this is straightforward since $\bar V_\eps$ is Lipschitz). This yields the entropy bound   
\begin{equation}
\begin{split}
&\int_{\Pi^{dN}} \rho_{N, \eps} (t,X^N) \log \bigl(\frac{\rho_{N,\eps} (t,X^N)}{G_{N,\eps} }\bigr) \, dX^N \\
& + \sigma  \sum_{i=1}^N\int_0^t \int_{\Pi^{dN}} 
    \rho_{N,\eps} (s,X^N) \Bigl|\nabla_{x_i} 
       \log \bigl(\frac{\rho_{N,\eps}(s,X^N)}{G_{N,\eps} }\bigr) \Bigr|^2  dX^N ds \\
&
     \le \int_{\Pi^{dN}} \rho^0_N  \log \bigl(\frac{\rho^0_N}{G_{N,\eps}}\bigr) \, dX^N. \label{entropycontroleps} 
\end{split}
\end{equation}
The next step is to use our large deviation estimates, namely Prop.~\ref{prop3.1}: Since $\lambda<2 d \sigma$, we have  that for some constant $C$ independent of $\eps$
   \begin{equation}
\int_{\Pi^{dN}} \rho_{N,\eps}\log G_{N,\eps}\leq C,\quad \int_{\Pi^{dN}} \rho_{N,\eps}\log \rho_{N,\eps}\leq C.\label{equi-integrability}
\end{equation}
This implies that $\rho_{N,\eps}$ is equi-integrable in $X^N$ and we may now extract a converging subsequence (still denoted by $\rho_{N,\eps}$) s.t. $\rho_{N,\eps}\to \rho_N$ weakly in $L^\infty([0,\ T],\ L^1(\Pi^{dN}))$.

Next we start to use the specific structure of $V_\eps$. Since $V\leq 0$,  $G_{N,\eps}$ is increasing in $\eps$ (when $\eps \to 0$). Moreover $G_{N,\eps}\geq 1$ so in particular $1/G_{N,\eps}$ is bounded in $L^\infty$ by $1$ and converges pointwise to $1/G_N$. It hence converges {\em strongly} in every space between $L^1$ and $L^\infty$ strictly.
Let us  now denote
   \[
X_\eps=\frac{\rho_{N,\eps}}{G_{N,\eps}}.
   \]
   We next observe that $X_\eps$ converges weakly to $X=\rho_N/G_N$ as $\eps\to 0$. This is a consequence of the equi-integrability of $\rho_{N,\eps}$ given by \eqref{equi-integrability} and the above strong convergence of $G_{N,\eps}$. To be more specific, fix any $\eps_0$, then for $\eps\leq \eps_0$, and any smooth test function $\phi$, 
   \[
\int \frac{\rho_{N,\eps}}{G_{N,\eps}}\,\phi=\int_{G_{N,\eps_0}\leq M} \frac{\rho_{N,\eps}}{G_{N,\eps}}\,\phi+\int_{G_{N,\eps_0}\geq M} \frac{\rho_{N,\eps}}{G_{N,\eps}}\,\phi,
\]
where we choose any $M$ s.t. $M< \exp(\frac{1}{N\,\sigma}\,\log \frac{1}{\eps_0})$. 
   As $G_{N,\eps}\geq 1$, we have that
   \[
\int_{G_{N,\eps_0}\geq M} \frac{\rho_{N,\eps}}{G_{N,\eps}}\,\phi\leq \int_{G_{N,\eps_0}\geq M} \rho_{N,\eps}\,|\phi|\leq C\,\frac{\|\phi\|_{L^\infty}}{\log M} 
   \]
  by the first point of \eqref{equi-integrability} since $\{G_{N,\eps_0}\geq M\}\subset \{G_{N,\eps}\geq M\}$. By doing the same estimate at the limit, we have that
   \[\begin{split}
&\left|\int \phi\left(\frac{\rho_{N,\eps}}{G_{N,\eps}}-\frac{\rho_{N}}{G_{N}}\right)\right|\leq C\,\frac{\|\phi\|_{L^\infty}}{\log M}+\left|\int_{G_{N,\eps_0}\leq M} \phi\left(\frac{\rho_{N,\eps}}{G_{N,\eps}}-\frac{\rho_{N}}{G_{N}}\right)\right|.
   \end{split}
   \]
   But now we note that $G_{N,\eps}$ is in fact uniformly smooth in $\eps$ on ${G_{N,\eps_0}\leq M}$. Indeed denoting by $\delta=\min_{i\neq j} |x_i-x_j|$, we have the trivial bound $G_{N,\eps_0}\geq \exp(-\frac{1}{N\,\sigma}\,\log \max(\delta,\eps_0))$. We recall that $M< \exp(\frac{1}{N\,\sigma}\,\log \frac{1}{\eps_0})$ which implies that $\exp(\frac{1}{N\,\sigma}\,\log \frac{1}{\delta})\leq M$ thus providing an explicit control on $\delta$ in terms of $M$. Hence on ${G_{N,\eps_0}\leq M}$, $G_{N,\eps}$ convergence smoothly to $G_{\eps}$. By the weak convergence of $\rho_{N,\eps}$, we have that
   \[
\limsup_{\eps\to0}\left|\int \phi\left(\frac{\rho_{N,\eps}}{G_{N,\eps}}-\frac{\rho_{N}}{G_{N}}\right)\right|\leq C\,\frac{\|\phi\|_{L^\infty}}{\log M}. 
   \]
   Taking now $\eps_0\to 0$ and hence $M\to\infty$, we conclude that $X_\eps\to X$.
   
   \medskip

   Now the entropy inequality \eqref{entropycontrol} at $\eps$ implies that for any $\eps_0\geq \eps$
   \begin{equation}
     \begin{split}
\int_{\Pi^{dN}} \rho_{N,\eps}\,\log \frac{\rho_{N,\eps}}{G_{N,\eps}}+\sigma \int_0^t\int_{\Pi^{dN}} \frac{|\nabla X_\eps|^2}{X_\eps}\,G_{N,\eps_0}\leq \int_{\Pi^{dN}} \rho_N^0\,\log \frac{\rho_N^0}{G_{N,\eps}},
\end{split}\label{entropycontrolmod}
   \end{equation}
   again because $G_{N,\eps_0}\leq G_{N}$.
   
Convexity and the large deviation estimates from Prop.~\ref{prop3.1} show that
\[
\int_{\Pi^{dN}} \rho_{N}\,\log \frac{\rho_{N}}{G_{N}}\leq \liminf_{\eps\to 0}
 \int_{\Pi^{dN}} \rho_{N,\eps}\,\log \frac{\rho_{N,\eps}}{G_{N,\eps}}.
\]
Since $G_{N,\eps}$ is increasing in $\eps$, we also have directly that
\[
\int_{\Pi^{dN}} \rho_{N}^0\,\log \frac{\rho_{N}^0}{G_{N,\eps}}\leq \int_{\Pi^{dN}} \rho_{N}^0\,\log \frac{\rho_{N}^0}{G_{N}}.
\]
It only remains to treat the dissipation term is in \eqref{entropycontrolmod}. Since $\eps_0$ is fixed then $G_{N,\eps_0}$ is now smooth, $\nabla X_\eps$ and $X_\eps$ converge both in the sense of distribution and, by convexity, we have that
\[
\int_0^t\int_{\Pi^{dN}} \frac{|\nabla X|^2}{X}\,G_{N,\eps_0}\leq \liminf_{\eps\to 0} \int_0^t\int_{\Pi^{dN}} \frac{|\nabla X_\eps|^2}{X_\eps}\,G_{N,\eps_0}.
\]
This gives that for any $\eps_0$,
\[
\int_{\Pi^{dN}} \rho_{N}\,\log \frac{\rho_{N}}{G_{N}}+\sigma \int_0^t\int_{\Pi^{dN}} \frac{|\nabla X|^2}{X}\,G_{N,\eps_0}\leq \int_{\Pi^{dN}} \rho_N^0\,\log \frac{\rho_N^0}{G_{N}}.
\]
We recall again that $G_{N,\eps}$ is increasing in $\eps$ so, by the monotone convergence theorem, this yields the desired entropy bound
\[
\int_{\Pi^{dN}} \rho_{N}\,\log \frac{\rho_{N}}{G_{N}}+\sigma \int_0^t\int_{\Pi^{dN}} \frac{|\nabla X|^2}{X}\,G_{N}\leq \int_{\Pi^{dN}} \rho_N^0\,\log \frac{\rho_N^0}{G_{N}}.
\]
\medskip

The last question is to show that Eq.~\eqref{liouvilleN} is satisfied in the sense of distribution at the limit.  The only difficulty is to pass to the 
the limit in the advection term, which we may rewrite in a non-linear form as
$${\rm div} \Big(\rho_{N,\varepsilon} \nabla \log \frac{\rho_{N,\varepsilon}}{G_{N,\varepsilon}}\Big) 
$$
To do so
let us introduce the following quantity
$$I= \int_{\Pi^{dN}} \varphi \rho_{N,\varepsilon} (\nabla \log G_{N,\varepsilon}  - \nabla \log \rho_{N,\varepsilon}) 
$$
where $\varphi$ is a ${\mathcal C}^\infty$ test function.
Let us define a cut-off function $\chi$ such that $\chi(|x|) = 1$ is $|x|\le 1$ and $\chi(|x|)= 0$
if $|x|>2$ and choose $\varepsilon$ and $M$ such that again
$$2\,M < \exp \bigl(\frac{C}{N} \log 1/\varepsilon_0 \bigr)$$
 then
\begin{eqnarray}
 I  = 
 &&  \int_{\Pi^{dN}} \varphi \rho_{N,\varepsilon} (\nabla \log G_{N,\varepsilon}  - \nabla \log \rho_{N,\varepsilon}) 
      \chi \Big( \frac{G_{N,\varepsilon_0}}{M} \Big)  \nonumber \\
  &&  +  \int_{\Pi^{dN}} \varphi \rho_{N,\varepsilon} (\nabla \log G_{N,\varepsilon}  - \nabla \log \rho_{N,\varepsilon}) 
     \Big(1- \chi \Big( \frac{G_{N,\varepsilon_0}}{M} \Big)\Big) \nonumber  \\
 = &&  I_{1,\varepsilon} + I_{2,\varepsilon}. \nonumber 
\end{eqnarray}
We now have as before the convergence of $I_{1,\eps}$ to
\[
I_1=\int_{\Pi^{dN}} \varphi \rho_{N} (\nabla \log G_{N}  - \nabla \log \rho_{N}) 
      \chi \Big( \frac{G_{N,\varepsilon_0}}{M} \Big).
      \]
      More precisely $I_{1,\varepsilon}$ converges to $I_1$ since as before $G_{\varepsilon_0,M} \le 2 M$  provides a uniform lower bound on $\inf_{i\neq j} |x_i-x_j|$.

Concerning $I_{2,\varepsilon}$, we remark that 
\begin{eqnarray}
 |I_{2,\varepsilon}|
 && \le
 \int_{G_{N,\varepsilon_0} \ge M} \rho_{N,\varepsilon} \bigl|\nabla \log G_{N,\varepsilon}
     - \nabla \log \rho_{N,\varepsilon} \bigr|    \nonumber \\   
&&   \le  \Bigl(\int_{\Pi^{dN}} \rho_{N,\varepsilon} \bigl|\nabla \log \frac{G_{N,\varepsilon}}{\rho_{N,\varepsilon}}|^2
    \Bigr)^{1/2} \Bigl( \int_{\Pi^{dN}} \rho_{N,\varepsilon} \, \mathbb{I}_{G_{N,\varepsilon} \ge M} \Bigr)^{1/2}  \nonumber \\
&& \le C  \frac{1}{(\log M)^{1/2}} \Bigl( \int_{\Pi^{dN}} \rho_{N,\varepsilon} \log \rho_{N,\varepsilon}  \Bigr)^{1/2}. \nonumber\\ 
&& \le \frac{C}{(\log M)^{1/2}},
\end{eqnarray}
with $C$ independent on $\varepsilon$. Now letting $\eps_0\to 0$ and hence $M\to \infty$, we conclude that
\[
\int_{\Pi^{dN}} \varphi \rho_{N,\varepsilon} \big(\nabla \log G_{N,\varepsilon}  - \nabla \log \rho_{N,\varepsilon}\big)\to \int_{\Pi^{dN}} \varphi \rho_{N} \big(\nabla \log G_{N}  - \nabla \log \rho_{N}\big),
\]
and hence that $\rho_N$ solves \eqref{liouvilleN} in the sense of distribution.
\end{proof}

%%%%%%%%%%%% CONCLUSION
\section*{Conclusion}
We have been able to derive for the first time the mean field limit for attractive singular interaction of gradient flow type. Our approach relies critically on the use of the right physics through the structure of the free energy of the system which allows to combine the two methods in \cite{JaWa2}  and \cite{Se}. An important application is the answer to the longstanding open problem of the full rigorous derivation with quantitative estimates of the Patlak--Keller-Segel model in the optimal subcritical regime.

%The crucial observation is that when combining the relative entropy and the modulated energy in an appropriate manner and differentiating in time, bad quantities in previous calculations cancel allowing to obtain the important identity \eqref{ModulatedFreeEnergy}.

Controlling the time evolution of our modulated free energy leads to the development of new large deviation estimates that encode the competition between diffusion and attraction or concentration of the particles. Those large deviation estimates only require simple one-sided bound on the potential near its singularity at $0$ without strong structural assumptions.

By adding to the proofs in the present paper for attractive kernels the estimates in the proceeding \cite{BrJaWa1} for repulsive kernels, our method provides quantitative mean field estimates for a large class of attractive-repulsive interactions. Furthermore, our approach is compatible with vanishing or degenerate diffusion systems where the diffusion coefficient vanishes as the number of particles increases and which are especially relevant for some Coulomb gases related to the complex Ginibre Ensemble in random matrix theory, see for example \cite{BolChaFon}.

\medskip

Finally, we wish to highlight the following open questions for which this new method could be helpful:
\begin{itemize}
\item It is unclear what the optimal rate of convergence should be in Theorem~\ref{Main} and additional work is still needed. In particular the proof in section~\ref{LDE} is somewhat careless in that regard for the sake of simplicity. In particular the use of different regularizations in Prop. \ref{largedeviationbasic}, Prop.~\ref{largedeviation} and the proof of Prop.~\ref{prop3.1} likely leads to an artificially lower rate. However this proof still suggests that a polynomial rate in $N$ cannot be uniformly maintained as the potential $V$ approaches the critical case $\lambda\,\log |x|$ with $\lambda=2\,d\,\sigma$. 
\item Is it possible to obtain uniform in time convergence or in general to work on the infinite time interval $[0,+\infty)$? This would both provide the mean field limit and the  large time asymptotics of the dynamics. In that regards, we point out that our proof does not use the dissipation term in the time evolution of the modulated free energy  in Inequality \eqref{ModulatedFreeEnergy}. Of course this dissipation term is an equivalent of a weighted and modulated Fisher information and it is technically challenging to use because it involves the singular Gibbs equilibrium. But a good entropy-entropy dissipation estimate for this term could lead to uniform in time estimates.
\item Does the mean field limit hold in the supercritical cases? In that case the limiting Patlak-Keller-Segel system blows-up in finite time but it would not be unreasonable to conjecture that the limit holds on the time interval before the blow-up. There are again significant technical issues (including at the level of the existence of our entropy solutions) but we believe that it is possible to develop localized relative entropies that would allow such a result.
\item Can our method provide some insights for the existence of strong stochastic solutions to the trajectorial many-particle system~\eqref{sys}? The entropy solution that we derive in the appendix is so far the only example of some sort of existence outside of the diffusion dominant result $\lambda<\sigma$  as in \cite{FoJo}. Such entropy solutions rely on a (simplified) version of the large deviation inequalities which could provide further insights into whether particles can actually collide and how they collide.
    \item Can the modulated free energy be extended to different type of interactions than gradient flows such as Hamiltonian systems? Another example is given by interactions between particles that solve evolution in time equation, with in particular the Patlak--Keller--Segel parabolic-parabolic equations.
\end{itemize}
%%%%%%%%ACKNOWLEDGEMENTS
\section*{Acknowledgements}
The first two  authors want to thank F. Golse and L. Saint-Raymond for remarks on the note published in the C.R. Acad Sciences Section Math and on the paper https://slsedp.centre-mersenne.org/journals/SLSEDP/  published  by the mersenne foundation. All authors want to thank  S. Serfaty for sharing many insights on her results and for the description of our method in her accepted  paper in {\it Duke J. Math} (2020).  D. Bresch is partially supported by SingFlows project, grant ANR-18-CE40-0027. P.--E. Jabin is partially supported by NSF DMS Grant 161453, 1908739, 2049020. Z. Wang is partially supported by the start-up fund from BICMR, Peking University.

%\nocite{*}

%%%%%%%%%%%%%%%% BIBLIO
 
\end{document}